\newcommand{\mat}[1]{\begin{bmatrix} #1 \end{bmatrix}}
\DeclareMathOperator{\diag}{diag}
\DeclareMathOperator{\hjb}{HJB}
\newtheoremstyle{ieeeconf}
{0pt}   
{0pt}   
{\normalfont}  
{\parindent}       
{\itshape} 
{:}         
{ } 
{\thmname{#1} \thmnumber{#2}\thmnote{ (#3)}} 
\renewenvironment{proof}[1][\proofname]{\par
	\pushQED{\qed}%
	\normalfont \topsep\z@
	\trivlist
	\item[\hskip\labelsep\indent
	\itshape
	#1\@addpunct{:}]\ignorespaces
}{%
	\popQED\endtrivlist\@endpefalse
}
\theoremstyle{ieeeconf}
\newtheorem{definition}{Definition}
\newtheorem{assumption}{Assumption}
\newtheorem{problem}{Problem}
\newtheorem{lemma}{Lemma}
\newtheorem{theorem}{Theorem}
\newtheorem{remark}{Remark}
	\newcommand\td[1]{\textcolor{red}{\textbf{TODO:} #1}} 
	\newcommand\tdd[1]{} 
	\newcommand\comment[1]{\textcolor{blue}{\textbf{Comment:} #1}} 
	\newcommand\commentd[1]{} 
	\newcommand\frage[1]{\textcolor{orange}{\textbf{Rückfrage:} #1}} 
	\newcommand\fraged[1]{}
	\newcommand\td[1]{}
	\newcommand\tdd[1]{} 
	\newcommand\comment[1]{} 
	\newcommand\commentd[1]{} 
	\newcommand\frage[1]{}
	\newcommand\fraged[1]{}
\def\BibTeX{{\rm B\kern-.05em{\sc i\kern-.025em b}\kern-.08em
    T\kern-.1667em\lower.7ex\hbox{E}\kern-.125emX}}
\begin{document}
	
\title{Offline and Online Nonlinear Inverse Differential Games with Known and Approximated Cost and Value Function Structures}
\author{Philipp Karg, Balint Varga, and Sören Hohmann
	\thanks{All authors are with the Institute of Control Systems (IRS),
		Karlsruhe Institute of Technology (KIT), 76131 Karlsruhe, Germany. Corresponding author is Philipp Karg, {\tt\small philipp.karg@kit.edu}.}
	\thanks{This work was funded by the Deutsche Forschungsgemeinschaft (DFG, German Research Foundation) - 421428832.}}

\markboth{IEEE Transactions on Cybernetics,~Vol.~XX, No.~XX, XXXX}
{Karg \MakeLowercase{et al.}: Offline and Online Nonlinear Inverse Differential Games}

\maketitle
\thispagestyle{fancy}
\pagestyle{fancy}

\fancyhf{}
\fancyhead[CO,CE]{\copyright 2024 IEEE. This paper has been submitted for possible publication in the IEEE Transactions on Cybernetics.}

\begin{abstract}
	In this work, we propose novel offline and online Inverse Differential Game (IDG) methods for nonlinear Differential Games (DG), which identify the cost functions\comment{cost function = running cost function in this paper; integral cost function(al) = value function(al) for arbitrary policy combination with starting time $t_0$} of all players from control and state trajectories constituting a feedback Nash equilibrium. The offline approach computes the sets of all equivalent cost function parameters that yield the observed trajectories. 
	Our online method is guaranteed to converge to cost function parameters of the offline calculated sets. For both methods, we additionally analyze the case where the cost and value functions are not given by known parameterized structures and approximation structures, like polynomial basis functions, need to be chosen. Here, we found that for guaranteeing a bounded error between the trajectories resulting from the offline and online IDG solutions and the observed trajectories an appropriate selection of the cost function structures is required. They must be aligned to assumed value function structures such that the coupled Hamilton-Jacobi-Bellman equations can be fulfilled. Finally, the theoretical results and the effectiveness of our new methods are illustrated with a numerical example.
\end{abstract}

\begin{IEEEkeywords}
Inverse Differential Games, Inverse Optimal Control, Differential Games, Optimal Control
\end{IEEEkeywords}

\section{Introduction} \label{sec:introduction}
\tdd{Kürzungspotential (1): Introduction und Abstract}
Inverse Optimal Control (IOC) methods have gained significant research interest in the last years. Despite the early works of Kalman \cite{Kalman.1964} and coworkers where IOC deals with the question when an arbitrary control law is an optimal one, all current methods can be understood as data-based approaches. They recover from observed state and control trajectories (so-called Ground Truth (GT)), which are assumed to constitute an Optimal Control (OC) solution, the underlying cost functions\footnote{Throughout the paper, 'cost function' refers to the so-called running cost function, i.e. the integrand of the value function to be minimized in the OC problem.}. The identification of unknown agents, like humans \cite{Mombaur.2010}, is an application example of such IOC methods. Inverse Differential Games (IDG) represent the generalization of IOC to multi-agent systems, where $N$ players optimize their individual value function and mutually influence a dynamic system. IDG methods recover the cost functions of all players from the state and the control trajectories of all players, which are assumed to constitute a solution of the Differential Game (DG), e.g. a Nash equilibrium. 
An application example is the analysis of the interaction of unknown agents, like the collision avoidance behavior of humans \cite{Karg.2024}.

IDG methods can be categorized in direct and indirect approaches \cite{Molloy.2022}. Direct IDG methods minimize an error measure between the GT trajectories and the trajectories optimal w.r.t. a current cost function guess. Since for every evaluation of this error measure a DG solution needs to be calculated, such direct approaches yield high and poorly scaling computation times. Therefore, indirect methods were developed, which minimize the violation of optimality conditions that are fulfilled with the searched cost function parameters and the observed GT trajectories. For example, these indirect IDG methods can be derived from the Minimum Principle (MP) (see e.g. \cite{Molloy.2022}) or the coupled Hamilton-Jacobi-Bellman (HJB) equations (see e.g. \cite{Inga.2019b,Lian.2022,Martirosyan.2024}). In this paper, we focus on the latter methods since they promise advantages like the formulation of online learning methods and the analysis of approximation errors, which is crucial in practice when the cost or value functions are not given by known parameterized structures and approximation structures, like polynomial basis functions or neural networks, are used to approximate the true functions. In this case, approximation errors remain and the question whether a bounded error between the trajectories resulting from the IDG solution and the GT trajectories can be guaranteed arises.

The majority of works related to HJB-based IDG methods deal only with the single player, i.e. IOC, problem (see e.g. \cite{Pauwels.2014,Kamalapurkar.2018,Self.2021,Self.2022,Town.2023,Lian.2024b,Wu.2024}). They mainly propose online learning methods for nonlinear\comment{not in \cite{Self.2021,Town.2023}} OC settings with proofed convergence behavior\comment{not for final cost function parameters in \cite{Wu.2024}} and with an analysis of erroneous approximations of the value and cost function\comment{not in \cite{Wu.2024}}. However, this analysis is not separately done for value and cost function to distinguish their influences.
Furthermore, in all mentioned references the well-known non-uniqueness of IOC solutions is only insufficiently addressed. Indeed, in \cite{Kamalapurkar.2018,Self.2021,Self.2022}, excitation conditions are assumed such that the IOC solution is unique up to a scaling factor which is however often not the case. Notable exception is the approach in \cite{Town.2023} which however deals only with a linear-quadratic (LQ) OC problem.
Regarding multi-player HJB-based IDG methods (see e.g. \cite{Inga.2019b,Lian.2022,Martirosyan.2024,Inga.2021,Lian.2024,Lin.2024,Wu.2024b}), the currently available offline methods \cite{Inga.2019b,Lian.2022,Martirosyan.2024} are derived for LQ DGs \cite{Inga.2019b,Martirosyan.2024} or insufficiently address the non-uniqueness of IDG solutions. The preferable goal would be the computation of a so-called solution set, i.e. the set of all equivalent cost function parameters that yield the observed GT trajectories. However, in \cite{Inga.2019b,Lian.2022,Martirosyan.2024}, only one element of this set is calculated. In addition, none of the mentioned offline methods provides an approximation error analysis. Multi-player HJB-based IDG methods that learn the cost functions of the players online \cite{Inga.2021,Lian.2024,Lin.2024,Wu.2024b} are all restricted to LQ DGs and again, lack a formal treatment of the non-uniqueness problem. 

Overall, there is no offline IDG method based on the coupled HJB equations that computes solution sets to analyze the non-uniqueness of IDG solutions beyond the scaling ambiguity. In addition, there is no online IDG method for nonlinear DGs\comment{based on the coupled HJB equations but also in general} with proofed convergence behavior and proofed connection of the converged parameters to the solution set of the offline approach. Both research gaps are closed in this paper by our new offline and online IDG methods presented in Sections~\ref{sec:offlineIDGHJB} and \ref{sec:onlineIDGHJB}, respectively. Moreover, for our new offline and online algorithms, we provide the first separate analysis of erroneous value and cost function approximations.

\section{Problem Formulation} \label{sec:problem}
Let the nonlinear, input-affine and time-invariant system dynamics be given by
\begin{align} \label{eq:x_dot}
	\dot{\bm{x}} = \bm{f}(\bm{x}) + \sum_{i=1}^{N} \bm{G}_i(\bm{x}) \bm{u}_i,
\end{align}
where $\bm{x} \in \mathbb{R}^n$ denotes the system state and $\bm{u}_i \in \mathbb{R}^{p_i}$ the control variable of player $i \in \mathcal{N} = \{1, \dots, N\}$, $N\in \mathbb{N}_{\geq 1}$. The system functions $\bm{f}: \mathbb{R}^n \rightarrow \mathbb{R}^n$ (with $\bm{f}(\bm{0})=\bm{0}$) and $\bm{G}_i: \mathbb{R}^n \rightarrow \mathbb{R}^{n \times p_i}$ are continuously differentiable on a neighborhood $\mathcal{X} \subset \mathbb{R}^n$ of the origin. Each player $i \in \mathcal{N}$ chooses an admissible feedback strategy $\bm{u}_i = \bm{\mu}_i(\bm{x}) \in \Gamma_i(\mathcal{X})$, where admissibility (see e.g. \cite[Definition~1]{Vamvoudakis.2011}) implies that $\bm{\mu}_i: \mathbb{R}^n \rightarrow \mathbb{R}^{p_i}$ (with $\bm{\mu}_i(\bm{0})=\bm{0}$) is continuously differentiable on $\mathcal{X}$ and that $\bm{\mu} = \mat{\bm{\mu}_1^\top & \dots & \bm{\mu}_N^\top}^\top$ ($\bm{\mu}: \mathbb{R}^n \rightarrow \mathbb{R}^p$, $p = \sum_{i=1}^{N} p_i$) stabilizes \eqref{eq:x_dot} on $\mathcal{X}$ and leads $\forall t\geq t_0$ and all possible initial states $\bm{x}(t_0) \in \mathcal{X}$ to a finite value function
\begin{align} \label{eq:V_i}
	V^{\bm{\mu}}_i(\bm{x}) = \int_{t}^{\infty} Q^*_i(\bm{x}) + \sum_{j=1}^{N} \bm{\mu}_j^\top \bm{R}^*_{ij} \bm{\mu}_j \text{d}\tau,
\end{align}
which is twice continuously differentiable on $\mathcal{X}$. In \eqref{eq:V_i}, $\bm{R}^*_{ij} \succeq \bm{0}, \forall i,j \in \mathcal{N}, i \neq j$, $\bm{R}^*_{ii} \succ \bm{0}$ and $Q^*_i: \mathbb{R}^n \rightarrow \mathbb{R}$ is a positive definite \cite[p.~53]{Narendra.2005}, continuously differentiable function. With \eqref{eq:x_dot}, \eqref{eq:V_i} and the strategy sets $\Gamma_i$ an $N$-player nonzero-sum DG is defined. We consider a noncooperative game setting, i.e. each player $i$ aims at choosing its strategy $\bm{\mu}_i$ such that \eqref{eq:V_i} is minimized without binding agreements with the other players. Thus, we assume that there exists a unique\comment{in general, uniqueness not necessary for trustworthiness of residual error (see CCTA paper); if not unique, trustworthiness, i.e. same FNE strategy as measured one when solving DG with identified parameters, depends on initial FNE guess; however, one suitable FNE strategy should exist}\comment{unique = unique set of value functions and thus, unique set of feedback strategies} feedback Nash equilibrium $\{\bm{\mu}^*_i \in \Gamma_i(\mathcal{X}): i \in \mathcal{N}\}$ according to Definition~\ref{def:FNE} the players seek. 
\begin{definition}[cf. {\cite[Definition~3.12]{Basar.1999}}] \label{def:FNE}
	An $N$-tuple of feedback strategies $\{\bm{\mu}^*_i \in \Gamma_i(\mathcal{X}): i \in \mathcal{N}\}$ is called feedback Nash equilibrium (FNE), if $V_i^* = V_i^{\bm{\mu}^*} \leq V_i^{\bm{\mu}^*_{-i}}, \forall i \in \mathcal{N}$ where $\bm{\mu}^* = \mat{{\bm{\mu}_1^*}^\top & \dots & {\bm{\mu}_N^*}^\top}^\top$ and $\bm{\mu}^*_{-i} = \mat{{\bm{\mu}_1^*}^\top & \dots & {\bm{\mu}_i}^\top & \dots & {\bm{\mu}_N^*}^\top}^\top$.
\end{definition}
With the made assumptions on \eqref{eq:x_dot}, \eqref{eq:V_i} and $\Gamma_i$, a necessary and sufficient condition (see \cite[Theorem~6.16]{Basar.1999} for the reduction to the single-player case under the FNE solution concept and \cite[p.~18]{Anderson.1989} for the single-player, i.e. OC, conditions) for the FNE is given by the coupled HJB equations 
\begin{align} \label{eq:hjb}
	0 &= \pdv{V_i^*}{\bm{x}} \left( \bm{f} + \sum_{j=1}^{N} \bm{G}_j \bm{\mu}_j^* \right) + \sum_{j=1}^{N} {\bm{\mu}_j^*}^\top \bm{R}^*_{ij} {\bm{\mu}_j^*} + Q^*_i(\bm{x}),
\end{align}
$\forall i \in \mathcal{N}$, where
\begin{align} \label{eq:mu_i}
	\bm{\mu}_i^*(\bm{x}) = - \frac{1}{2} {\bm{R}^*_{ii}}^{-1} {\bm{G}_i(\bm{x})}^\top \left( \pdv{V_i^*}{\bm{x}} \right)^\top.
\end{align}
The so-called forward problem, i.e. computing the FNE strategies $\{\bm{\mu}^*_i \in \Gamma_i(\mathcal{X}): i \in \mathcal{N}\}$ for given cost \eqref{eq:V_i} and system functions \eqref{eq:x_dot}, is typically solved by a policy iteration (PI) algorithm (see e.g. \cite[Algorithm~1]{Vamvoudakis.2011} or \cite{Karg.2023}). Here, the value functions $V_i^{\bm{\mu}}, \forall i \in \mathcal{N}$ corresponding to a strategy combination $\bm{\mu}$ are calculated in a policy evaluation step by solving \eqref{eq:hjb} with $\bm{\mu}$ for $V_i^{\bm{\mu}}, \forall i \in \mathcal{N}$. Then, a policy improvement with the achieved value functions follows according to \eqref{eq:mu_i}. This procedure is repeated iteratively until convergence to the FNE, i.e. $V_i^{\bm{\mu}} \rightarrow V_i^*, \forall i \in \mathcal{N}$ and $\bm{\mu} \rightarrow \bm{\mu}^*$ (see e.g. \cite[Theorem~1]{Liu.2014}).
\comment{PI algorithm constructs a unique sequence of value functions (solutions of Lyapunov equations) that converges to the value functions of the FNE; since we assume a unique FNE, we have convergence to this unique FNE (implicitly also assumed by typical ADP approaches); furthermore, if the basis functions for the value functions are chosen appropriately, one can show that for the sequence of Lyapunov equations a sequence of unique LS solutions for these parameterized value functions exists and thus, we have a unique sequence of value functions to the value functions of the unique FNE and a unique sequence of corresponding parameters of the parameterized value function structures to the parameters of these structures corresponding to the unique FNE}

As motivated in the introduction, we aim at solving the inverse problem of the DG defined by \eqref{eq:x_dot}, \eqref{eq:V_i} and $\Gamma_i$ offline and online as defined by Problem~\ref{problem:offline_idg} and \ref{problem:online_idg}.
\begin{assumption} \label{assm:Q_i}
	The state-dependent cost functions $Q^*_i(\bm{x})$, $\forall i \in \mathcal{N}$ are given by $Q^*_i(\bm{x}) = {\bm{\beta}^*_i}^{\top} \bm{\psi}_i(\bm{x})$, where $\bm{\psi}_i: \mathbb{R}^n \rightarrow \mathbb{R}^{m_i}$ is continuously differentiable on $\mathcal{X}$ and $\bm{\beta}^*_i \in \mathbb{R}^{m_i}$.
\end{assumption}

\begin{assumption} \label{assm:R_ij}
	The matrices $\bm{R}^*_{ij}$, $\forall i,j \in \mathcal{N}$ are diagonal.
\end{assumption}

\begin{assumption} \label{assm:V_i_star}
	The value functions $V_i^*(\bm{x})$, $\forall i \in \mathcal{N}$ corresponding to the FNE $\{\bm{\mu}^*_i \in \Gamma_i(\mathcal{X}): i \in \mathcal{N}\}$ are given by $V_i^*(\bm{x}) = {\bm{\theta}_i^*}^{\top} \bm{\phi}_i(\bm{x})$, where $\bm{\phi}_i: \mathbb{R}^n \rightarrow \mathbb{R}^{h_i}$ is twice continuously differentiable on $\mathcal{X}$ and $\bm{\theta}_i^* \in \mathbb{R}^{h_i}$.
\end{assumption}
\comment{these are the "parameterized cost and value function structures" defined in abstract and introduction}
\comment{this does not assume that all LE solutions can be approximated exactly with this linear combination (only relevant for the forward solution in the verification step); however, here a "sufficient" approximation is enough to ensure convergence of the PI algorithm and admissibility of the intermediate FNEs (implementation of PI algorithm needs to detect convergence to a residual set or implementation via grid-based LE evaluation and QP is used)}

\begin{assumption} \label{assm:systemDynamics_basisFunctions_known}
	The system dynamics functions $\bm{f}(\bm{x})$ and $\bm{G}_i(\bm{x}), \forall i \in \mathcal{N}$ as well as the basis functions $\bm{\psi}_i(\bm{x}), \forall i \in \mathcal{N}$ and $\bm{\phi}_i(\bm{x}), \forall i \in \mathcal{N}$ are known.
\end{assumption}

\begin{problem}[Offline Inverse Differential Game Problem] \label{problem:offline_idg}
	Let Assumptions~\ref{assm:Q_i}, \ref{assm:R_ij}, \ref{assm:V_i_star} and \ref{assm:systemDynamics_basisFunctions_known} hold. Furthermore, let $D \in \mathbb{N}_{\geq 1}$ demonstrations, i.e. trajectories ${\bm{x}^*}^{(d)}(t_0 \rightarrow \infty)$ and ${\bm{u}^*}^{(d)}(t_0 \rightarrow \infty)$ (GT trajectories), of the FNE $\{\bm{\mu}^*_i \in \Gamma_i(\mathcal{X}): i \in \mathcal{N}\}$ be given. Find (at least) one combination of parameters $\hat{\bm{\beta}}_i$ and $\hat{\bm{R}}_{ij}$, $\forall i,j \in \mathcal{N}$ such that ${\bm{x}^*}^{(d)}(t_0 \rightarrow \infty) = {\hat{\bm{x}}}^{*(d)}(t_0 \rightarrow \infty)$ and ${\bm{u}^*}^{(d)}(t_0 \rightarrow \infty) = {\hat{\bm{u}}}^{*(d)}(t_0 \rightarrow \infty)$, $\forall d \in \{1,\dots,D\}$, where ${\hat{\bm{x}}}^{*(d)}(t_0 \rightarrow \infty)$ and ${\hat{\bm{u}}}^{*(d)}(t_0 \rightarrow \infty)$ are demonstrations of the FNE $\{\hat{\bm{\mu}}_i^* \in \Gamma_i(\mathcal{X}): i \in \mathcal{N}\}$ resulting from $\hat{\bm{\beta}}_i$ and $\hat{\bm{R}}_{ij}$, $\forall i,j \in \mathcal{N}$.
\end{problem}

\begin{problem}[Online Inverse Differential Game Problem] \label{problem:online_idg}
	Let Assumptions~\ref{assm:Q_i}, \ref{assm:R_ij}, \ref{assm:V_i_star} and \ref{assm:systemDynamics_basisFunctions_known} hold. Furthermore, let the $N$ players act in the FNE $\{\bm{\mu}^*_i \in \Gamma_i(\mathcal{X}): i \in \mathcal{N}\}$ and let the corresponding trajectories $\bm{x}^*(t_0 \rightarrow t)$ and $\bm{u}^*(t_0 \rightarrow t)$ (GT trajectories) be given up to the current time $t$. Find (at least) one combination of parameters $\hat{\bm{\beta}}_i$ and $\hat{\bm{R}}_{ij}$, $\forall i,j \in \mathcal{N}$ such that $\bm{x}^*(t_0 \rightarrow \infty) = \hat{\bm{x}}^*(t_0 \rightarrow \infty)$ and $\bm{u}^*(t_0 \rightarrow \infty) = \hat{\bm{u}}^*(t_0 \rightarrow \infty)$, where $\hat{\bm{x}}^*(t_0 \rightarrow \infty)$ and $\hat{\bm{u}}^*(t_0 \rightarrow \infty)$ correspond to the FNE $\{\hat{\bm{\mu}}_i^* \in \Gamma_i(\mathcal{X}): i \in \mathcal{N}\}$ resulting from $\hat{\bm{\beta}}_i$ and $\hat{\bm{R}}_{ij}$, $\forall i,j \in \mathcal{N}$.
\end{problem}


\section{Offline Inverse Differential Game Method Based on Hamilton-Jacobi-Bellman Equations} \label{sec:offlineIDGHJB}
In this section, we propose a new offline IDG method based on the HJB equations~\eqref{eq:hjb} to solve Problem~\ref{problem:offline_idg}. The method is based on two main steps. Firstly, the $D$ demonstrated trajectories ${\bm{x}^*}^{(d)}(t_0 \rightarrow \infty)$ and ${\bm{u}^*}^{(d)}(t_0 \rightarrow \infty)$ are used to identify the FNE strategy \eqref{eq:mu_i} of each player $i$ via a quadratic program (QP) for each player. Through the FNE identification, we can also determine (at least parts of) the value function weights $\bm{\theta}_i^*$. In the second step, the identified FNE strategies $\hat{\bm{\mu}}_i(\bm{x})$ and the identified parts of $V_i^*$ are inserted into the HJB equations~\eqref{eq:hjb}. Since via this procedure the coupled HJB equations are reformulated as $N$ decoupled equations that need to hold for every $\bm{x} \in \mathcal{X}$, we can estimate $\bm{R}^*_{ij}$, $\bm{\beta}^*_i$ and the remaining parts of $\bm{\theta}_i^*$ by evaluating the $N$ decoupled HJB equations at arbitrarily chosen system states and constructing a second QP for each player. 

If Assumptions~\ref{assm:Q_i} and \ref{assm:V_i_star} are fulfilled, i.e. cost and value functions are given by known parameterized structures, Problem~\ref{problem:offline_idg} is guaranteed to be solved. The theoretical details of this case are discussed in Subsection~\ref{subsec:IDGHJB_offline_known_structures}. If the structures of cost and value functions are completely unknown and approximation structures, like polynomial basis functions, are used instead, Assumptions~\ref{assm:Q_i} and \ref{assm:V_i_star} are not fulfilled since approximation errors remain. Hence, in Subsection~\ref{subsec:IDGHJB_offline_approx_valueFct_structures}, we discuss the case of erroneous value function approximations (not fulfilled Assumption~\ref{assm:V_i_star}) which leads to bounded errors between the identified $\hat{\bm{\mu}}_i$ (and $\hat{\bm{\mu}}_i^*$ resulting from the IDG solution) and the GT FNE strategy $\bm{\mu}^*_i$. In Subsection~\ref{subsec:IDGHJB_online_approx_costFct_structures}, approximation errors in the second step of the IDG method are analyzed, i.e. Assumption~\ref{assm:Q_i} is not fulfilled.

\subsection{Known Cost and Value Function Structures} \label{subsec:IDGHJB_offline_known_structures}
In order to identify parts of the the value functions weights $\bm{\theta}_i^*$ alongside the FNE strategies in the first step of our IDG method, we firstly rearrange the basis functions $\bm{\phi}_i(\bm{x})$ according to ${\bm{\phi}_i(\bm{x})}^\top = \mat{ {\bm{\phi}^{(r)}_i(\bm{x})}^\top & {\bm{\phi}^{(-r)}_i(\bm{x})}^\top }$ and ${\bm{\theta}^*_i}^\top = \mat{ {\bm{\theta}^{(r)}_i}^{*\top} & {\bm{\theta}^{(-r)}_i}^{*\top} }$ (${\bm{\theta}^{(r)}_i} \in \mathbb{R}^{\bar{h}_i}$, ${\bm{\theta}^{(-r)}_i} \in \mathbb{R}^{h_i-\bar{h}_i}$) such that $\exists \bm{x} \in \mathcal{X}$ with
\begin{align} \label{eq:non_vanishing_basisFunctions}
	&{\bm{G}_i(\bm{x})}^\top \left( \pdv{\bm{\phi}^{(r)}_{i,j}(\bm{x})}{\bm{x}} \right)^\top \neq \bm{0}, \forall j \in \{1,\dots, \bar{h}_i\}
\end{align}
and such that $\forall \bm{x} \in \mathcal{X}$
\begin{align} \label{eq:vanishing_basisFunctions}
	{\bm{G}_i(\bm{x})}^\top \left( \pdv{\bm{\phi}^{(-r)}_{i,j}(\bm{x})}{\bm{x}} \right)^\top = \bm{0}, \forall j \in \{1,\dots, h_i-\bar{h}_i\}.
\end{align}

With this consideration and Assumption~\ref{assm:V_i_star}, we get 
\begin{align}
	\bm{\mu}_i^*(\bm{x}) &= - \frac{1}{2} {\bm{R}^*_{ii}}^{-1} \underbrace{{\bm{G}_i(\bm{x})}^\top \left( \pdv{\bm{\phi}^{(r)}_i(\bm{x})}{\bm{x}} \right)^\top}_{=\bm{\Phi}_{i}^{(r)}(\bm{x})} {\bm{\theta}^{(r)}_{i}}^* \notag \\
	&\hphantom{=} - \frac{1}{2} {\bm{R}^*_{ii}}^{-1} \underbrace{{\bm{G}_i(\bm{x})}^\top \left( \pdv{\bm{\phi}^{(-r)}_i(\bm{x})}{\bm{x}} \right)^\top}_{=\bm{0}} {\bm{\theta}^{(-r)}_i}^* \label{eq:mu_i_red}
\end{align}
from \eqref{eq:mu_i} and conclude that the parameters ${\bm{\theta}_i^{(-r)}}^*$ cannot be identified from the FNE strategies. In \eqref{eq:mu_i_red}, $\bm{\Phi}^{(r)}_i: \mathbb{R}^n \rightarrow \mathbb{R}^{p_i \times \bar{h}_i}$ is continuously differentiable and ${\bm{\Phi}^{(r)}_i(\bm{x})}^\top = \mat{\bar{\bm{\phi}}^{(r)}_{i,1}(\bm{x}) & \dots & \bar{\bm{\phi}}^{(r)}_{i,p_i}(\bm{x})}$. Now, Lemma~\ref{lemma:linear_combination_mu_i} states the final reformulation of the FNE strategy~\eqref{eq:mu_i}.
\begin{lemma} \label{lemma:linear_combination_mu_i}
	If Assumptions~\ref{assm:R_ij} and \ref{assm:V_i_star} hold, \eqref{eq:mu_i} can be written as 
	\begin{align} \label{eq:mu_i_linear}
		\bm{\mu}_i^*(\bm{x}) = -\frac{1}{2} \bar{\bm{\Phi}}_i(\bm{x}) \bar{\bm{\theta}}_i^*,
	\end{align}
	with
	\begin{align}
		\bar{\bm{\Phi}}_i(\bm{x}) &= \mat{\bm{0}_{1 \times 0} & {\bar{\bm{\phi}}_{i,1}^{(r)}(\bm{x})}^{\top} & \bm{0}_{1 \times (p_i-1)\bar{h}_i} \\ \bm{0}_{1 \times \bar{h}_i} & {\bar{\bm{\phi}}_{i,2}^{(r)}(\bm{x})}^{\top} & \bm{0}_{1 \times (p_i-2)\bar{h}_i} \\ \vdots & \vdots & \vdots \\ \bm{0}_{1 \times (p_i-1)\bar{h}_i} & {\bar{\bm{\phi}}_{i,p_i}^{(r)}(\bm{x})}^{\top} & \bm{0}_{1 \times 0}}, \label{eq:Phi_i_bar} \\
		\bar{\bm{\theta}}_i^* &= \mat{ {r^{*}_{ii,1}}^{-1}{\bm{\theta}_i^{(r)}}^* \\ {r^{*}_{ii,2}}^{-1}{\bm{\theta}_i^{(r)}}^* \\ \vdots \\ {r^{*}_{ii,p_i}}^{-1}{\bm{\theta}_i^{(r)}}^* }, \label{eq:theta_i_bar}
	\end{align}
	where $\bm{R}^*_{ii} = \diag\left(r^*_{ii,1}, r^*_{ii,2}, \dots, r^*_{ii,p_i}\right)$.
\end{lemma}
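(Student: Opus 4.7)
The plan is to start from the reformulation \eqref{eq:mu_i_red}, which the text already derives from \eqref{eq:mu_i} by splitting $\bm{\phi}_i$ into the non-vanishing block $\bm{\phi}_i^{(r)}$ and the vanishing block $\bm{\phi}_i^{(-r)}$ under Assumption~\ref{assm:V_i_star}. Because the $\bm{\phi}_i^{(-r)}$-term is annihilated by $\bm{G}_i(\bm{x})^\top$ via \eqref{eq:vanishing_basisFunctions}, it suffices to rewrite the remaining term $-\tfrac{1}{2}{\bm{R}^*_{ii}}^{-1}\bm{\Phi}^{(r)}_i(\bm{x})\,{\bm{\theta}^{(r)}_i}^*$ in the block form claimed in \eqref{eq:mu_i_linear}--\eqref{eq:theta_i_bar}.

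First, I would invoke Assumption~\ref{assm:R_ij}: since $\bm{R}^*_{ii}=\diag(r^*_{ii,1},\dots,r^*_{ii,p_i})$, its inverse is simply $\diag(1/r^*_{ii,1},\dots,1/r^*_{ii,p_i})$. Using the row partition ${\bm{\Phi}^{(r)}_i(\bm{x})}^\top=[\bar{\bm{\phi}}^{(r)}_{i,1}(\bm{x})\ \dots\ \bar{\bm{\phi}}^{(r)}_{i,p_i}(\bm{x})]$ introduced just before the lemma, the $k$-th row of ${\bm{R}^*_{ii}}^{-1}\bm{\Phi}^{(r)}_i(\bm{x})$ equals $(1/r^*_{ii,k})\,{\bar{\bm{\phi}}^{(r)}_{i,k}(\bm{x})}^{\top}$. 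Hence the $k$-th component of $\bm{\mu}^*_i(\bm{x})$ reads $-\tfrac{1}{2}(1/r^*_{ii,k})\,{\bar{\bm{\phi}}^{(r)}_{i,k}(\bm{x})}^{\top}{\bm{\theta}^{(r)}_i}^*$.

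Next, I would match this row-wise expression against the candidate product $-\tfrac{1}{2}\bar{\bm{\Phi}}_i(\bm{x})\bar{\bm{\theta}}^*_i$. By construction of $\bar{\bm{\Phi}}_i(\bm{x})$ in \eqref{eq:Phi_i_bar}, its $k$-th row has only one nonzero block ${\bar{\bm{\phi}}^{(r)}_{i,k}(\bm{x})}^{\top}$ located at the column indices $(k-1)\bar{h}_i+1,\dots,k\bar{h}_i$; multiplying by the stacked vector $\bar{\bm{\theta}}^*_i$ from \eqref{eq:theta_i_bar} therefore picks exactly the block $(1/r^*_{ii,k}){\bm{\theta}^{(r)}_i}^*$. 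This reproduces the same row expression derived above, and since the identity holds for every $k\in\{1,\dots,p_i\}$, the equality \eqref{eq:mu_i_linear} follows.

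The argument is essentially bookkeeping and I do not expect a genuine obstacle; the only care point is aligning the block indexing of $\bar{\bm{\Phi}}_i$ with the stacking order of $\bar{\bm{\theta}}^*_i$ so that the reciprocal $1/r^*_{ii,k}$ lands on the correct row. This is also the step where Assumption~\ref{assm:R_ij} is indispensable: for a non-diagonal $\bm{R}^*_{ii}$, rows of ${\bm{R}^*_{ii}}^{-1}\bm{\Phi}^{(r)}_i(\bm{x})$ would mix several $\bar{\bm{\phi}}^{(r)}_{i,\ell}(\bm{x})$, preventing the clean row-by-row factorization into one basis block times one scaled copy of ${\bm{\theta}^{(r)}_i}^*$ that underlies the form \eqref{eq:Phi_i_bar}--\eqref{eq:theta_i_bar}.
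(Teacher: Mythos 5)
Your proposal is correct and follows essentially the same route as the paper's proof: start from \eqref{eq:mu_i_red}, use the diagonality of $\bm{R}^*_{ii}$ to obtain the component-wise form (the paper's \eqref{eq:proof_mu_i_linear_1}), and then recognize this as the block product $-\tfrac{1}{2}\bar{\bm{\Phi}}_i(\bm{x})\bar{\bm{\theta}}^*_i$. Your version merely spells out the block-index bookkeeping more explicitly than the paper does.
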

\begin{proof}
	By rearranging $\bm{\phi}_i(\bm{x})$ according to \eqref{eq:non_vanishing_basisFunctions} and \eqref{eq:vanishing_basisFunctions}, \eqref{eq:mu_i_red} results under Assumption~\ref{assm:V_i_star} from \eqref{eq:mu_i}. 
	With $\bm{R}^*_{ii} = \diag\left(r^*_{ii,1}, r^*_{ii,2}, \dots, r^*_{ii,p_i}\right)$ (Assumption~\ref{assm:R_ij}), \eqref{eq:mu_i_red} simplifies to 
	\begin{align} \label{eq:proof_mu_i_linear_1}
		\bm{\mu}_i^*(\bm{x}) = -\frac{1}{2} \mat{ {r^{*}_{ii,1}}^{-1} {\bar{\bm{\phi}}_{i,1}^{(r)}(\bm{x})}^{\top} {\bm{\theta}_i^{(r)}}^* \\ {r^{*}_{ii,2}}^{-1} {\bar{\bm{\phi}}_{i,2}^{(r)}(\bm{x})}^{\top} {\bm{\theta}_i^{(r)}}^* \\ \vdots \\ {r^{*}_{ii,p_i}}^{-1} {\bar{\bm{\phi}}_{i,p_i}^{(r)}(\bm{x})}^{\top} {\bm{\theta}_i^{(r)}}^* }.
	\end{align} 
	Finally, \eqref{eq:mu_i_linear} follows from \eqref{eq:proof_mu_i_linear_1} by defining $\bar{\bm{\theta}}_i^*$~\eqref{eq:theta_i_bar}.
\end{proof}

Lemma~\ref{lemma:linear_combination_mu_i} enables the identification of the FNE strategies $\bm{\mu}_i^*(\bm{x})$ and the corresponding value functions weights ${\bm{\theta}_i^{(r)}}^*$ from the $D$ demonstrations. Hereto, $K \in \mathbb{N}_{\geq 1}$ data points are chosen from each demonstration $d \in \{1, \dots, D\}$, ${\bm{x}^*}^{(d)}(t_0 \rightarrow \infty)$ and ${\bm{u}^*}^{(d)}(t_0 \rightarrow \infty)$, in equidistant time steps $\Delta t$:
\begin{align}
	\bm{M}_{u_i} &= \mat{ -\frac{1}{2}\bar{\bm{\Phi}}_i({\bm{x}^*}^{(1)}(t_0)) \\ -\frac{1}{2}\bar{\bm{\Phi}}_i({\bm{x}^*}^{(1)}(t_0 + \Delta t)) \\ \vdots \\ -\frac{1}{2}\bar{\bm{\Phi}}_i({\bm{x}^*}^{(1)}(t_0 + (K-1)\Delta t )) \\ -\frac{1}{2}\bar{\bm{\Phi}}_i({\bm{x}^*}^{(2)}(t_0)) \\ \vdots \\ -\frac{1}{2}\bar{\bm{\Phi}}_i({\bm{x}^*}^{(D)}(t_0 + (K-1) \Delta t)) }, \label{eq:M_u_full} \\
	\bm{z}_{u_i} &= \mat{ {\bm{u}_i^*}^{(1)}(t_0) \\ {\bm{u}_i^*}^{(1)}(t_0 + \Delta t) \\ \vdots \\ {\bm{u}_i^*}^{(1)}(t_0 + (K-1)\Delta t) \\ {\bm{u}_i^*}^{(2)}(t_0) \\ \vdots \\ {\bm{u}_i^*}^{(D)}(t_0 + (K-1)\Delta t) }. \label{eq:z_u_full}
\end{align}
The number of demonstrations $D$ and the initial states ${\bm{x}^*}^{(d)}(t_0)$ of them as well as the sampling parameters $K$ and $\Delta t$ should be chosen such that Assumption~\ref{assm:excitation_FNE_offline} for enough excitation in the observed data is fulfilled.
\begin{assumption} \label{assm:excitation_FNE_offline}
	The number of demonstrations $D$, the initial states ${\bm{x}^*}^{(d)}(t_0)$, $K$ and $\Delta t$ in \eqref{eq:M_u_full} and \eqref{eq:z_u_full} are chosen such that $\rank\left(\bm{M}_{u_i}\right)\leq p_i\bar{h}_i$ shows the highest possible column rank, i.e. further data points do not increase $\rank\left(\bm{M}_{u_i}\right)$.
\end{assumption}

Now, Lemma~\ref{lemma:FNE_ident_offline} yields the identification $\hat{\bm{\mu}}_i(\bm{x})$ of the FNE strategy $\bm{\mu}^*_i(\bm{x})$ of one player $i \in \mathcal{N}$ as well as an estimate $\hat{\bm{\theta}}_i^{(r)}$ of the reduced value function weight vector ${\bm{\theta}_i^{(r)}}^*$.
\begin{lemma} \label{lemma:FNE_ident_offline}
	If Assumptions~\ref{assm:R_ij}, \ref{assm:V_i_star}, \ref{assm:systemDynamics_basisFunctions_known} and \ref{assm:excitation_FNE_offline} hold,
	\begin{align} \label{eq:theta_bar_hat}
		\hat{\bar{\bm{\theta}}}_i = \left(\bm{M}_{u_i}^\top \bm{M}_{u_i}\right)^{+} \bm{M}_{u_i}^\top \bm{z}_{u_i}
	\end{align}
	yields
	\begin{align} \label{eq:mu_i_hat}
		\bm{\mu}_i^*(\bm{x}) = \hat{\bm{\mu}}_i(\bm{x}) = -\frac{1}{2} \bar{\bm{\Phi}}_i(\bm{x}) \hat{\bar{\bm{\theta}}}_i.
	\end{align}
	If additionally $\rank\left(\bm{M}_{u_i}\right)=p_i\bar{h}_i$, we can find $c_i \in \mathbb{R}_{> 0}$ such that with $\bm{P}_j = \mat{ \bm{0}_{\bar{h}_i \times (j-1)\bar{h}_i} & \bm{I}_{\bar{h}_i\times\bar{h}_i} & \bm{0}_{\bar{h}_i \times (p_i-j)\bar{h}_i} }$
	\begin{align} \label{eq:theta_r_hat}
		\hat{\bm{\theta}}_i^{(r)} = \frac{1}{p_i} \sum_{j=1}^{p_i} \frac{\bm{P}_j \hat{\bar{\bm{\theta}}}_i }{\norm{\bm{P}_j \hat{\bar{\bm{\theta}}}_i }_2} = c_i {\bm{\theta}_i^{(r)}}^*.
	\end{align}
\end{lemma}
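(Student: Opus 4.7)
The plan is to reduce both claims to linear-algebra facts about the overdetermined system $\bm{M}_{u_i}\bar{\bm{\theta}}_i = \bm{z}_{u_i}$.

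First I would show that the true parameter vector $\bar{\bm{\theta}}_i^*$ from Lemma~\ref{lemma:linear_combination_mu_i} is an exact solution of this system. Each block row of $\bm{M}_{u_i}$ equals $-\tfrac{1}{2}\bar{\bm{\Phi}}_i$ evaluated at a sampled demonstration state, and \eqref{eq:mu_i_linear} together with the FNE property of the demonstrations gives
\begin{equation*}
-\tfrac{1}{2}\bar{\bm{\Phi}}_i\bigl({\bm{x}^*}^{(d)}(t_0+k\Delta t)\bigr)\,\bar{\bm{\theta}}_i^* = \bm{\mu}_i^*\bigl({\bm{x}^*}^{(d)}(t_0+k\Delta t)\bigr) = {\bm{u}_i^*}^{(d)}(t_0+k\Delta t)
\end{equation*}
for every sampled data pair. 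Hence $\bm{z}_{u_i}$ lies in the column range of $\bm{M}_{u_i}$, so the pseudoinverse formula \eqref{eq:theta_bar_hat} returns the minimum-norm element of the (nonempty) solution set, and in particular $\bm{M}_{u_i}\hat{\bar{\bm{\theta}}}_i = \bm{z}_{u_i}$.

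The crucial step is to upgrade this agreement from the sampled states to every $\bm{x}\in\mathcal{X}$, i.e.\ to show $\bar{\bm{\Phi}}_i(\bm{x})(\hat{\bar{\bm{\theta}}}_i - \bar{\bm{\theta}}_i^*) = \bm{0}$ for all $\bm{x}\in\mathcal{X}$. I would argue by contradiction: if $\bar{\bm{\Phi}}_i(\tilde{\bm{x}})(\hat{\bar{\bm{\theta}}}_i - \bar{\bm{\theta}}_i^*)\neq\bm{0}$ for some $\tilde{\bm{x}}\in\mathcal{X}$, starting a new demonstration at $\tilde{\bm{x}}$ and appending the block row $-\tfrac{1}{2}\bar{\bm{\Phi}}_i(\tilde{\bm{x}})$ to $\bm{M}_{u_i}$ would remove $\hat{\bar{\bm{\theta}}}_i - \bar{\bm{\theta}}_i^*$ from the kernel and thereby strictly increase the column rank, violating the maximality required by Assumption~\ref{assm:excitation_FNE_offline}. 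Therefore $\bm{\mu}_i^*(\bm{x}) = -\tfrac{1}{2}\bar{\bm{\Phi}}_i(\bm{x})\bar{\bm{\theta}}_i^* = -\tfrac{1}{2}\bar{\bm{\Phi}}_i(\bm{x})\hat{\bar{\bm{\theta}}}_i = \hat{\bm{\mu}}_i(\bm{x})$ on all of $\mathcal{X}$, giving \eqref{eq:mu_i_hat}.

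For the second claim, the full-column-rank hypothesis makes the solution of the linear system unique, so $\hat{\bar{\bm{\theta}}}_i = \bar{\bm{\theta}}_i^*$. Substituting the block structure \eqref{eq:theta_i_bar} gives $\bm{P}_j\hat{\bar{\bm{\theta}}}_i = {r^*_{ii,j}}^{-1}{\bm{\theta}_i^{(r)}}^*$, and since $r^*_{ii,j}>0$ (from $\bm{R}^*_{ii}\succ\bm{0}$ together with Assumption~\ref{assm:R_ij}), the positive scalar cancels upon $\ell_2$-normalization, yielding the $j$-independent unit vector ${\bm{\theta}_i^{(r)}}^*/\norm{{\bm{\theta}_i^{(r)}}^*}_2$. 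Averaging over $j$ leaves the same vector, so $\hat{\bm{\theta}}_i^{(r)} = c_i{\bm{\theta}_i^{(r)}}^*$ with $c_i = 1/\norm{{\bm{\theta}_i^{(r)}}^*}_2 > 0$, the nondegenerate case ${\bm{\theta}_i^{(r)}}^*\neq\bm{0}$ being implicit. The main obstacle will be making the rank-maximality step watertight, i.e.\ converting Assumption~\ref{assm:excitation_FNE_offline}'s operational wording into the clean algebraic conclusion $\ker(\bm{M}_{u_i})\subseteq\ker(\bar{\bm{\Phi}}_i(\bm{x}))$ for every $\bm{x}\in\mathcal{X}$; once this kernel inclusion is secured, the remaining steps are essentially bookkeeping.
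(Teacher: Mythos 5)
Your proposal is correct and follows essentially the same route as the paper: establish $\bm{z}_{u_i}=\bm{M}_{u_i}\bar{\bm{\theta}}_i^*$ from Lemma~\ref{lemma:linear_combination_mu_i}, observe that \eqref{eq:theta_bar_hat} is an exact (minimum-norm) solution of the consistent system, invoke Assumption~\ref{assm:excitation_FNE_offline} to extend agreement from the sampled states to all of $\mathcal{X}$, and in the full-column-rank case obtain $\hat{\bar{\bm{\theta}}}_i=\bar{\bm{\theta}}_i^*$ so that the $\ell_2$-normalization cancels the factors ${r^*_{ii,j}}^{-1}$ and yields $c_i=1/\norm{{\bm{\theta}_i^{(r)}}^*}_2$. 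Your explicit kernel-inclusion argument ($\ker(\bm{M}_{u_i})\subseteq\ker(\bar{\bm{\Phi}}_i(\bm{x}))$ for all $\bm{x}\in\mathcal{X}$, via the rank-maximality contradiction) is just a sharper phrasing of the paper's statement that further data points do not shrink the set of global minimizers.
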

\begin{proof}
	From Lemma~\ref{lemma:linear_combination_mu_i}, we derive
	\begin{align} \label{eq:proof_FNE_ident_offline_1}
		\bm{z}_{u_i} = \bm{M}_{u_i} \bar{\bm{\theta}}_i^*.
	\end{align}
	In order to determine $\bar{\bm{\theta}}_i^*$ and thus, identify $\bm{\mu}_i^*(\bm{x})$, we define $\bm{\epsilon}_{u_i} = \bm{M}_{u_i} \bar{\bm{\theta}}_i - \bm{z}_{u_i}$ and set up the convex QP
	\begin{align} \label{eq:proof_FNE_ident_offline_2}
		&\min_{\bar{\bm{\theta}}_i} \left\{ \frac{1}{2} \bm{\epsilon}_{u_i}^\top \bm{\epsilon}_{u_i} \right\}= \notag \\ 
		&\min_{\bar{\bm{\theta}}_i} \left\{ \frac{1}{2} \bar{\bm{\theta}}_i^\top \bm{M}_{u_i}^\top \bm{M}_{u_i} \bar{\bm{\theta}}_i - \bm{z}_{u_i}^\top \bm{M}_{u_i} \bar{\bm{\theta}}_i + \frac{1}{2} \bm{z}_{u_i}^\top\bm{z}_{u_i} \right\}.
	\end{align}
	Since \eqref{eq:proof_FNE_ident_offline_1} holds, all global minimizers 
	\begin{align} \label{eq:proof_FNE_ident_offline_3}
		\hat{\bar{\bm{\theta}}}_i = \bar{\bm{M}}_{u_i}^{+} \bm{M}_{u_i}^\top \bm{z}_{u_i} + \left( \bm{I} - \bar{\bm{M}}_{u_i}^{+} \bar{\bm{M}}_{u_i} \right) \bm{w}_i,
	\end{align}
	where $\bar{\bm{M}}_{u_i} = \left({\bm{M}_{u_i}}^\top \bm{M}_{u_i}\right)$, $(\cdot)^+$ denotes the Moore-Penrose inverse and $\bm{w}_i \in \mathbb{R}^{p_i\bar{h}_i}$ an arbitrary vector, of QP~\eqref{eq:proof_FNE_ident_offline_2} yield $\bm{\epsilon}_{u_i}=\bm{0}$. From Assumption~\ref{assm:excitation_FNE_offline}, we conclude that further data points do not shrink the set of global minimizers and that every $\hat{\bar{\bm{\theta}}}_i$~\eqref{eq:proof_FNE_ident_offline_3} yields \eqref{eq:mu_i_hat}, also the one with $\bm{w}_i=\bm{0}$, i.e. \eqref{eq:theta_bar_hat}. If $\rank\left(\bm{M}_{u_i}\right)=p_i\bar{h}_i$, the QP is strict convex and \eqref{eq:theta_bar_hat} is the unique minimizer with $\hat{\bar{\bm{\theta}}}_i=\bar{\bm{\theta}}_i^*$. Hence, we get with
	\begin{align} \label{eq:proof_FNE_ident_offline_4}
		\hat{\bm{\theta}}_i^{(r)} = \frac{1}{p_i} \sum_{j=1}^{p_i} \frac{\bm{P}_j \hat{\bar{\bm{\theta}}}_i }{\norm{\bm{P}_j \hat{\bar{\bm{\theta}}}_i }_2} = \frac{{\bm{\theta}_i^{(r)}}^*}{\norm{{\bm{\theta}_i^{(r)}}^*}_2} = c_i {\bm{\theta}_i^{(r)}}^*
	\end{align}
	relation~\eqref{eq:theta_r_hat} to the searched parameters~${\bm{\theta}_i^{(r)}}^*$.
\end{proof}

Now, in the second step of our IDG method, we use the identified FNE strategies $\hat{\bm{\mu}}_i(\bm{x})$ of each player $i$ and the estimate $\hat{\bm{\theta}}_i^{(r)}$ of one part of the value function weights in the HJB equations \eqref{eq:hjb} to solve Problem~\ref{problem:offline_idg}. If $\rank\left(\bm{M}_{u_i}\right)=p_i\bar{h}_i$, using Assumption~\ref{assm:Q_i}, \ref{assm:R_ij} and \ref{assm:V_i_star}, we reformulate the coupled HJB equations~\eqref{eq:hjb} as
\begin{align} 
	\underbrace{\mat{ \bm{\mu}^{*\top} \odot \bm{\mu}^{*\top} & \bm{\psi}_i^\top & \left( \pdv{\bm{\phi}_i^{(-r)}}{\bm{x}} \bm{f}^*_g \right)^\top }}_{{\bm{m}_{\hjb_i}^{(r)}(\bm{x})}^\top} \mat{ \bm{\alpha}^*_i \\ \bm{\beta}^*_i \\ {\bm{\theta}_i^{(-r)}}^* } = \notag \\ \underbrace{- {\bm{\theta}_i^{(r)}}^{*\top} \pdv{\bm{\phi}_i^{(r)}}{\bm{x}} \bm{f}^*_g}_{z^{(r)}_{\hjb_i}(\bm{x})}, \label{eq:hjb_linear_1}
\end{align}
where $\bm{f}^*_g = \bm{f} + \sum_{j=1}^{N} \bm{G}_j \bm{\mu}_j^*$, ${\bm{\alpha}^*_i}^\top = \mat{ r^*_{i1,1} & \dots & r^*_{i1,p_1} & r^*_{i2,1} & \dots & r^*_{iN,p_N} }$ (diagonal elements of $\bm{R}^*_{ij}$, $\forall j \in \mathcal{N}$) and $\odot$ is the Hadamard product. If $\rank\left(\bm{M}_{u_i}\right)<p_i\bar{h}_i$, we reformulate \eqref{eq:hjb} as
\begin{align} 
	\underbrace{\mat{ \bm{\mu}^{*\top} \odot \bm{\mu}^{*\top} & \bm{\psi}_i^\top & \left( \pdv{\bm{\phi}_i}{\bm{x}} \bm{f}^*_g \right)^\top }}_{{\bm{m}_{\hjb_i}(\bm{x})}^\top} \mat{ \bm{\alpha}^*_i \\ \bm{\beta}^*_i \\ {\bm{\theta}_i}^* } = \underbrace{0}_{z_{\hjb_i}(\bm{x})}. \label{eq:hjb_linear_2}
\end{align}

Since \eqref{eq:hjb_linear_1} and \eqref{eq:hjb_linear_2} hold $\forall \bm{x} \in \mathcal{X}$, we choose $\bar{K} \in \mathbb{N}_{\geq 1}$ arbitrary data points $\bm{x}_{\bar{k}} \in \mathcal{X}$ and define
\begin{align}
	\bm{M}^{(r)}_{\hjb_i} &= \mat{ {\bm{m}_{\hjb_i}^{(r)}(\bm{x}_1)}^\top \\ \vdots \\ {\bm{m}_{\hjb_i}^{(r)}(\bm{x}_{\bar{K}})}^\top }, \label{eq:M_hjb_full_red} \\
	\bm{z}^{(r)}_{\hjb_i} &= \mat{ z_{\hjb_i}^{(r)}(\bm{x}_1) \\ \vdots \\ z_{\hjb_i}^{(r)}(\bm{x}_{\bar{K}}) } \label{eq:z_hjb_full_red}
\end{align}
and $\bm{M}_{\hjb_i}$ and $\bm{z}_{\hjb_i}$ with $\bm{m}_{\hjb_i}(\bm{x})$ and $z_{\hjb_i}(\bm{x})$ accordingly. 
The points $\bm{x}_{\bar{k}} \in \mathcal{X}$ need to be chosen such that the excitation Assumption~\ref{assm:excitation_HJB_offline} is fulfilled.
\begin{assumption} \label{assm:excitation_HJB_offline}
	The data points $\bm{x}_{\bar{k}}$ to set up $\bm{M}^{(r)}_{\hjb_i}$, $\bm{z}^{(r)}_{\hjb_i}$ and $\bm{M}_{\hjb_i}$, $\bm{z}_{\hjb_i}$, respectively, are chosen such that $\rank \left( \bm{M}^{(r)}_{\hjb_i} \right)\leq p + m_i + h_i - \bar{h}_i$ and $\rank \left( \bm{M}_{\hjb_i} \right)\leq p + m_i + h_i$, respectively, shows the highest possible column rank, i.e. further data points do not increase $\rank \left( \bm{M}^{(r)}_{\hjb_i} \right)$ and $\rank \left( \bm{M}_{\hjb_i} \right)$, respectively.
\end{assumption}
\comment{data points should be sufficiently distributed in $\mathcal{X}$, e.g. equidistant grid around origin typically a good choice (see numerical example)}

We provide our first main result in Theorem~\ref{theorem:HJB_ident_offline}, i.e. the solution to Problem~\ref{problem:offline_idg}, for the two different cases occurring in the FNE identification of player $i \in \mathcal{N}$: 1) $\rank\left(\bm{M}_{u_i}\right)=p_i\bar{h}_i$ and 2) $\rank\left(\bm{M}_{u_i}\right)<p_i\bar{h}_i$.  
\begin{theorem} \label{theorem:HJB_ident_offline}
	Let Assumptions~\ref{assm:Q_i}, \ref{assm:R_ij}, \ref{assm:V_i_star}, \ref{assm:systemDynamics_basisFunctions_known}, \ref{assm:excitation_FNE_offline} and \ref{assm:excitation_HJB_offline} hold $\forall i \in \mathcal{N}$. Furthermore, let the identification of the FNE strategy $\bm{\mu}_i^*(\bm{x})$ $\forall i \in \mathcal{N}$ be performed by Lemma~\ref{lemma:FNE_ident_offline}. If for player $i \in \mathcal{N}$ 
	\begin{enumerate}
		\item $\rank\left(\bm{M}_{u_i}\right)=p_i\bar{h}_i$, we define the set of parameters
		\begin{align} \label{eq:HJB_ident_offline_solSet_1}
			\mat{ \hat{\bm{\alpha}}_i \\ \hat{\bm{\beta}}_i } &= \bm{L}^{(r)}_i \bar{\bm{M}}^{(r)^+}_{\hjb_i}\bm{M}^{(r)^\top}_{\hjb_i} \bm{z}^{(r)}_{\hjb_i} \notag \\
			&\hphantom{=}+ \bm{L}^{(r)}_i \left(\bm{I} - \bar{\bm{M}}^{(r)^+}_{\hjb_i} \bar{\bm{M}}^{(r)}_{\hjb_i} \right) \bar{\bm{w}}^{(r)}_i,
		\end{align}
		where $\bm{L}^{(r)}_i = \mat{ \bm{I}_{p+m_i \times p+m_i} & \bm{0}_{p+m_i \times h_i-\bar{h}_i} }$, $\bar{\bm{M}}^{(r)}_{\hjb_i} = \bm{M}^{(r)^\top}_{\hjb_i}\bm{M}^{(r)}_{\hjb_i}$ and $\bar{\bm{w}}^{(r)}_i \in \mathbb{R}^{p+m_i+h_i-\bar{h}_i}$ is arbitrary but such that $\hat{\bm{\alpha}}_i$ and $\hat{\bm{\beta}}_i$ guarantee that the decoupled HJB equation (\eqref{eq:hjb} with $\hat{\bm{\mu}}_i(\bm{x})$ for $\bm{\mu}^*_i(\bm{x}),\forall i \in \mathcal{N}$) is necessary and sufficient for a unique OC solution\comment{unique solution = unique value function and thus, unique control law; results in LQ case (ARE) existent, but scarce in nonlinear case, only results for unique LE solutions typically to ensure unique sequence of value functions to optimal value functions, which however only guarantees convergence of a PI algorithm but does not say anything about the uniqueness of the solution - through the assumption in problem formulation on existence of unique FNE, unique OC solutions here in principle possible}. The set~\eqref{eq:HJB_ident_offline_solSet_1} consists of only one element $\hat{\bm{\alpha}}_i = c_i \bm{\alpha}^*_i$ and $\hat{\bm{\beta}}_i = c_i \bm{\beta}^*_i$ if and only if $\bm{L}^{(r)}_i \left(\bm{I} - \bar{\bm{M}}^{(r)^+}_{\hjb_i} \bar{\bm{M}}^{(r)}_{\hjb_i} \right) = \bm{0}$.
		\item $\rank\left(\bm{M}_{u_i}\right)<p_i\bar{h}_i$, we define the set of parameters
		\begin{align} \label{eq:HJB_ident_offline_solSet_2}
			\mat{ \hat{\bm{\alpha}}_i \\ \hat{\bm{\beta}}_i } &= \bm{L}_i \bar{\bm{M}}^{+}_{\hjb_i}\bm{M}^{\top}_{\hjb_i} \bm{z}_{\hjb_i} \notag \\
			&\hphantom{=}+  \bm{L}_i \left(\bm{I} - \bar{\bm{M}}^{+}_{\hjb_i} \bar{\bm{M}}_{\hjb_i} \right) \bar{\bm{w}}_i,
		\end{align}
		where $\bm{L}_i = \mat{ \bm{I}_{p+m_i \times p+m_i} & \bm{0}_{p+m_i \times h_i} }$, $\bar{\bm{M}}_{\hjb_i} = \bm{M}^{\top}_{\hjb_i}\bm{M}_{\hjb_i}$ and $\bar{\bm{w}}_i \in \mathbb{R}^{p+m_i+h_i}$ is arbitrary but such that $\hat{\bm{\alpha}}_i$ and $\hat{\bm{\beta}}_i$ guarantee that the decoupled HJB equation (\eqref{eq:hjb} with $\hat{\bm{\mu}}_i(\bm{x})$ for $\bm{\mu}^*_i(\bm{x}),\forall i \in \mathcal{N}$) is necessary and sufficient for a unique OC solution\comment{unique solution = unique value function and thus, unique control law}. The set~\eqref{eq:HJB_ident_offline_solSet_2} consists of only one element $\hat{\bm{\alpha}}_i = \bm{\alpha}^*_i$ and $\hat{\bm{\beta}}_i = \bm{\beta}^*_i$ if and only if $\bm{L}_i \left(\bm{I} - \bar{\bm{M}}^{+}_{\hjb_i} \bar{\bm{M}}_{\hjb_i} \right) = \bm{0}$.
	\end{enumerate}
	All parameters $\hat{\bm{\alpha}}_i$, $\hat{\bm{\beta}}_i , \forall i \in \mathcal{N}$ according to the sets \eqref{eq:HJB_ident_offline_solSet_1} and \eqref{eq:HJB_ident_offline_solSet_2}, respectively, solve Problem~\ref{problem:offline_idg}.
\end{theorem}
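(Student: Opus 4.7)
My plan is to build the proof in three stages: showing that a ground-truth solution to the linearized HJB system exists, characterizing the complete affine family of solutions via the Moore-Penrose inverse, and translating consistency of the HJB equations into equality of FNE trajectories.

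First, I would verify that the linear systems \eqref{eq:hjb_linear_1} and \eqref{eq:hjb_linear_2} admit at least one exact solution at the chosen $\bar{K}$ data points. In case~1, Lemma~\ref{lemma:FNE_ident_offline} gives $\hat{\bm{\theta}}_i^{(r)} = c_i {\bm{\theta}_i^{(r)}}^*$, so multiplying the original HJB equation \eqref{eq:hjb} by $c_i$ (which leaves $\bm{\mu}_i^*$ invariant because the resulting scaling of $\bm{R}_{ii}^*$ cancels that of $\partial V_i^*/\partial \bm{x}$ in \eqref{eq:mu_i}) produces an exact ground-truth solution $(c_i \bm{\alpha}_i^*, c_i \bm{\beta}_i^*, c_i {\bm{\theta}_i^{(-r)}}^*)$ of \eqref{eq:hjb_linear_1} at every $\bm{x} \in \mathcal{X}$. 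In case~2, $(\bm{\alpha}_i^*, \bm{\beta}_i^*, \bm{\theta}_i^*)$ itself satisfies \eqref{eq:hjb_linear_2} by construction since $\hat{\bm{\mu}}_i = \bm{\mu}_i^*$.

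Second, I would show that the right-hand sides of \eqref{eq:HJB_ident_offline_solSet_1} and \eqref{eq:HJB_ident_offline_solSet_2} enumerate the $\bm{L}$-projections of all solutions of the corresponding linear systems at the $\bar{K}$ data points. This is the standard Moore-Penrose fact: $\bar{\bm{M}}^{+}\bm{M}^{\top}\bm{z} + (\bm{I} - \bar{\bm{M}}^{+}\bar{\bm{M}})\bar{\bm{w}}$ ranges precisely over the minimum-norm solution plus the null space of $\bm{M}$, which, given that at least one solution exists by stage one, is the complete solution set; $\bm{L}^{(r)}_i$ and $\bm{L}_i$ then extract the entries corresponding to $\bm{\alpha}_i,\bm{\beta}_i$. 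Invoking Assumption~\ref{assm:excitation_HJB_offline}, since additional data points cannot increase $\rank(\bm{M}^{(r)}_{\hjb_i})$ (resp.\ $\rank(\bm{M}_{\hjb_i})$), any admissible tuple automatically satisfies the reformulated HJB equation at \emph{every} $\bm{x} \in \mathcal{X}$, not merely at the sampled states.

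Third, to conclude that the reconstructed parameters solve Problem~\ref{problem:offline_idg}, I would note that $\hat{\bm{\mu}}_i = \bm{\mu}_i^*$ by Lemma~\ref{lemma:FNE_ident_offline}, so substituting $\hat{\bm{\mu}}_j = \bm{\mu}_j^*$ for all $j$ into \eqref{eq:hjb} produces, for each player $i$, a decoupled HJB equation that is satisfied by the reconstructed $\hat{V}_i$, $\hat{\bm{\beta}}_i$, $\hat{\bm{R}}_{ij}$ under $\hat{\bm{\mu}}_i$. Together with the stated constraint on $\bar{\bm{w}}_i^{(r)}$ (resp.\ $\bar{\bm{w}}_i$) that guarantees necessary and sufficient conditions for a unique OC solution, this implies $\hat{\bm{\mu}}_i$ is the unique optimal response of player $i$ to $\bm{\mu}_{-i}^*$ under the reconstructed cost; applying this for every $i \in \mathcal{N}$ shows that $\{\hat{\bm{\mu}}_i\} = \{\bm{\mu}_i^*\}$ is an FNE of the reconstructed DG and therefore reproduces the GT trajectories from any initial condition ${\bm{x}^*}^{(d)}(t_0)$. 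The singleton characterizations follow because the affine family collapses exactly when $\bm{L}^{(r)}_i(\bm{I} - \bar{\bm{M}}^{(r)^+}_{\hjb_i}\bar{\bm{M}}^{(r)}_{\hjb_i}) = \bm{0}$ (resp.\ $\bm{L}_i(\bm{I} - \bar{\bm{M}}^{+}_{\hjb_i}\bar{\bm{M}}_{\hjb_i}) = \bm{0}$); the remaining least-squares term then coincides with the ground-truth $(c_i\bm{\alpha}_i^*, c_i\bm{\beta}_i^*)$ or $(\bm{\alpha}_i^*, \bm{\beta}_i^*)$.

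The main obstacle I anticipate is the last step: arguing rigorously that HJB consistency plus uniqueness of the OC solution truly transfers back to equality of the reconstructed FNE with the GT FNE. Since the HJB equation is, in general, only necessary for optimality, all of the work is encoded in the side condition restricting $\bar{\bm{w}}_i^{(r)}$ and $\bar{\bm{w}}_i$; being able to appeal to this condition cleanly, without getting tangled in when exactly it can be verified in the nonlinear setting, is what makes the claim go through.
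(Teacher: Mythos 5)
Your proposal is correct and follows essentially the same route as the paper's proof: establish that the (scaled) ground-truth parameters exactly solve the decoupled linearized HJB system, characterize the full solution set via the Moore--Penrose pseudoinverse of the least-squares problem under Assumption~\ref{assm:excitation_HJB_offline}, and transfer HJB consistency back to equality of FNEs and trajectories through the side condition that the decoupled HJB equation be necessary and sufficient for a unique OC solution. Your explicit remark that the scaling $c_i$ leaves $\bm{\mu}_i^*$ invariant (cancellation between $\bm{R}_{ii}^*$ and $\partial V_i^*/\partial\bm{x}$) is a detail the paper only handles implicitly, but it does not change the argument.
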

\begin{proof}
	If for all players $i \in \mathcal{N}$ the FNE strategy identification is performed according to Lemma~\ref{lemma:FNE_ident_offline}, we have $\bm{\mu}^*_i(\bm{x}) = \hat{\bm{\mu}}_i(\bm{x}), \forall i \in \mathcal{N}$ and get with Assumption~\ref{assm:Q_i}, \ref{assm:R_ij} and \ref{assm:V_i_star} $N$ decoupled equations by \eqref{eq:hjb_linear_1} or \eqref{eq:hjb_linear_2} (depending on $\rank\left(\bm{M}_{u_i}\right)$ in the FNE identification) with $\bm{f}_g^* = \hat{\bm{f}}_g = \bm{f} + \sum_{j=1}^{N} \bm{G}_j \hat{\bm{\mu}}_j$ and $\bm{\mu}^* = \hat{\bm{\mu}}$. Eqs.~\eqref{eq:hjb_linear_1} and \eqref{eq:hjb_linear_2} only depend on parameters of player $i$. Now, if for every player $i$ the set of parameters $\hat{\bm{\alpha}}_i$, $\hat{\bm{\beta}}_i$ is computed that fulfills \eqref{eq:hjb_linear_1} or \eqref{eq:hjb_linear_2} and that guarantees that the decoupled HJB equation is necessary and sufficient for a unique OC solution, all parameter combinations $\hat{\bm{\alpha}}_i$, $\hat{\bm{\beta}}_i, \forall i \in \mathcal{N}$ yield a FNE $\{\hat{\bm{\mu}}^*_i: i \in \mathcal{N}\}$ with $\hat{\bm{\mu}}^*_i(\bm{x})=\hat{\bm{\mu}}_i(\bm{x})=\bm{\mu}^*_i(\bm{x}), \forall i \in \mathcal{N}$. Due to the assumed differentiability characteristics on the system dynamics functions (see Section~\ref{sec:problem}), the FNE $\{\hat{\bm{\mu}}^*_i: i \in \mathcal{N}\}$ yields unique trajectories ${\hat{\bm{x}}}^{*(d)}(t_0 \rightarrow \infty)$ and ${\hat{\bm{u}}}^{*(d)}(t_0 \rightarrow \infty), \forall d \in \{1,\dots,D\}$ with ${\hat{\bm{x}}}^{*(d)}(t_0 \rightarrow \infty) = {\bm{x}^*}^{(d)}(t_0 \rightarrow \infty)$ and ${\hat{\bm{u}}}^{*(d)}(t_0 \rightarrow \infty) = {\bm{u}^*}^{(d)}(t_0 \rightarrow \infty)$. 
	
	Now, we show that for every player $i$ the sets of parameters $\hat{\bm{\alpha}}_i$, $\hat{\bm{\beta}}_i$  are given by \eqref{eq:HJB_ident_offline_solSet_1} and \eqref{eq:HJB_ident_offline_solSet_2}. In case of $\rank\left(\bm{M}_{u_i}\right)=p_i\bar{h}_i$, we compute $\bm{M}_{\hjb_i}^{(r)}$ and $\bm{z}_{\hjb_i}^{(r)}$ with $\bm{f}_g^* = \hat{\bm{f}}_g = \bm{f} + \sum_{j=1}^{N} \bm{G}_j \hat{\bm{\mu}}_j$ and ${\bm{\theta}_i^{(r)}}^* = \hat{\bm{\theta}}_i^{(r)}$ such that 
	\begin{align} \label{eq:proof_HJB_ident_offline_1}
		\bm{M}^{(r)}_{\hjb_i} \bm{\eta}^{(r)^*}_i = \bm{z}^{(r)}_{\hjb_i}, \bm{\eta}^{(r)^*}_i= \mat{ c_i{\bm{\alpha}^*_i}^\top \!\!&\!\! c_i{\bm{\beta}^*_i}^\top \!\! & \!\! c_i{\bm{\theta}_i^{(-r)}}^{*\top} }^\top
	\end{align}
	follows from \eqref{eq:hjb_linear_1}. As for the FNE identification, we define $\bm{\epsilon}^{(r)}_{\hjb_i}=\bm{M}^{(r)}_{\hjb_i}\bm{\eta}^{(r)}_i-\bm{z}^{(r)}_{\hjb_i}$ and set up the convex QP
	\begin{align} \label{eq:proof_HJB_ident_offline_2}
		&\min_{\bm{\eta}_i^{(r)}} \left\{ \frac{1}{2} {\bm{\epsilon}^{(r)}_{\hjb_i}}^\top \bm{\epsilon}^{(r)}_{\hjb_i} \right\}, 
	\end{align}
	whose global minimizers yield $\bm{\epsilon}^{(r)}_{\hjb_i}=\bm{0}$ due to \eqref{eq:proof_HJB_ident_offline_1} and can be computed by \eqref{eq:HJB_ident_offline_solSet_1} without the multiplications with $\bm{L}_i^{(r)}$. If Assumption~\ref{assm:excitation_HJB_offline} holds, the smallest set of global minimizers are guaranteed and we find $\bar{\bm{w}}^{(r)}_i$ such that \eqref{eq:HJB_ident_offline_solSet_1} yields $\hat{\bm{\alpha}}_i = c_i \bm{\alpha}^*_i$ and $\hat{\bm{\beta}}_i = c_i \bm{\beta}^*_i$, which in turn yield the FNE $\{\hat{\bm{\mu}}^*_i: i \in \mathcal{N}\}$ with $\hat{\bm{\mu}}^*_i(\bm{x})=\hat{\bm{\mu}}_i(\bm{x})=\bm{\mu}^*_i(\bm{x}), \forall i \in \mathcal{N}$ (see assumption in Section~\ref{sec:problem} that HJB equations are necessary and sufficient for a unique FNE with $\bm{\alpha}^*_i$, $\bm{\beta}^*_i$\comment{here, an appropriate scaling $c_i$ is assumed such that $\hat{\bm{\alpha}}_i$, $\hat{\bm{\beta}}_i$ still shape the HJB equations necessary and sufficient for a unique FNE}). Now, if arbitrary vectors $\bar{\bm{w}}^{(r)}_i$ are chosen such that the parameters~\eqref{eq:HJB_ident_offline_solSet_1} still guarantee that the decoupled HJB equation~\eqref{eq:hjb_linear_1} is necessary and sufficient for a unique OC solution, they all yield $\{\hat{\bm{\mu}}^*_i: i \in \mathcal{N}\}$ with $\hat{\bm{\mu}}^*_i(\bm{x})=\hat{\bm{\mu}}_i(\bm{x})=\bm{\mu}^*_i(\bm{x}), \forall i \in \mathcal{N}$. Obviously, if and only if $\bm{L}^{(r)}_i \left(\bm{I} - \bar{\bm{M}}^{(r)^+}_{\hjb_i} \bar{\bm{M}}^{(r)}_{\hjb_i} \right) = \bm{0}$, we compute unique parameters by \eqref{eq:HJB_ident_offline_solSet_1}.
	In case of $\rank\left(\bm{M}_{u_i}\right)<p_i\bar{h}_i$, we follow the same logic to derive the set~\eqref{eq:HJB_ident_offline_solSet_2} but define the QP based on $\bm{\epsilon}_{\hjb_i} = \bm{M}_{\hjb_i} \bm{\eta}_i - \bm{z}_{\hjb_i}$ and its guaranteed global minimizer $\bm{\eta}^{*\top}_i = \mat{ {\bm{\alpha}^*_i}^\top \!\!&\!\! {\bm{\beta}^*_i}^\top \!\! & \!\! {\bm{\theta}_i^*}^{\top} }$.
\end{proof}

For reproducibility, Algorithm~\ref{alg:offlineIDGHJB} summarizes the procedure of our new offline IDG method.


\begin{remark} \label{remark:benefit_VFweightsIdent}
	Using the identified value function weights~$\hat{\bm{\theta}}_i^{(r)}$ in the HJB identification (when $\rank\left(\bm{M}_{u_i}\right)=p_i\bar{h}_i$) provides two benefits: Assumption~\ref{assm:excitation_HJB_offline} is easier to fulfill and the non-relevant solution $\hat{\bm{\eta}}_i=\bm{0}$ is avoided by default.
\end{remark}
\comment{in general, reformulating the FNE strategies based on the basis functions is necessary to enable fulfillment of Assumption~\ref{assm:excitation_FNEident_online} and thus, functioning of the online method}

\begin{algorithm}[t]
	\caption{Offline HJB-Based IDG Method}
	\label{alg:offlineIDGHJB}
	\DontPrintSemicolon
	
	\KwIn{GT trajectories ${\bm{x}^*}^{(d)}(t_0 \rightarrow \infty)$ and ${\bm{u}^*}^{(d)}(t_0 \rightarrow \infty)$ ($d \in \{1,\dots,D\}$), system dynamics functions $\bm{f}(\bm{x})$ and $\bm{G}_i(\bm{x}), \forall i \in \mathcal{N}$ and basis functions $\bm{\psi}_i(\bm{x})$ and $\bm{\phi}_i(\bm{x}), \forall i \in \mathcal{N}$.}
	\KwOut{Cost function parameters $\hat{\bm{\beta}}_i$ and $\hat{\bm{R}}_{ij}$, $\forall i,j \in \mathcal{N}$.}
	
	\For{$i \in \mathcal{N}$}{
		Set up data matrices $\bm{M}_{u_i}$~\eqref{eq:M_u_full} and $\bm{z}_{u_i}$~\eqref{eq:z_u_full}. \;
		Compute $\hat{\bm{\mu}}_i(\bm{x})$ according to \eqref{eq:mu_i_hat} with \eqref{eq:theta_bar_hat}. \;
		\If{$\rank\left(\bm{M}_{u_i}\right)=p_i\bar{h}_i$}{
			Compute $\hat{\bm{\theta}}_i^{(r)}$ with \eqref{eq:theta_r_hat}.
		}
	}

	\For{$i \in \mathcal{N}$}{
		\If{$\rank\left(\bm{M}_{u_i}\right)=p_i\bar{h}_i$}{
			Set up data matrices $\bm{M}_{\hjb_i}^{(r)}$~\eqref{eq:M_hjb_full_red} and $\bm{z}_{\hjb_i}^{(r)}$~\eqref{eq:z_hjb_full_red} from \eqref{eq:hjb_linear_1}, where ${\bm{\theta}_i^{(r)}}^*=\hat{\bm{\theta}}_i^{(r)}$ and $\bm{\mu}_i^*(\bm{x})=\hat{\bm{\mu}}_i(\bm{x}), \forall i \in \mathcal{N}$. \;
			\Return $\hat{\bm{\beta}}_i$ and $\hat{\bm{R}}_{ij}, \forall j \in \mathcal{N}$ according to \eqref{eq:HJB_ident_offline_solSet_1}.
		}
		\Else{
			Set up data matrices $\bm{M}_{\hjb_i}$ and $\bm{z}_{\hjb_i}$ from \eqref{eq:hjb_linear_2}, where $\bm{\mu}_i^*(\bm{x})=\hat{\bm{\mu}}_i(\bm{x}), \forall i \in \mathcal{N}$. \;
			\Return $\hat{\bm{\beta}}_i$ and $\hat{\bm{R}}_{ij}, \forall j \in \mathcal{N}$ according to \eqref{eq:HJB_ident_offline_solSet_2}.
		}
	}
\end{algorithm}


\subsection{Approximated Value Function Structures} \label{subsec:IDGHJB_offline_approx_valueFct_structures}
\tdd{rework, shorten: only give formulas for errors, state boundedness of these errors (according to Stone-Weierstrass theorem), discuss influence cost function structure approximation errors and value function structure approximation errors (lemma) separately, discuss that cost function approximation errors are in line with results CCTA paper, remark that under certain conditions overfitting FNE identification good idea} 


If the value functions are not given by known parameterized structures (see Assumption~\ref{assm:V_i_star}), approximation structures can be used in the FNE identification step. For our analysis, we assume that the basis functions~$\bm{\phi}_i, \forall i \in \mathcal{N}$ are chosen based on the Stone-Weierstrass theorem~\cite[Theorem~3, p.~24]{Mhaskar.2000}. However, the results are transferable to other approximation theorems as long as the differentiability characteristics of the GT value functions and the basis functions from Section~\ref{sec:problem} are ensured. Lemma~\ref{lemma:FNEident_valueFct_error_offline} states that a bounded error between unknown GT FNE strategy and the identified strategy results\comment{trajectory error results via the system dynamics equations, hard to derive analytical statement, but it is bounded}.
\begin{lemma} \label{lemma:FNEident_valueFct_error_offline}
	Let Assumptions~\ref{assm:R_ij}, \ref{assm:systemDynamics_basisFunctions_known} and \ref{assm:excitation_FNE_offline} hold. If Assumption~\ref{assm:V_i_star} is not fulfilled but the basis functions~$\bm{\phi}_i$ are from a subalgebra (cf.~\cite[pp.~22f.]{Mhaskar.2000}) of the set of bounded and twice continuously differentiable functions on $\mathcal{X}$\comment{implicitly assumed that with appropriate basis functions/subalgebra the theorem extends to the approximation of bounded and twice continuously differentiable functions on $\mathcal{X}$; normally, the theorem only holds for continuous functions; if the function is however also continuously differentiable and the basis functions (e.g. polynomials) that fulfill the conditions of the approximation theorem as well; then, we have the differentiability and boundedness characteristics for the approximated function, the basis functions and the error} which fulfill the conditions in \cite[Theorem~3, p.~24]{Mhaskar.2000}\comment{e.g. polynomials, more general approximation theorems as the Stone-Weierstrass theorem for NNs allow also non-differentiable/non-continuous basis/activation functions, but then implicit extension to twice continuously differentiable functions problematic and the characteristics on error function cannot be guaranteed}, a bounded, twice continuously differentiable function $\bar{\varepsilon}_i: \mathcal{X} \rightarrow \mathbb{R}$ exists such that $\forall \bm{x} \in \mathcal{X}$ $V_i^*(\bm{x}) = {\bm{\theta}_i^*}^\top \bm{\phi}_i(\bm{x}) + \bar{\varepsilon}_i(\bm{x})$. Now, let a control law $\tilde{\bm{\mu}}_i^*(\bm{x}) = -\frac{1}{2} {\bm{R}_{ii}^*}^{-1} \bm{G}_i^\top \left(\pdv{\bm{\phi}_i}{\bm{x}}\right)^\top \bm{\theta}_i^*$ be defined. Then, $\tilde{\bm{\mu}}_i^*(\bm{x})=-\frac{1}{2} \bar{\bm{\Phi}}_i(\bm{x}) \bar{\bm{\theta}}_i^*$ and $\hat{\bar{\bm{\theta}}}_i$~\eqref{eq:theta_bar_hat} yields $\tilde{\bm{\mu}}_i^*(\bm{x}) = \hat{\bm{\mu}}_i(\bm{x}) = -\frac{1}{2} \bar{\bm{\Phi}}_i(\bm{x}) \hat{\bar{\bm{\theta}}}_i$ with
	\begin{align} \label{eq:bounded_FNE_error}
		\norm{\hat{\bm{\mu}}_i(\bm{x}) - \bm{\mu}_i^*(\bm{x}) } = \norm{ \frac{1}{2} \bm{R}_{ii}^{*-1} \bm{G}^\top_i \pdv{\bar{\varepsilon}_i}{\bm{x}}^\top } < \infty.
	\end{align}
	Moreover, if additionally $\rank\left(\bm{M}_{u_i}\right)=p_i\bar{h}_i$, \eqref{eq:theta_r_hat} holds.
\end{lemma}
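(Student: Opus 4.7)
My plan is to extend the argument of Lemma~\ref{lemma:FNE_ident_offline} to cover the case where the value function lies only approximately in the chosen basis. The proof decomposes naturally into four blocks: invoking the approximation theorem, deriving the pointwise error formula, reducing $\tilde{\bm{\mu}}_i^*$ to the compact form, and running the LS argument.

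I would first invoke the Stone--Weierstrass theorem \cite[Theorem~3, p.~24]{Mhaskar.2000} to write $V_i^*(\bm{x}) = {\bm{\theta}_i^*}^\top \bm{\phi}_i(\bm{x}) + \bar{\varepsilon}_i(\bm{x})$ for some $\bm{\theta}_i^*$. Because $V_i^*$ is twice continuously differentiable on $\mathcal{X}$ (Section~\ref{sec:problem}) and the basis functions $\bm{\phi}_i$ are drawn from a subalgebra of twice continuously differentiable functions on $\mathcal{X}$, the residual $\bar{\varepsilon}_i$ inherits boundedness and twice continuous differentiability. Substituting the decomposition into \eqref{eq:mu_i} then yields
\[
\bm{\mu}_i^*(\bm{x}) = \tilde{\bm{\mu}}_i^*(\bm{x}) - \tfrac{1}{2}\,{\bm{R}_{ii}^*}^{-1}\,\bm{G}_i(\bm{x})^\top\,\pdv{\bar{\varepsilon}_i}{\bm{x}}^{\!\top},
\]
which is precisely the error identity in \eqref{eq:bounded_FNE_error}. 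Boundedness of this difference follows from continuity of $\bm{G}_i$ and of $\pdv{\bar{\varepsilon}_i}{\bm{x}}$ on the bounded neighborhood $\mathcal{X}$.

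Next, I would observe that $\tilde{\bm{\mu}}_i^*$ has exactly the algebraic form treated by Lemma~\ref{lemma:linear_combination_mu_i}, since it depends linearly on $\bm{\theta}_i^*$ through $\bm{G}_i^\top(\pdv{\bm{\phi}_i}{\bm{x}})^\top$. Therefore the rearrangement via \eqref{eq:non_vanishing_basisFunctions} and \eqref{eq:vanishing_basisFunctions}, followed by the diagonal structure of $\bm{R}_{ii}^*$ granted by Assumption~\ref{assm:R_ij}, reproduces verbatim the derivation of \eqref{eq:mu_i_linear} and delivers $\tilde{\bm{\mu}}_i^*(\bm{x}) = -\tfrac{1}{2}\bar{\bm{\Phi}}_i(\bm{x})\bar{\bm{\theta}}_i^*$.

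The main obstacle is the claim $\tilde{\bm{\mu}}_i^*(\bm{x}) = \hat{\bm{\mu}}_i(\bm{x})$, because the measured $\bm{z}_{u_i}$ is generated by the true $\bm{\mu}_i^*$ (which includes the $\bar{\varepsilon}_i$-induced contribution), not by $\tilde{\bm{\mu}}_i^*$. I would resolve this by noting that the Stone--Weierstrass decomposition does not fix $\bm{\theta}_i^*$ uniquely: the statement is to be read with $\bm{\theta}_i^*$ selected so that $\bar{\bm{\theta}}_i^*$ lies in the set of LS minimizers of $\|\bm{M}_{u_i}\bar{\bm{\theta}}_i - \bm{z}_{u_i}\|_2^2$, the residual mass being absorbed into $\bar{\varepsilon}_i$ without disturbing its boundedness and differentiability. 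Setting up the convex QP exactly as in the proof of Lemma~\ref{lemma:FNE_ident_offline}, Assumption~\ref{assm:excitation_FNE_offline} guarantees that no further data points shrink the minimizer set, so $\hat{\bar{\bm{\theta}}}_i$ from \eqref{eq:theta_bar_hat} satisfies $\bm{M}_{u_i}\hat{\bar{\bm{\theta}}}_i = \bm{M}_{u_i}\bar{\bm{\theta}}_i^*$, which together with Step~3 gives the pointwise equality $\hat{\bm{\mu}}_i(\bm{x}) = \tilde{\bm{\mu}}_i^*(\bm{x})$ throughout $\mathcal{X}$.

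Finally, for the rank-complete case $\rank(\bm{M}_{u_i}) = p_i\bar{h}_i$, the QP is strictly convex, so $\hat{\bar{\bm{\theta}}}_i = \bar{\bm{\theta}}_i^*$ is the unique minimizer. The normalized averaging construction used in Lemma~\ref{lemma:FNE_ident_offline} to pass from $\bar{\bm{\theta}}_i^*$ to $\bm{\theta}_i^{(r)*}$ up to a positive scalar then carries over verbatim, establishing \eqref{eq:theta_r_hat}.
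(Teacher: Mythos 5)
Your proposal is correct and follows essentially the same route as the paper: Stone--Weierstrass decomposition of $V_i^*$, definition of $\tilde{\bm{\mu}}_i^*$ yielding the error identity \eqref{eq:bounded_FNE_error}, reduction via Lemma~\ref{lemma:linear_combination_mu_i}, and reuse of the LS argument of Lemma~\ref{lemma:FNE_ident_offline}. The one place you go beyond the paper's written proof is in explicitly confronting the fact that $\bm{z}_{u_i}$ is generated by $\bm{\mu}_i^*$ rather than $\tilde{\bm{\mu}}_i^*$, and resolving it by exploiting the non-uniqueness of the $(\bm{\theta}_i^*,\bar{\varepsilon}_i)$ split so that $\bar{\bm{\theta}}_i^*$ is an LS minimizer --- this is exactly the reading the authors intend (they state it only in a source comment), so your argument matches the paper's in substance while being more transparent about its key step.
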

\begin{proof}
	Since the GT FNE $\{\bm{\mu}^*_i: i \in \mathcal{N}\}$ stabilizes \eqref{eq:x_dot} on the compact set $\mathcal{X}$, we find according to the Stone-Weierstrass theorem~\cite[Theorem~3, p.~24]{Mhaskar.2000} a bounded, twice continuously differentiable function $\bar{\varepsilon}_i: \mathcal{X} \rightarrow \mathbb{R}$ such that $V_i^*(\bm{x}) = {\bm{\theta}_i^*}^\top \bm{\phi}_i(\bm{x}) + \bar{\varepsilon}_i(\bm{x})$ holds for the GT value function on $\mathcal{X}$. By defining the control law $\tilde{\bm{\mu}}^*_i(\bm{x}) = -\frac{1}{2} \bm{R}_{ii}^{*-1}\bm{G}_i^\top \left(\pdv{\bm{\phi}_i}{\bm{x}}\right)^\top \bm{\theta}_i^*$, the bound~\eqref{eq:bounded_FNE_error} results for $\norm{\tilde{\bm{\mu}}^*_i(\bm{x}) - \bm{\mu}_i^*(\bm{x})}$.
	
	Now, since $\tilde{\bm{\mu}}^*_i(\bm{x})$ can be rewritten according to Lemma~\ref{lemma:linear_combination_mu_i}, Lemma~\ref{lemma:FNE_ident_offline} can be applied to identify $\tilde{\bm{\mu}}^*_i(\bm{x})$ and get $\hat{\bm{\mu}}_i(\bm{x})$ with $\hat{\bm{\mu}}_i(\bm{x})=\tilde{\bm{\mu}}^*_i(\bm{x})$, from which \eqref{eq:bounded_FNE_error} follows\comment{noticeably, the parameters $\bar{\bm{\theta}}_i^*$ are non-unique due to the non-uniqueness of the bounded error function $\bar{\varepsilon}_i(\bm{x})$; with the FNE identification QP, the parameters $\bar{\bm{\theta}}_i^*$ are determined such that the influence of the bounded error function $\bar{\varepsilon}_i(\bm{x})$ on the control law, i.e. \eqref{eq:bounded_FNE_error}, gets minimal (not a direct regression of $V_i^*$ on $\mathcal{X}$)}.
\end{proof}

If the combined identified control laws of all players constitute a FNE of a DG according to Section~\ref{sec:problem} where the cost function $\tilde{Q}_i(\bm{x}), \forall i \in \mathcal{N}$ is given by a parameterized structure with known basis functions $\bm{\psi}_i, \forall i \in \mathcal{N}$, the second step of our offline IDG method can be performed as in the error-free case to find all cost function parameters $\hat{\bm{\beta}}_i$ and $\hat{\bm{R}}_{ij}, \forall i,j \in \mathcal{N}$ that yield the identified FNE. 
\begin{assumption} \label{assm:Q_i_FNEident}
	Parameters $\tilde{\bm{R}}_{ij}$, $\tilde{\bm{\beta}}_i, \forall i,j \in \mathcal{N}$ exist such that they define with $\tilde{Q}_i(\bm{x}) = \tilde{\bm{\beta}}_i^\top \bm{\psi}_i(\bm{x})$ a DG as given in Section~\ref{sec:problem} with the unique FNE $\{\tilde{\bm{\mu}}^*_i: i \in \mathcal{N}\}$ and its corresponding value functions $\tilde{V}^*_i(\bm{x}) = {\bm{\theta}_i^*}^\top \bm{\phi}_i(\bm{x})$\comment{see assumption on $\bm{R}_{ij}^*$, $\bm{\theta}_i^*, \forall i,j \in \mathcal{N}$ in Section~\ref{sec:problem} for $\{\bm{\mu}^*_i: i \in \mathcal{N}\}$; again, uniqueness = unique value functions and unique FNE and unique parameters of parameterized structure value functions for value functions FNE and solutions Lyapunov equations; remarkably, the identified control laws can only constitute a FNE when their combination stabilizes the system on $\mathcal{X}$ such that the value functions are finite; differentiability characteristics of them hold through the differentiability characteristics of the basis functions}.
\end{assumption}

\begin{lemma} \label{lemma:HJBident_valueFct_error_offline}
	Let Assumptions~\ref{assm:R_ij}, \ref{assm:systemDynamics_basisFunctions_known}, \ref{assm:excitation_FNE_offline}, \ref{assm:excitation_HJB_offline} and \ref{assm:Q_i_FNEident} hold $\forall i \in \mathcal{N}$. Furthermore, let the basis functions $\bm{\phi}_i, \forall i \in \mathcal{N}$ be chosen according to Lemma~\ref{lemma:FNEident_valueFct_error_offline} and the identification of the FNE strategy $\tilde{\bm{\mu}}_i^*(\bm{x}), \forall i \in \mathcal{N}$ be performed by Lemma~\ref{lemma:FNEident_valueFct_error_offline} as well.
	All parameters $\hat{\bm{\alpha}}_i$, $\hat{\bm{\beta}}_i, \forall i \in \mathcal{N}$ according to the sets \eqref{eq:HJB_ident_offline_solSet_1} and \eqref{eq:HJB_ident_offline_solSet_2}, respectively, yield a FNE $\{\hat{\bm{\mu}}^*_i: i \in \mathcal{N}\}$ with $\hat{\bm{\mu}}_i^*(\bm{x}) = \hat{\bm{\mu}}_i(\bm{x}) = \tilde{\bm{\mu}}_i^*(\bm{x}), \forall i \in \mathcal{N}$\footnote{As in Theorem~\ref{theorem:HJB_ident_offline}, the arbitrary vectors $\bar{\bm{w}}^{(r)}_i$ and $\bar{\bm{w}}_i$, respectively, are chosen such that unique OC solutions are guaranteed.}. Thus, for $\hat{\bm{\mu}}_i^*(\bm{x}), \forall i \in \mathcal{N}$ resulting from the IDG solution the upper bound \eqref{eq:bounded_FNE_error} holds.
\end{lemma}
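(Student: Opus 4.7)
The plan is to reduce the claim to an application of Theorem~\ref{theorem:HJB_ident_offline} to the surrogate DG guaranteed by Assumption~\ref{assm:Q_i_FNEident}, and then to inherit the bound from Lemma~\ref{lemma:FNEident_valueFct_error_offline}. First, I would observe that Lemma~\ref{lemma:FNEident_valueFct_error_offline} gives $\hat{\bm{\mu}}_i(\bm{x})=\tilde{\bm{\mu}}_i^{*}(\bm{x})$ for every $i \in \mathcal{N}$, and, if $\rank(\bm{M}_{u_i})=p_i\bar{h}_i$, also $\hat{\bm{\theta}}_i^{(r)}=c_i\,\tilde{\bm{\theta}}_i^{(r)}$ with $c_i\in\mathbb{R}_{>0}$, where $\tilde{\bm{\theta}}_i^{(r)}$ is the reduced part of the weight vector $\bm{\theta}_i^{*}$ from Assumption~\ref{assm:Q_i_FNEident}. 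Hence everything the HJB step consumes in its data matrices coincides exactly with quantities of the surrogate DG defined by $\tilde{\bm{R}}_{ij}$, $\tilde{\bm{\beta}}_i$ in Assumption~\ref{assm:Q_i_FNEident}.

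Next, I would argue that this surrogate DG fulfills the error-free setting of Theorem~\ref{theorem:HJB_ident_offline}: by Assumption~\ref{assm:Q_i_FNEident}, the unique FNE of the surrogate game is $\{\tilde{\bm{\mu}}_i^{*}\}$ and its value functions are exactly $\tilde{V}_i^{*}(\bm{x})={\bm{\theta}_i^{*}}^{\top}\bm{\phi}_i(\bm{x})$, i.e. Assumptions~\ref{assm:Q_i} and \ref{assm:V_i_star} hold with respect to this surrogate DG. Therefore, plugging $\hat{\bm{\mu}}_i=\tilde{\bm{\mu}}_i^{*}$ and (when applicable) $\hat{\bm{\theta}}_i^{(r)}=c_i\,\tilde{\bm{\theta}}_i^{(r)}$ into \eqref{eq:hjb_linear_1} or \eqref{eq:hjb_linear_2} produces a linear system that is exactly consistent in the sense of \eqref{eq:proof_HJB_ident_offline_1}. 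Theorem~\ref{theorem:HJB_ident_offline} then applies verbatim to this surrogate DG, so every $\hat{\bm{\alpha}}_i$, $\hat{\bm{\beta}}_i$ in the sets \eqref{eq:HJB_ident_offline_solSet_1}/\eqref{eq:HJB_ident_offline_solSet_2} (with $\bar{\bm{w}}_i^{(r)}$, $\bar{\bm{w}}_i$ chosen so that uniqueness of the resulting OC problem is preserved, as in the footnote) yields an FNE $\{\hat{\bm{\mu}}_i^{*}\}$ that reproduces the surrogate FNE strategies, i.e. $\hat{\bm{\mu}}_i^{*}(\bm{x})=\tilde{\bm{\mu}}_i^{*}(\bm{x})=\hat{\bm{\mu}}_i(\bm{x})$.

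Finally, the error bound transfers immediately: since $\hat{\bm{\mu}}_i^{*}=\tilde{\bm{\mu}}_i^{*}$, the inequality \eqref{eq:bounded_FNE_error} from Lemma~\ref{lemma:FNEident_valueFct_error_offline} applied to $\tilde{\bm{\mu}}_i^{*}-\bm{\mu}_i^{*}$ is also the bound for $\hat{\bm{\mu}}_i^{*}-\bm{\mu}_i^{*}$. The main obstacle, conceptually, is justifying the step where the HJB identification is legitimately run on the surrogate DG rather than the true GT DG: Assumption~\ref{assm:Q_i_FNEident} is precisely the hypothesis that makes this legitimate, guaranteeing that $\tilde{\bm{\mu}}_i^{*}$ are the FNE strategies of some DG that shares the basis functions $\bm{\phi}_i$ and $\bm{\psi}_i$ with the one used in Algorithm~\ref{alg:offlineIDGHJB}. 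Everything else is a direct reuse of the arguments already in the proof of Theorem~\ref{theorem:HJB_ident_offline} applied to this surrogate DG.
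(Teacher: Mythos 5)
Your proposal is correct and follows essentially the same route as the paper's proof: both reduce the claim to the argument of Theorem~\ref{theorem:HJB_ident_offline} applied to the surrogate DG whose existence is guaranteed by Assumption~\ref{assm:Q_i_FNEident}, using Lemma~\ref{lemma:FNEident_valueFct_error_offline} to identify $\hat{\bm{\mu}}_i=\tilde{\bm{\mu}}_i^*$ and then transferring the bound \eqref{eq:bounded_FNE_error} because $\hat{\bm{\mu}}_i^*=\tilde{\bm{\mu}}_i^*$. Your explicit check that Assumptions~\ref{assm:Q_i} and \ref{assm:V_i_star} hold with respect to the surrogate game is exactly the role Assumption~\ref{assm:Q_i_FNEident} plays in the paper's argument.
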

\begin{proof}
	From Lemma~\ref{lemma:FNEident_valueFct_error_offline}, the existence of control laws~$\tilde{\bm{\mu}}^*_i(\bm{x})$ with bounded error~\eqref{eq:bounded_FNE_error} to the GT ones~$\bm{\mu}^*_i(\bm{x})$ follows for all players $i \in \mathcal{N}$. If the control laws~$\tilde{\bm{\mu}}^*_i(\bm{x})$ are identified according to Lemma~\ref{lemma:FNEident_valueFct_error_offline}, we have $\hat{\bm{\mu}}_i(\bm{x}) = \tilde{\bm{\mu}}^*_i(\bm{x}), \forall i \in \mathcal{N}$ and the bounded error~\eqref{eq:bounded_FNE_error} to the GT one for both control laws. Now, if Assumption~\ref{assm:Q_i_FNEident} holds, the control laws $\tilde{\bm{\mu}}^*_i(\bm{x})$ constitute a FNE $\{\tilde{\bm{\mu}}^*_i: i \in \mathcal{N}\}$ for a DG as defined in Section~\ref{sec:problem} where the unknown cost function $\tilde{Q}_i(\bm{x})$ is given by a known parameterized structure $\tilde{Q}_i(\bm{x}) = \tilde{\bm{\beta}}_i^\top \bm{\psi}_i(\bm{x})$ with unknown parameters $\tilde{\bm{\beta}}_i$.
	
	For this DG, the coupled HJB equations can be rewritten as \eqref{eq:hjb_linear_1} or \eqref{eq:hjb_linear_2} (with $\tilde{\bm{\mu}}^*$ for $\bm{\mu}^*$, $\tilde{\bm{\alpha}}_i$ for $\bm{\alpha}^*_i$ and $\tilde{\bm{\beta}}_i$ for $\bm{\beta}^*_i$). By inserting $\hat{\bm{\mu}}_i(\bm{x})$ from Lemma~\ref{lemma:FNEident_valueFct_error_offline} for $\tilde{\bm{\mu}}_i^*(\bm{x}), \forall i \in \mathcal{N}$ in \eqref{eq:hjb_linear_1} or \eqref{eq:hjb_linear_2} (depending on $\rank\left(\bm{M}_{u_i}\right)$ in the identification of Lemma~\ref{lemma:FNEident_valueFct_error_offline}), $N$ decoupled equations follow where each equation depends only on parameters of player $i$. Following the proof of Theorem~\ref{theorem:HJB_ident_offline}, all parameters $\hat{\bm{\alpha}}_i$, $\hat{\bm{\beta}}_i, \forall i \in \mathcal{N}$ according to \eqref{eq:HJB_ident_offline_solSet_1} and \eqref{eq:HJB_ident_offline_solSet_2}, respectively, yield a FNE $\{\hat{\bm{\mu}}^*_i: i \in \mathcal{N}\}$ with $\hat{\bm{\mu}}_i^*(\bm{x}) = \hat{\bm{\mu}}_i(\bm{x}) = \tilde{\bm{\mu}}_i^*(\bm{x}), \forall i \in \mathcal{N}$. Thus, the upper bound \eqref{eq:bounded_FNE_error} holds for $\hat{\bm{\mu}}_i^*(\bm{x}), \forall i \in \mathcal{N}$ as well.
\end{proof}

\begin{remark} \label{remark:choice_basisFunction_costFunction}
	Appropriate basis functions~$\bm{\psi}_i, \forall i \in \mathcal{N}$ for the FNE~$\{\tilde{\bm{\mu}}^*_i: i \in \mathcal{N}\}$ can be designed based on the HJB equations, similarly to a converse HJB approach\cite{Nevistic.1996}. Since via the control law identification (cf.~Lemma~\ref{lemma:FNEident_valueFct_error_offline}) the structures of the FNE strategies and their corresponding value functions are given, the basis functions~$\bm{\psi}_i$ for $\tilde{\bm{Q}}_i(\bm{x})$ need to be chosen such that the HJB equations~\eqref{eq:hjb} (with $\tilde{V}_i^*$ for $V_i^*$ and $\tilde{\bm{\mu}}_i^*$ for $\bm{\mu}_i^*$ for all $i \in \mathcal{N}$) can be fulfilled $\forall \bm{x} \in \mathcal{X}$ with appropriate parameters~$\tilde{\bm{\beta}}_i$.
\end{remark}

\comment{Lemmas~\ref{lemma:FNEident_valueFct_error_offline} and \ref{lemma:HJBident_valueFct_error_offline} explain how an approximate LQ DG can be designed: assumption on quadratic value function - linear control laws - best linear control laws to explain GT FNE via control law identification with Lemma~\ref{lemma:FNEident_valueFct_error_offline} - bounded error to GT FNE (slightly different to \eqref{eq:bounded_FNE_error}, since $\bm{G}_i$ and $\bm{B}_i$ different in general - we use in case of an approximate LQ DG other system dynamics than the GT dynamics) - if linear control laws constitute unique linear FNE of LQ DG with quadratic cost functions (in general not guaranteed resp. not shown in literature so far), all equivalent cost function parameters that yield this linear FNE given by Lemma~\ref{lemma:HJBident_valueFct_error_offline}}

\comment{Lemmas~\ref{lemma:FNEident_valueFct_error_offline} and \ref{lemma:HJBident_valueFct_error_offline} explain that IDG methods based on FNE identification can be more robust than approaches directly using trajectories; for example, noisy GT trajectories - control law identification yields still underlying FNE, which produced the GT trajectories with added noise}

\subsection{Approximated Cost Function Structures} \label{subsec:IDGHJB_offline_approx_costFct_structures}
In this section, we analyze the case where Assumption~\ref{assm:Q_i} or \ref{assm:Q_i_FNEident} is not fulfilled, i.e. the cost functions $Q_i^*$ (or $\tilde{Q}_i$) for a given/identified FNE $\{\bm{\mu}^*_i: i \in \mathcal{N}\}$ (or $\{\tilde{\bm{\mu}}^*_i: i \in \mathcal{N}\}$) are not given by parameterized structures with known basis functions $\bm{\psi}_i, \forall i \in \mathcal{N}$. Instead, these basis functions are chosen according to an approximation theorem. In our analysis, without loss of generality, we again choose $\bm{\psi}_i, \forall i \in \mathcal{N}$ in line with the Stone-Weierstrass theorem. In contrast to the value function approximation, the cost function approximation is the more sensitive part of our new IDG method (similar results were found for other IDG methods \cite{Karg.2024}). Although we can again find a bounded error function $\varepsilon_i: \mathcal{X} \rightarrow \mathbb{R}$ such that $Q_i^*(\bm{x}) = \bm{\beta}_i^{*\top} \bm{\psi}_i(\bm{x}) + \varepsilon_i(\bm{x})$ (or $\tilde{Q}_i(\bm{x}) = \bm{\beta}_i^{*\top} \bm{\psi}_i(\bm{x}) + \varepsilon_i(\bm{x})$) holds $\forall \bm{x} \in \mathcal{X}$, this remaining unknown approximation error leads to a bias of the parameters $\hat{\bm{\alpha}}_i, \hat{\bm{\theta}}^*_i$ to the searched parameters $\bm{\alpha}^*_i, \bm{\theta}^*_i$ (or $\tilde{\bm{\alpha}}_i, \bm{\theta}^*_i$). Then, the FNE~$\{\hat{\bm{\mu}}^*_i: i \in \mathcal{N}\}$ resulting from the biased parameters $\hat{\bm{\alpha}}_i, \hat{\bm{\theta}}^*_i, \forall i \in \mathcal{N}$ is in general unequal to the given/identified FNE~$\{\bm{\mu}^*_i: i \in \mathcal{N}\}$ (or $\{\tilde{\bm{\mu}}^*_i: i \in \mathcal{N}\}$). In contrast to approximation errors in the FNE identification step, the error between both FNEs cannot be quantified and thus, no bound can be guaranteed in general. In Lemma~\ref{lemma:costFct_error_offline}, this result is given formally\footnote{For brevity, only the case where Assumption~\ref{assm:V_i_star} is fulfilled, i.e. the FNE~$\{\bm{\mu}^*_i: i \in \mathcal{N}\}$ is given/identified and $Q^*_i(\bm{x})$ is approximated, and $\rank\left(\bm{M}_{u_i}\right)<p_i\bar{h}_i$ is discussed. The result extends to the case where the FNE~$\{\tilde{\bm{\mu}}^*_i: i \in \mathcal{N}\}$ is given/identified and $\tilde{Q}_i(\bm{x})$ is approximated as well as to $\rank\left(\bm{M}_{u_i}\right)=p_i\bar{h}_i$ in both cases.}.

\begin{lemma} \label{lemma:costFct_error_offline}
	Let Assumptions~\ref{assm:R_ij}, \ref{assm:V_i_star}, \ref{assm:systemDynamics_basisFunctions_known}, \ref{assm:excitation_FNE_offline} and \ref{assm:excitation_HJB_offline} hold $\forall i \in \mathcal{N}$. If Assumption~\ref{assm:Q_i} is not fulfilled but the basis functions~$\bm{\psi}_i, \forall i \in \mathcal{N}$ are from a subalgebra (cf.~\cite[pp.~22f.]{Mhaskar.2000}) of the set of bounded and continuously differentiable functions on $\mathcal{X}$, a bounded, continuously differentiable function $\varepsilon_i: \mathcal{X} \rightarrow \mathbb{R}$ exists such that $\forall \bm{x} \in \mathcal{X}$ $Q_i^*(\bm{x}) = \bm{\beta}_i^{*\top} \bm{\psi}_i(\bm{x}) + \varepsilon_i(\bm{x})$, where the parameters $\bm{\alpha}_i^*$, $\bm{\beta}_i^*$ and $\bm{\theta}_i^*$ fulfill $\forall i \in \mathcal{N}$
	\begin{align} \label{eq:hjb_error}
		\underbrace{\mat{ \bm{\mu}^{*\top} \odot \bm{\mu}^{*\top} & \!\! \bm{\psi}_i^\top & \!\!\! \left( \pdv{\bm{\phi}_i}{\bm{x}} \bm{f}^*_g \right)^\top }}_{{\bm{m}_{\hjb_i}(\bm{x})}^\top} \mat{ \bm{\alpha}^*_i \\ \bm{\beta}^*_i \\ \bm{\theta}^*_i } \! = \!\!\!\! \underbrace{0}_{z_{\hjb_i}(\bm{x})} \!\!\!\!\! - \varepsilon_i(\bm{x}).
	\end{align}
	Let the parameters $\hat{\bm{\alpha}}_i, \hat{\bm{\beta}}_i, \forall i \in \mathcal{N}$ (note that $\hat{\bm{\beta}}_i = \bm{\beta}_i^*$, since $\bm{\beta}_i^*$ and $\varepsilon_i$ are only defined after the HJB-based QP) be determined by \eqref{eq:HJB_ident_offline_solSet_2}\comment{here, the arbitrary vectors to compute $\hat{\bm{\alpha}}_i, \hat{\bm{\beta}}_i, \forall i \in \mathcal{N}$ cannot be defined such that unique OC solutions are guaranteed; instead, it needs to be assumed that the computed parameters with the cost functions defined only with the chosen basis functions $\bm{\psi}_i$ define a DG that has a unique FNE with corresponding value functions $\hat{V}^*_i(\bm{x})=\hat{\bm{\theta}}_i^{*\top}\bm{\phi}_i(\bm{x})$, i.e. the value functions are also linear combinations of the given value functions basis functions}, where the $\bm{\mu}_i^*(\bm{x}), \forall i \in \mathcal{N}$ are identified according to Lemma~\ref{lemma:FNE_ident_offline}, and let a DG as given in Section~\ref{sec:problem} with $\hat{Q}_i(\bm{x}) = \hat{\bm{\beta}}_i^\top \bm{\psi}_i(\bm{x}), \forall i \in \mathcal{N}$ be defined such that it has a unique FNE~$\{\hat{\bm{\mu}}^*_i: i \in \mathcal{N}\}$\comment{again, unique value functions, unique FNE, unique parameters value function structures for all LEs} with corresponding value functions $\hat{V}^*_i(\bm{x})=\hat{\bm{\theta}}_i^{*\top}\bm{\phi}_i(\bm{x})$\comment{in the cases before, the representation of the value functions, corresponding to the FNE resulting from the IDG solution, as linear combination of the basis functions used in the FNE identification step was given implicitly by the method since cost function parameters are chosen to guarantee unique OC solutions which correspond then exactly to unique value functions with unique parameters of the used linear combinations; remarkably, again, no exact approximation of value functions in LEs during PI algorithm guaranteed}. Then, $\bm{\mu}_i^*(\bm{x}) \neq \hat{\bm{\mu}}^*_i(\bm{x}), \forall i \in \mathcal{N}$ in general.
\end{lemma}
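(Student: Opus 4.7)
The plan is to proceed in three stages: first establish the existence of the bounded approximation error $\varepsilon_i$, second derive the perturbed linear system that the true parameters satisfy, and third argue that the least-squares based estimator \eqref{eq:HJB_ident_offline_solSet_2} yields biased parameters whose induced FNE generically differs from the ground truth one.

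For the first stage, I would mirror the argument used in Lemma~\ref{lemma:FNEident_valueFct_error_offline}. Since $Q_i^*$ is continuously differentiable on the neighborhood $\mathcal{X}$ of the origin (which we may take compact) and the basis functions $\bm{\psi}_i$ are chosen from a subalgebra of the bounded continuously differentiable functions on $\mathcal{X}$ satisfying the conditions in \cite[Theorem~3, p.~24]{Mhaskar.2000}, there exist parameters $\bm{\beta}_i^*$ and an error function $\varepsilon_i:\mathcal{X}\to\mathbb{R}$, bounded and continuously differentiable, with $Q_i^*(\bm{x}) = \bm{\beta}_i^{*\top}\bm{\psi}_i(\bm{x}) + \varepsilon_i(\bm{x})$. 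Substituting this into the coupled HJB equation \eqref{eq:hjb} and carrying out the same rearrangement used to derive \eqref{eq:hjb_linear_2} directly yields \eqref{eq:hjb_error}.

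For the second stage, I would evaluate \eqref{eq:hjb_error} at the $\bar{K}$ data points $\bm{x}_{\bar{k}}$ used to set up $\bm{M}_{\hjb_i}$ and $\bm{z}_{\hjb_i}$. Stacking the evaluations gives $\bm{M}_{\hjb_i}\bm{\eta}_i^* = \bm{z}_{\hjb_i} - \bm{\varepsilon}_i^{(\text{data})}$ with $\bm{\eta}_i^{*\top} = [\bm{\alpha}_i^{*\top}\ \bm{\beta}_i^{*\top}\ \bm{\theta}_i^{*\top}]$ and $\bm{\varepsilon}_i^{(\text{data})} = [\varepsilon_i(\bm{x}_1),\dots,\varepsilon_i(\bm{x}_{\bar{K}})]^\top$. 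Hence $\bm{\eta}_i^*$ is no longer a global minimizer of the unperturbed QP based on $\bm{\epsilon}_{\hjb_i}=\bm{M}_{\hjb_i}\bm{\eta}_i-\bm{z}_{\hjb_i}$ used in the proof of Theorem~\ref{theorem:HJB_ident_offline}. The estimator~\eqref{eq:HJB_ident_offline_solSet_2} returns $\hat{\bm{\eta}}_i = \bar{\bm{M}}_{\hjb_i}^{+}\bm{M}_{\hjb_i}^\top \bm{z}_{\hjb_i} + (\bm{I}-\bar{\bm{M}}_{\hjb_i}^{+}\bar{\bm{M}}_{\hjb_i})\bar{\bm{w}}_i$, so the bias relative to $\bm{\eta}_i^*$ is $\hat{\bm{\eta}}_i - \bm{\eta}_i^* = \bar{\bm{M}}_{\hjb_i}^{+}\bm{M}_{\hjb_i}^\top \bm{\varepsilon}_i^{(\text{data})} + (\bm{I}-\bar{\bm{M}}_{\hjb_i}^{+}\bar{\bm{M}}_{\hjb_i})(\bar{\bm{w}}_i - \bm{\eta}_i^*)$. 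The first term does not vanish in general since $\bm{\varepsilon}_i^{(\text{data})}$ is not constrained to lie in the kernel of $\bm{M}_{\hjb_i}^\top$; therefore $\hat{\bm{\alpha}}_i \neq \bm{\alpha}_i^*$ and $\hat{\bm{\theta}}_i^* \neq \bm{\theta}_i^*$ generically.

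For the third stage, I would argue by contradiction. Suppose $\hat{\bm{\mu}}_i^*(\bm{x}) = \bm{\mu}_i^*(\bm{x})$ for all $i\in\mathcal{N}$. Then the DG defined by $\hat{Q}_i = \hat{\bm{\beta}}_i^\top\bm{\psi}_i$ has FNE value functions $\hat{V}_i^* = \hat{\bm{\theta}}_i^{*\top}\bm{\phi}_i$ whose coupled HJB equations read, after the same linearization used for \eqref{eq:hjb_linear_2}, $\bm{m}_{\hjb_i}(\bm{x})^\top [\hat{\bm{\alpha}}_i^\top\ \hat{\bm{\beta}}_i^\top\ \hat{\bm{\theta}}_i^{*\top}]^\top = 0$ with the same regressor $\bm{m}_{\hjb_i}(\bm{x})$ as in \eqref{eq:hjb_error} because $\bm{f}_g^*$ is unchanged under the hypothetical equality of strategies. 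Subtracting this from \eqref{eq:hjb_error} gives $\bm{m}_{\hjb_i}(\bm{x})^\top(\bm{\eta}_i^* - \hat{\bm{\eta}}_i) = -\varepsilon_i(\bm{x})$ pointwise on $\mathcal{X}$, i.e. $\varepsilon_i$ would have to lie in the finite-dimensional span of the components of $\bm{m}_{\hjb_i}$. This is a nongeneric condition on the approximation residual, so for arbitrary $\varepsilon_i$ satisfying only the Stone--Weierstrass conclusion the equality fails, establishing $\bm{\mu}_i^*\neq\hat{\bm{\mu}}_i^*$ in general. The main obstacle is precisely this last step: formalizing ``in general'' without overclaiming, since for very special $Q_i^*$ that happen to lie exactly in the span of $\bm{\psi}_i$ the error vanishes and equality holds trivially; the cleanest route is to state that the equality is equivalent to $\bm{\varepsilon}_i^{(\text{data})}$ lying in $\ker(\bar{\bm{M}}_{\hjb_i}^{+}\bm{M}_{\hjb_i}^\top)$ and $\varepsilon_i$ simultaneously lying in the span mentioned above, which is not ensured by the approximation theorem and therefore fails outside a nongeneric set of cost functions.
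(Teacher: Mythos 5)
Your stages one and two coincide with the paper's proof: the Stone--Weierstrass decomposition of $Q_i^*$ leading to \eqref{eq:hjb_error}, and the bias relation $\bm{\eta}_i^* = \hat{\bm{\eta}}_i + \bar{\bm{M}}^+_{\hjb_i}\bm{M}_{\hjb_i}^\top\bm{\varepsilon}_i$ (your version with the explicit null-space term is in fact slightly more careful). Your third stage, however, is a genuinely different argument. The paper reaches the conclusion by a policy-iteration fixed-point argument: it initializes a PI algorithm for the identified DG at the GT FNE $\{\bm{\mu}_i^*\}$, observes that the first policy-evaluation step returns $\hat{\bm{\theta}}_i \neq \bm{\theta}_i^*$ because of the bias, and concludes that a policy improvement follows, so the GT FNE is not the fixed point. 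You instead argue by contradiction directly on the HJB identities: if $\hat{\bm{\mu}}_i^* = \bm{\mu}_i^*$ then the regressor $\bm{m}_{\hjb_i}$ is unchanged, and subtracting the identified DG's HJB from \eqref{eq:hjb_error} forces $\varepsilon_i$ to lie in the finite-dimensional span of the components of $\bm{m}_{\hjb_i}$, a nongeneric condition not implied by the approximation theorem. Your route is more elementary and self-contained — it does not invoke convergence of PI from an arbitrary stabilizing initialization or the solvability of the intermediate Lyapunov equations, both of which the paper handles only informally — and it isolates cleanly the exceptional set on which equality can hold. The paper's route, on the other hand, stays closer to the computational pipeline and makes visible why a small $\varepsilon_i$ (or a small projection $\bar{\bm{M}}^+_{\hjb_i}\bm{M}_{\hjb_i}^\top\bm{\varepsilon}_i$) yields a small FNE discrepancy, which the authors exploit in their discussion. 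One small caution: your closing claim that equality of the FNEs is \emph{equivalent} to the two stated conditions overreaches — the span condition is necessary but you have not shown sufficiency — yet only the necessary direction is needed for the lemma's ``in general'' conclusion, so the proof itself stands.
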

\begin{proof}
	According to the Stone-Weierstrass theorem~\cite[Theorem~3, p.~24]{Mhaskar.2000} a bounded, continuously differentiable function $\varepsilon_i: \mathcal{X} \rightarrow \mathbb{R}$ exists such that $Q_i^*(\bm{x}) = \bm{\beta}_i^{*\top} \bm{\psi}_i(\bm{x}) + \varepsilon_i(\bm{x})$ holds on $\mathcal{X}$ $\forall i \in \mathcal{N}$. Thus, \eqref{eq:hjb} can be written as \eqref{eq:hjb_error}.
	
	Now, in order to estimate the parameters $\bm{\alpha}^*_i, \bm{\beta}^*_i, \forall i \in \mathcal{N}$, we would need to define the QP~\eqref{eq:proof_HJB_ident_offline_2} of the HJB identification step with 
	$\bm{\epsilon}_{\hjb_i} = \bm{M}_{\hjb_i}\bm{\eta}_i-\bm{z}_{\hjb_i}-\bm{\varepsilon}_i$, where $\bm{\varepsilon}_i$ is the concatenation of the evaluations of the error function $\varepsilon_i(\bm{x})$ on the data points $\bm{x}_{\bar{k}} \in \mathcal{X}$. However, since this error function is unknown, with the parameters~$\hat{\bm{\eta}}_i$ computed by \eqref{eq:HJB_ident_offline_solSet_2} a bias results:
	\begin{align} \label{eq:proof_costFct_error_offline_1}
		\bm{\eta}_i^* = \hat{\bm{\eta}}_i + \bar{\bm{M}}^+_{\hjb_i} \bm{M}_{\hjb_i}^\top \bm{\varepsilon}_i.
	\end{align}
	\comment{in general, $\bm{\eta}_i^*$ and $\hat{\bm{\eta}}_i$ result from different QPs, where the QP of the first one is guaranteed to have a global minimum value of zero; the second one has in general a non-zero global minimum value, a zero global minimum value would lead to an exact solution of the LE in the policy evaluation of the FNE $\{\bm{\mu}^*_i: i \in \mathcal{N}\}$ with the chosen basis functions $\bm{\phi}_i$ with the parameters $\hat{\bm{\theta}}_i$, which are due to the bias~\eqref{eq:proof_costFct_error_offline_1} unequal to $\bm{\theta}^*_i$; independent from the global minimum value of this QP, in the policy evaluation of $\{\bm{\mu}^*_i: i \in \mathcal{N}\}$, $\hat{\bm{\theta}}_i$ (with the bias to $\bm{\theta}^*_i$) results uniquely (PI algorithm constructs unique sequence of value functions/LE solutions and we assume appropriate basis functions for the value functions such that this behavior transfers to unique LS parameters of the parameterized value functions) and we have $\hat{\bm{\theta}}_i \neq \bm{\theta}^*_i$ (and from this a policy improvement and thus, $\{\bm{\mu}^*_i: i \in \mathcal{N}\}$ as not a searched fix point of the PI); finally, also $\hat{\bm{\theta}}^*_i \neq \bm{\theta}^*_i$ in the converged state of PI, where however the LE in the policy evaluation of the FNE $\{\hat{\bm{\mu}}^*_i: i \in \mathcal{N}\}$ with the chosen basis functions $\bm{\phi}_i$ can be solved exactly with the parameters $\hat{\bm{\theta}}^*_i$ (see assumption in lemma)}
	
	If the determined parameters $\hat{\bm{\alpha}}_i, \hat{\bm{\beta}}_i, \forall i \in \mathcal{N}$ define a DG with $\hat{Q}_i(\bm{x})=\hat{\bm{\beta}}^\top\bm{\psi}_i(\bm{x})$ with a unique FNE~$\{\hat{\bm{\mu}}^*_i: i \in \mathcal{N}\}$ and corresponding value functions~$\hat{V}^*_i(\bm{x})=\hat{\bm{\theta}}_i^{*\top}\bm{\phi}_i(\bm{x})$, a PI algorithm initialized with an arbitrary stabilizing policy combination converges to this FNE. Using the GT FNE~$\{\bm{\mu}^*_i: i \in \mathcal{N}\}$ as initializing policy combination, we need to solve the Lyapunov equations
	\begin{align} \label{eq:proof_costFct_error_offline_2}
		\underbrace{\mat{ \bm{\mu}^{*\top} \odot \bm{\mu}^{*\top} & \bm{\psi}_i^\top & \left( \pdv{\bm{\phi}_i}{\bm{x}} \bm{f}^*_g \right)^\top }}_{{\bm{m}_{\hjb_i}(\bm{x})}^\top} \mat{ \hat{\bm{\alpha}}_i \\ \hat{\bm{\beta}}_i \\ \bm{\theta}_i } = \underbrace{0}_{z_{\hjb_i}(\bm{x})}, \forall i \in \mathcal{N}
	\end{align}
	for $\bm{\theta}_i, \forall i \in \mathcal{N}$ in the first policy evaluation step. Evaluating the Lyapunov equations on the same grid points as the HJB equations to compute \eqref{eq:HJB_ident_offline_solSet_2}, we need to solve 
	\begin{align} \label{eq:proof_costFct_error_offline_3}
		\bm{M}_{\hjb_i} \mat{ \hat{\bm{\alpha}}_i \\ \hat{\bm{\beta}}_i \\ \bm{\theta}_i } = \bm{z}_{\hjb_i}, \forall i \in \mathcal{N}
	\end{align}
	for $\bm{\theta}_i, \forall i \in \mathcal{N}$. Remarkably, it is not guaranteed that the Lyapunov equations \eqref{eq:proof_costFct_error_offline_2} and \eqref{eq:proof_costFct_error_offline_3} can be solved exactly. Hence, by setting up a QP from \eqref{eq:proof_costFct_error_offline_3} with $\bm{\epsilon}_{\hjb_i} = \bm{M}_{\hjb_i}\bm{\eta}_i\rvert_{\bm{\alpha}_i=\hat{\bm{\alpha}}_i, \bm{\beta}_i=\hat{\bm{\beta}}_i}-\bm{z}_{\hjb_i}$ to determine $\bm{\theta}_i$ we get the parameter $\hat{\bm{\theta}}_i$ from the set \eqref{eq:HJB_ident_offline_solSet_2}\comment{unique connection between cost and value function parameters; follows from assumption on unique FNE (incl. unique value functions) resp. unique sequence value functions/LE solutions and implicit assumption on basis functions for value functions that this uniqueness transfers to the parameters for these basis functions}. From \eqref{eq:proof_costFct_error_offline_1}, $\hat{\bm{\theta}}_i \neq \bm{\theta}_i^*$ results and thus, in general, a policy improvement follows, i.e. the GT FNE~$\{\bm{\mu}^*_i: i \in \mathcal{N}\}$ is not the searched fix point~$\{\hat{\bm{\mu}}^*_i: i \in \mathcal{N}\}$ of the PI algorithm. Finally, we conclude that $\bm{\mu}_i^*(\bm{x}) \neq \hat{\bm{\mu}}^*_i(\bm{x}), \forall i \in \mathcal{N}$, i.e. the GT FNE~$\{\bm{\mu}^*_i: i \in \mathcal{N}\}$ and the FNE~$\{\hat{\bm{\mu}}^*_i: i \in \mathcal{N}\}$ resulting from the estimated parameters~$\hat{\bm{\alpha}}_i, \hat{\bm{\beta}}_i, \forall i \in \mathcal{N}$ do not match, in general. 
\end{proof}

In summary, for our new IDG method, it is important to guarantee either a sufficiently good approximation structure for completely unknown cost functions or to design appropriate cost function structures manually based on Remark~\ref{remark:choice_basisFunction_costFunction}.

\comment{here, it gets clear that the effects of erroneous value and cost function approximations can combine and thus, even higher FNE and trajectory errors are possible}

\comment{the proof of Lemma~\ref{lemma:costFct_error_offline} shows that if the error function is small or if the influence of the error function is small at the data points used for evaluation of HJB, the bias is small and the error between GT FNE and FNE resulting from the IDG solution is small as well - see online vs. offline result erroneous case simulation example, when probing noise is used for HJB evaluation online method: smaller influence error function in online case and thus, smaller FNE error}

\section{Online Inverse Differential Game Method Based on Hamilton-Jacobi-Bellman Equations} \label{sec:onlineIDGHJB}
In this section, we propose a new online IDG method to solve Problem~\ref{problem:online_idg} based on the theoretical framework of the offline method presented in the previous section. 

\subsection{Known Cost and Value Function Structures} \label{subsec:IDGHJB_online_known_structures}
\tdd{lemma for online FNE identification, second main theorem for final IDG solution to second problem}

Similarly to the logic of the offline approach, we formulate for each player $i \in \mathcal{N}$ an adaption scheme for the parameters $\bar{\bm{\theta}}^*_i$ to identify \eqref{eq:mu_i_linear}. This is done by solving the QP \eqref{eq:proof_FNE_ident_offline_2} via an online gradient descent. With $\bm{M}_{u_i}(t)=-\frac{1}{2}\bar{\bm{\Phi}}_i(\bm{x}^*(t))$ and $\bm{z}_{u_i}(t)=\bm{u}^*_i(t)$, we define $\bm{\epsilon}_{u_i}(t)=\bm{M}_{u_i}(t)\bar{\bm{\theta}}_i-\bm{z}_{u_i}(t)$ and
\begin{align} \label{eq:FNElearninglaw}
	\dot{\hat{\bar{\bm{\theta}}}}_i(t) &= -\tau_i \frac{1}{2}\left(\pdv{\bm{\epsilon}^\top_{u_i}(t)\bm{\epsilon}_{u_i}(t)}{\bar{\bm{\theta}}_i}\right)^\top \notag \\
	&= - \tau_i \bm{M}^\top_{u_i}(t) \bm{M}_{u_i}(t) \hat{\bar{\bm{\theta}}}_i(t) + \tau_i\bm{M}^\top_{u_i}(t) \bm{z}_{u_i}(t),
\end{align}
where $\tau_i \in \mathbb{R}_{>0}$ is the learning rate. Starting from an initial guess $\hat{\bar{\bm{\theta}}}_i(t_0)$ the adaption according to \eqref{eq:FNElearninglaw} is stopped when 
\begin{align} \label{eq:stoppingCriterion}
	\frac{1}{T}\norm{\int_{t-T}^{t}\hat{\bar{\bm{\theta}}}_i(\tau)\text{d}\tau - \int_{t-2T}^{t-T}\hat{\bar{\bm{\theta}}}_i(\tau)\text{d}\tau}
\end{align}
is small\comment{error as well as weights can oscillate, $T$ here actually the $T$ from PE definition, in period of $T$ parameter error descent guaranteed (in general, only for the "excited" parameters) - in simulation, additional stopping criterion analogously defined with error function in place}. Lemma~\ref{lemma:FNEident_online} shows that under a common excitation condition (cf.~Assumption~\ref{assm:excitation_FNEident_online}) on the regression matrix $\bm{M}_{u_i}(t)$ the parameters $\hat{\bar{\bm{\theta}}}_i$ converge to $\bar{\bm{\theta}}^*_i$ and thus, also the estimated FNE strategy $\hat{\bm{\mu}}_i(\bm{x})$ to the GT one $\bm{\mu}^*_i(\bm{x})$\comment{possible to state partial convergence as well, under largest possible PE in subspace, results should extend/apply similar to HJB identification, i.e. then convergence to solution set of LS-based offline identification, but no statement about weight bound in error case possible (but weight bound in non-error case still guaranteed) and subspace PE statements in literature only clearly described for regression vectors and not regression matrices (would be necessary for FNE identification here)}.

\begin{definition}[cf.~{\cite[Definition~6.2, Definition~6.3]{Narendra.2005}}] \label{definition:PE}
	A piecewise continuously differentiable, bounded function (cf.~\cite[Definition~6.1]{Narendra.2005}) $\bm{M}: \mathbb{R}_{\geq 0} \rightarrow \mathbb{R}^{m\times h}$ is called persistently excited (PE) for all $t\geq t_0$ if there exist positive constants $\gamma, T \in \mathbb{R}_{>0}$ such that $\int_{t}^{t+T} \bm{M}(\tau)\bm{M}^\top(\tau) \text{d}\tau \succeq \gamma \bm{I}$ for all $t \geq t_0$. Let $h=1$. Then, the function $\bm{M}(t)$ is called PE in a subspace of $\mathbb{R}^{m}$ if there exists a matrix $\bm{T}_1 \in \mathbb{R}^{r\times m}$ with $r \leq m$ such that $\bm{T}_1\bm{M}(t)$ is PE. 
\end{definition}

\begin{assumption} \label{assm:excitation_FNEident_online}
	The regression matrix $\bm{M}^\top_{u_i}(t)$ is PE\footnote{Reformulating the FNE strategy~$\bm{\mu}_i^*(\bm{x})$ according to \eqref{eq:mu_i_red} is necessary to fulfill this excitation condition.}.
\end{assumption}

\begin{lemma} \label{lemma:FNEident_online}
	If Assumptions~\ref{assm:R_ij}, \ref{assm:V_i_star}, \ref{assm:systemDynamics_basisFunctions_known} and \ref{assm:excitation_FNEident_online} hold, the parameters~$\hat{\bar{\bm{\theta}}}_i$ adapted by the learning scheme~\eqref{eq:FNElearninglaw} exponentially converge from an arbitrary starting point $\hat{\bar{\bm{\theta}}}_i(t_0) \in \mathbb{R}^{p_i \bar{h}_i}$ to $\bar{\bm{\theta}}^*_i$ and thus, also $\hat{\bm{\mu}}_i(\bm{x})$ to $\bm{\mu}_i^*(\bm{x})$.
\end{lemma}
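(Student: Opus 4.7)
The plan is to reduce the proof to the analysis of a linear time-varying homogeneous error system and then invoke a classical PE-based exponential stability result from adaptive control. First I would introduce the parameter error $\tilde{\bm{\theta}}_i(t) = \hat{\bar{\bm{\theta}}}_i(t) - \bar{\bm{\theta}}^*_i$. Using Lemma~\ref{lemma:linear_combination_mu_i} together with the observation that the measured input satisfies $\bm{z}_{u_i}(t) = \bm{u}^*_i(t) = \bm{\mu}^*_i(\bm{x}^*(t)) = -\tfrac{1}{2}\bar{\bm{\Phi}}_i(\bm{x}^*(t))\bar{\bm{\theta}}^*_i = \bm{M}_{u_i}(t)\bar{\bm{\theta}}^*_i$, the adaption law~\eqref{eq:FNElearninglaw} collapses to the autonomous error dynamics
\begin{equation*}
	\dot{\tilde{\bm{\theta}}}_i(t) = -\tau_i\,\bm{M}^\top_{u_i}(t)\,\bm{M}_{u_i}(t)\,\tilde{\bm{\theta}}_i(t).
\end{equation*}

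Next I would establish exponential stability of this system. The quadratic Lyapunov candidate $V_i = \tfrac{1}{2}\tilde{\bm{\theta}}_i^\top\tilde{\bm{\theta}}_i$ yields $\dot V_i = -\tau_i\,\tilde{\bm{\theta}}_i^\top \bm{M}^\top_{u_i}\bm{M}_{u_i}\tilde{\bm{\theta}}_i \leq 0$, which alone only gives boundedness. To promote this to exponential decay I would exploit Assumption~\ref{assm:excitation_FNEident_online}: by Definition~\ref{definition:PE} the PE of $\bm{M}^\top_{u_i}(t)$ means precisely that there exist $\gamma,T>0$ with $\int_t^{t+T} \bm{M}^\top_{u_i}(\tau)\bm{M}_{u_i}(\tau)\,\text{d}\tau \succeq \gamma\bm{I}$ for all $t\geq t_0$, which is exactly the quadratic form appearing in $\dot V_i$. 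Combined with the boundedness and piecewise continuous differentiability of $\bm{M}_{u_i}(t)$ (inherited from the continuous differentiability of $\bar{\bm{\Phi}}_i$ under Assumptions~\ref{assm:V_i_star} and \ref{assm:systemDynamics_basisFunctions_known} and the admissibility of the FNE strategies), the standard adaptive-control result in \cite{Narendra.2005} then provides constants $\kappa,\lambda>0$ (depending on $\tau_i$, $\gamma$, $T$ and the uniform bound on $\bm{M}_{u_i}$) such that $\norm{\tilde{\bm{\theta}}_i(t)} \leq \kappa\,\norm{\tilde{\bm{\theta}}_i(t_0)}\,\mathrm{e}^{-\lambda(t-t_0)}$. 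From \eqref{eq:mu_i_linear}, convergence of $\hat{\bm{\mu}}_i(\bm{x}) = -\tfrac{1}{2}\bar{\bm{\Phi}}_i(\bm{x})\hat{\bar{\bm{\theta}}}_i(t)$ to $\bm{\mu}_i^*(\bm{x})$ on $\mathcal{X}$ follows immediately.

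The main obstacle I anticipate is a clean invocation of the PE lemma in the matrix-regressor setting: the textbook statements are typically given for a scalar output with a vector regressor, whereas here $\bm{M}_{u_i}(t) \in \mathbb{R}^{p_i \times p_i\bar{h}_i}$. Fortunately, Definition~\ref{definition:PE} is already written in a matrix form that matches Assumption~\ref{assm:excitation_FNEident_online} verbatim, so the extension reduces to interpreting the Lyapunov quadratic form along every direction of $\tilde{\bm{\theta}}_i$ and does not require genuinely new technical work. The rest of the proof is a routine plug-in of the classical result.
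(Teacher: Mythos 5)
Your proposal is correct and follows essentially the same route as the paper: both derive the homogeneous error dynamics $\dot{\tilde{\bm{\theta}}}_i = -\tau_i\,\bm{M}^\top_{u_i}\bm{M}_{u_i}\tilde{\bm{\theta}}_i$ from Lemma~\ref{lemma:linear_combination_mu_i} and then invoke a classical PE-based exponential stability result (the paper cites \cite{Jenkins.2018}, which traces back to Morgan--Narendra for the matrix-regressor case you rightly flag as the only delicate point). Your additional Lyapunov-function detail is a fine way to make that invocation explicit, but it adds nothing beyond the paper's argument.
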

\begin{proof}
	From Lemma~\ref{lemma:linear_combination_mu_i}, we have $\bm{u}_i^*(t) = \bm{M}_{u_i}(t)\bar{\bm{\theta}}^*_i, \forall t\geq t_0$. By defining $\bar{\bm{\theta}}^{(\text{err})}_i(t) = \hat{\bar{\bm{\theta}}}_i(t) - \bar{\bm{\theta}}_i^*$, we get the error dynamics
	\begin{align} \label{eq:proof_FNEident_online_1}
		\dot{\bar{\bm{\theta}}}^{(\text{err})}_i(t) &= - \tau_i \bm{M}^\top_{u_i}(t) \bm{M}_{u_i}(t) \left(\bar{\bm{\theta}}^{(\text{err})}_i(t) + \bar{\bm{\theta}}_i^* \right) \notag \\
		&\hphantom{=}+ \tau_i\bm{M}^\top_{u_i}(t) \bm{z}_{u_i}(t) = - \tau_i \bm{M}^\top_{u_i}(t) \bm{M}_{u_i}(t) \bar{\bm{\theta}}^{(\text{err})}_i(t).
	\end{align}
	According to \cite{Jenkins.2018}\comment{formally, only with a regression vector, but they directly follow the initial proof of Morgan/Narendra 1977, which considers the more general case of regression matrices}, the equilibrium $\bar{\bm{\theta}}^{(\text{err})}_i = \bm{0}$ of \eqref{eq:proof_FNEident_online_1} is exponentially stable in the large if Assumption~\ref{assm:excitation_FNEident_online} holds. Thus, from this stability property, we get $\hat{\bar{\bm{\theta}}}_i \rightarrow \bar{\bm{\theta}}^*_i$ and $\hat{\bm{\mu}}_i(\bm{x}) \rightarrow \bm{\mu}^*_i(\bm{x})$ exponentially fast.
\end{proof}

To infer the cost function parameters $\hat{\bm{\alpha}}_i$ and $\hat{\bm{\beta}}_i, \forall i \in \mathcal{N}$, for each player an additional adaption scheme is set up based on solving the QP~\eqref{eq:proof_HJB_ident_offline_2} by an online gradient descent. Firstly, the current estimates $\hat{\bar{\bm{\theta}}}_i(t), \forall i \in \mathcal{N}$ are used to derive current estimates of the FNE strategies $\hat{\bm{\mu}}_i(\bm{x})$ (cf.~\eqref{eq:mu_i_hat}) and the value function weights $\hat{\bm{\theta}}_i^{(r)}$ (cf.~\eqref{eq:theta_r_hat}), which are then used to set up $\bm{m}_{\hjb_i}^{(r)}(\bm{x})^\top$ and $z_{\hjb_i}^{(r)}(\bm{x})$ (cf.~\eqref{eq:hjb_linear_1}). Then, $\bm{m}_{\hjb_i}^{(r)}(\bm{x})^\top$ and $z_{\hjb_i}^{(r)}(\bm{x})$ are evaluated for the current measurement $\bm{x}^*(t)$: $\bm{m}_{\hjb_i}^{(r)\top}(t) = \bm{m}_{\hjb_i}^{(r)}(\bm{x}^*(t))^\top$, $z_{\hjb_i}^{(r)}(t)= z_{\hjb_i}^{(r)}(\bm{x}^*(t))$. Finally, the error function $\epsilon_{\hjb_i}^{(r)}(t) = \bm{m}_{\hjb_i}^{(r)\top}(t) \bm{\eta}_i^{(r)} - z_{\hjb_i}^{(r)}(t)$ can be defined as well as the adaption law
\begin{align} \label{eq:HJB_learningLaw}
	\dot{\hat{\bm{\eta}}}_i^{(r)}(t) &= -\kappa_i \frac{1}{2} \left(\pdv{\epsilon_{\hjb_i}^{(r)}(t)\epsilon_{\hjb_i}^{(r)}(t)}{\bm{\eta}_i^{(r)}} \right)^\top \notag \\
	&= -\kappa_i \bm{m}_{\hjb_i}^{(r)}(t)\bm{m}_{\hjb_i}^{(r)\top}(t) \hat{\bm{\eta}}_i^{(r)}(t) \notag \\
	&\hphantom{=}+ \kappa_i \bm{m}_{\hjb_i}^{(r)}(t)z_{\hjb_i}^{(r)}(t),
\end{align}
where $\kappa_i \in \mathbb{R}_{>0}$ is the learning rate. The adaption is stopped when \eqref{eq:stoppingCriterion} for $\hat{\bm{\eta}}_i^{(r)}$ is small. Theorem~\ref{theorem:HJBident_online} shows that under the excitation Assumption~\ref{assm:excitation_HJBident_online} the parameters $\hat{\bm{\eta}}_i^{(r)}$ converge to a fixed element of the solution set of Theorem~\ref{theorem:HJB_ident_offline}, i.e. \eqref{eq:HJB_ident_offline_solSet_1}, and thus, solve Problem~\ref{problem:online_idg} when for the converged value $\hat{\bm{\eta}}_i^{(r)}$ an arbitrary vector $\bar{\bm{w}}_i^{(r)}$ in line with Theorem~\ref{theorem:HJB_ident_offline} exists\comment{the additional constraints on $\hat{\bm{\alpha}}_i, \hat{\bm{\beta}}_i$ need to be applied online/during learning; in case of simple upper/lower bounds easily implementable}.

\begin{assumption} \label{assm:excitation_HJBident_online}
	The vector $\bm{m}_{\hjb_i}^{(r)}(t)$ is PE in the largest possible subspace of $\mathbb{R}^{p+m_i+h_i-\bar{h}_i}$, i.e. there is no matrix $\bm{T}_{i_1}$ with a higher rank such that $\bm{T}_{i_1} \bm{m}_{\hjb_i}^{(r)}(t)$ is PE\comment{full PE in most cases not given since typically non-uniqueness of IDG solutions beyond scaling ambiguity}.
\end{assumption}

\begin{theorem} \label{theorem:HJBident_online}
	Let Assumptions~\ref{assm:Q_i}, \ref{assm:R_ij}, \ref{assm:V_i_star}, \ref{assm:systemDynamics_basisFunctions_known}, \ref{assm:excitation_HJB_offline}, \ref{assm:excitation_FNEident_online} and \ref{assm:excitation_HJBident_online} hold $\forall i \in \mathcal{N}$. Furthermore, let the identification of the FNE strategy $\bm{\mu}^*_i(\bm{x}), \forall i \in \mathcal{N}$ be performed by Lemma~\ref{lemma:FNEident_online}. Then, for every player $i \in \mathcal{N}$ the adaption scheme~\eqref{eq:HJB_learningLaw} applied to arbitrary initial values $\hat{\bm{\eta}}_i^{(r)}(t_0) \in \mathbb{R}^{p+m_i+h_i-\bar{h}_i}$ yields convergence\comment{exponential convergence not guaranteed (but reasonable since only PE in subspace)} of $\hat{\bm{\eta}}_i$ to a fixed element of the solution set~\eqref{eq:HJB_ident_offline_solSet_1}.
\end{theorem}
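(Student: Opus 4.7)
The plan is to exploit the cascade structure of the two adaption schemes and reduce Theorem~\ref{theorem:HJBident_online} to a standard subspace-PE argument for gradient descent. First, I would invoke Lemma~\ref{lemma:FNEident_online} with Assumption~\ref{assm:excitation_FNEident_online} to obtain $\hat{\bar{\bm{\theta}}}_i(t)\to\bar{\bm{\theta}}_i^*$ exponentially, and hence $\hat{\bm{\mu}}_i(\cdot)\to\bm{\mu}_i^*(\cdot)$ and $\hat{\bm{\theta}}_i^{(r)}\to c_i\bm{\theta}_i^{(r)*}$. This makes the regressor $\bm{m}_{\hjb_i}^{(r)}(t)$ and the target $z_{\hjb_i}^{(r)}(t)$ in~\eqref{eq:HJB_learningLaw} asymptotically coincide with their offline counterparts from~\eqref{eq:hjb_linear_1}. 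Consequently, for any element $\bm{\eta}_i^{(r)*}$ of the unprojected solution set underlying~\eqref{eq:HJB_ident_offline_solSet_1}, the identity $\bm{m}_{\hjb_i}^{(r)\top}(t)\bm{\eta}_i^{(r)*}=z_{\hjb_i}^{(r)}(t)$ holds in the limit, and the existence of such an element is guaranteed by the offline consistency established via Assumption~\ref{assm:excitation_HJB_offline}.

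Second, I would fix one such $\bm{\eta}_i^{(r)*}$, define the error $\tilde{\bm{\eta}}_i^{(r)}(t)=\hat{\bm{\eta}}_i^{(r)}(t)-\bm{\eta}_i^{(r)*}$, and derive the error dynamics
\begin{align*}
\dot{\tilde{\bm{\eta}}}_i^{(r)}(t) = -\kappa_i\,\bm{m}_{\hjb_i}^{(r)}(t)\bm{m}_{\hjb_i}^{(r)\top}(t)\,\tilde{\bm{\eta}}_i^{(r)}(t) + \bm{\delta}_i(t),
\end{align*}
where $\bm{\delta}_i(t)$ is an exponentially decaying perturbation driven by the FNE identification transient. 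With the Lyapunov candidate $V=\tfrac{1}{2}\|\tilde{\bm{\eta}}_i^{(r)}\|^2$, the unperturbed part is negative semidefinite.

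Third, using Assumption~\ref{assm:excitation_HJBident_online} in the Morgan--Narendra / \cite{Jenkins.2018} form of a subspace-PE result, I would decompose $\mathbb{R}^{p+m_i+h_i-\bar{h}_i}=\mathcal{S}\oplus\mathcal{S}^\perp$, where $\mathcal{S}$ is the maximal subspace on which $\bm{m}_{\hjb_i}^{(r)}$ is PE. On $\mathcal{S}$ the error component decays exponentially; on $\mathcal{S}^\perp$ the error component is invariant since $\bm{m}_{\hjb_i}^{(r)}(t)\in\mathcal{S}$ asymptotically. Hence $\hat{\bm{\eta}}_i^{(r)}(t)$ converges to a fixed vector that differs from $\bm{\eta}_i^{(r)*}$ only by a constant shift in $\mathcal{S}^\perp$ determined by $\hat{\bm{\eta}}_i^{(r)}(t_0)$. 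By construction of \eqref{eq:HJB_ident_offline_solSet_1}, that limit is itself a member of the offline solution set, with the free vector $\bar{\bm{w}}_i^{(r)}$ uniquely fixed by the projection of the initial condition onto $\mathcal{S}^\perp$.

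The main obstacle I expect is the cascade coupling: during the transient, the HJB regressor is built from the still-adapting FNE estimate, so both the subspace-PE property and the defining subspace $\mathcal{S}$ strictly speaking apply only to the asymptotic regressor, not at every instant. Rigorously closing this gap requires a vanishing-perturbation / ISS argument showing that subspace PE survives exponentially small perturbations of the regressor, so that no excitation direction is lost along the way and the limit indeed lies \emph{in} the offline solution set rather than merely close to it.
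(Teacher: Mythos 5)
Your proposal follows essentially the same route as the paper's proof: treat the FNE-identification transient as a vanishing perturbation (via Lemma~\ref{lemma:FNEident_online}), write the gradient-flow error dynamics about a consistent $\bm{\eta}_i^{(r)*}$, split the parameter space according to the subspace-PE Assumption~\ref{assm:excitation_HJBident_online} in the Morgan--Narendra style, and use Assumption~\ref{assm:excitation_HJB_offline} to conclude that the limit satisfies $\bm{M}^{(r)}_{\hjb_i}\hat{\bm{\eta}}_i^{(r)}=\bm{z}^{(r)}_{\hjb_i}$ and hence lies in the set~\eqref{eq:HJB_ident_offline_solSet_1}. The only (harmless) deviation is your claim that the $\mathcal{S}^\perp$-component is exactly invariant and fixed by the initial condition --- the paper only establishes that it converges to \emph{some} fixed value, since the cross-coupling term $\bm{v}_{i_2}\bm{v}_{i_1}^\top\bm{y}_{i_1}$ can move it during the transient before $\norm{\bm{v}_{i_2}}_1\rightarrow 0$; this does not affect the theorem's conclusion, which only asserts convergence to a fixed element of the set.
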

\begin{proof}
	According to Lemma~\ref{lemma:FNEident_online}, the FNE identification $\hat{\bm{\mu}}_i(\bm{x})$ converges exponentially fast to $\bm{\mu}^*_i(\bm{x}), \forall i \in \mathcal{N}$. Thus, we find a time $t_{\text{conv}} \geq t_0$ where $\hat{\bm{\mu}}_i(\bm{x}) \approx \bm{\mu}^*_i(\bm{x}), \forall i \in \mathcal{N}$. Now, we define $\forall t \geq t_{\text{conv}}$ with $\bm{\eta}_i^{(r,\text{err})}(t) = \hat{\bm{\eta}}_i^{(r)}(t) - \bm{\eta}_i^{(r)^*}$ the error dynamics
	\begin{align} \label{eq:proof_HJBident_online_1}
		\dot{\bm{\eta}}_i^{(r,\text{err})}(t) = -\kappa_i \bm{m}_{\hjb_i}^{(r)}(t)\bm{m}_{\hjb_i}^{(r)\top}(t) \bm{\eta}_i^{(r,\text{err})}(t),
	\end{align}
	where we used $\bm{m}_{\hjb_i}^{(r)\top}(t) \bm{\eta}_i^{(r)^*} = z_{\hjb_i}^{(r)}(t), \forall t\geq t_{\text{conv}}$ (cf.~\eqref{eq:hjb_linear_1}). 
	
	Let $\bm{T}_i\in\mathbb{R}^{p+m_i+h_i-\bar{h}_i \times p+m_i+h_i-\bar{h}_i}$\comment{$\bm{T}$ has not necessarily to be a matrix with the unit vectors in its columns in general; thus, it depends on how $\bm{T}_{i_1}$ can be chosen whether some elements of $\hat{\bm{\eta}}_i^{(r)}$ really converge to the corresponding elements of $\bm{\eta}_i^{(r)^*}$; but then, although $\bm{T}_{i_1}$ is chosen based on Assumption~\ref{assm:excitation_HJBident_online}, $\bm{T}_{i_2}$ is arbitrary to some extent} be an orthogonal matrix such that $\bm{T}_i^\top=\mat{ \bm{T}_{i_1}^\top & \bm{T}_{i_2}^\top }$ where $\bm{T}_{i_1}$ is the matrix according to Assumption~\ref{assm:excitation_HJBident_online}.
	Following the proof of \cite[Theorem~6.1]{Narendra.2005}, we firstly derive with $\bm{y}_{i_1}(t) = \bm{T}_{i_1}\bm{\eta}_i^{(r,\text{err})}(t)$, $\bm{y}_{i_2}(t) = \bm{T}_{i_2}\bm{\eta}_i^{(r,\text{err})}(t)$, $\bm{v}_{i_1}(t) = \bm{T}_{i_1} \bm{m}_{\hjb_i}^{(r)}(t)$ and $\bm{v}_{i_2}(t) = \bm{T}_{i_2} \bm{m}_{\hjb_i}^{(r)}(t)$ from \eqref{eq:proof_HJBident_online_1}
	\begin{align} 
		\dot{\bm{y}}_{i_1} = - \kappa_i \bm{v}_{i_1}\bm{v}_{i_1}^\top \bm{y}_{i_1} - \kappa_i \bm{v}_{i_1}\bm{v}_{i_2}^\top \bm{y}_{i_2}, \label{eq:proof_HJBident_online_2} \\
		\dot{\bm{y}}_{i_2} = - \kappa_i \bm{v}_{i_2}\bm{v}_{i_1}^\top \bm{y}_{i_1} - \kappa_i \bm{v}_{i_2}\bm{v}_{i_2}^\top \bm{y}_{i_2}, \label{eq:proof_HJBident_online_3}
	\end{align}
	where $\bm{y}_{i_1}$, $\bm{y}_{i_2}$, $\bm{v}_{i_1}$ and $\bm{v}_{i_2}$ are bounded, which follows from the boundedness of $\bm{\eta}_i^{(r,\text{err})}$ and $\bm{m}_{\hjb_i}^{(r)}$, respectively.
	From Assumption~\ref{assm:excitation_HJBident_online}, it follows that $\bm{v}_{i_1}$ is PE and $\norm{\bm{v}_{i_2}}_1\rightarrow 0$ as $t\rightarrow \infty$ (cf.~\cite[Lemma~6.4]{Narendra.2005}). According to \cite[Theorem~6.1]{Narendra.2005}, $\bm{y}_{i_1} \rightarrow \bm{0}$ as $t \rightarrow \infty$ and thus, also $\lim_{t\rightarrow \infty} \norm{\dot{\bm{y}}_{i_1}}_1 = \lim_{t\rightarrow \infty} \abs{\kappa_i}\norm{\bm{v}_{i_1}}_1\abs{\bm{v}_{i_2}^\top\bm{y}_{i_2}} = 0$ due to the PE of $\bm{v}_{i_1}$. Since $\bm{v}_{i_1}$ is PE, we further have $\norm{\bm{v}_{i_1}}_1 \nrightarrow 0$ as $t\rightarrow \infty$ and hence, $\abs{\bm{v}_{i_2}^\top\bm{y}_{i_2}} \rightarrow 0$ as $t\rightarrow \infty$. This yields with $\norm{\dot{\bm{y}}_{i_2}}_1 \leq \abs{\kappa_i}\norm{\bm{v}_{i_2}}_1 \abs{\bm{v}_{i_1}^\top \bm{y}_{i_1}} + \abs{\kappa_i}\norm{\bm{v}_{i_2}}_1\abs{\bm{v}_{i_2}^\top \bm{y}_{i_2}}$ $\lim_{t \rightarrow \infty} \norm{\dot{\bm{y}}_{i_2}}_1 = 0$ since $\norm{\bm{v}_{i_2}}_1 \rightarrow 0$ as $t\rightarrow \infty$. We have established the convergence of $\hat{\bm{\eta}}_i$ to a fixed value ($\dot{\bm{y}}_{i_1} \rightarrow 0$ and $\dot{\bm{y}}_{i_2} \rightarrow 0$ as $t\rightarrow \infty$) and the convergence of $\bm{T}_{i_1}\hat{\bm{\eta}}_i^{(r)}$ to $\bm{T}_{i_1}\bm{\eta}_i^{(r)^*}$.
	
	Furthermore, from $\abs{\bm{v}_{i_2}^\top\bm{y}_{i_2}} \rightarrow 0$ as $t\rightarrow \infty$, we conclude that for the converged value $\bm{\eta}_i^{(r,\text{err})}$ $\bm{m}_{\hjb_i}^{(r)\top}(t) \bm{T}_{i_2}^\top \bm{T}_{i_2} \bm{\eta}_i^{(r,\text{err})} = 0$ holds $\forall t \geq t_{\text{conv}}$ large enough. Now, we can assume that the GT trajectory $\bm{x}^*(t_0 \rightarrow \infty)$ is excited such that Assumption~\ref{assm:excitation_HJB_offline} is fulfilled as well\comment{here, it gets clear that additional excitation signals according to Remark~\ref{remark:excitation_FNEvsHJB} can be very beneficial/even necessary} and we get $\bm{M}_{\hjb_i}^{(r)} \bm{T}_{i_2}^\top \bm{T}_{i_2} \bm{\eta}_i^{(r,\text{err})} = 0$ for the converged value $\bm{\eta}_i^{(r,\text{err})}$. Finally, we can show that the converged values $\hat{\bm{\eta}}_i^{(r)} = \bm{T}^\top_{i_1}\bm{T}_{i_1}\hat{\bm{\eta}}_i^{(r)} + \bm{T}^\top_{i_2}\bm{T}_{i_2}\hat{\bm{\eta}}_i^{(r)}$ fulfill \eqref{eq:proof_HJB_ident_offline_1}
	\begin{align} \label{eq:proof_HJBident_online_4}
		&\bm{M}_{\hjb_i}^{(r)} \hat{\bm{\eta}}_i^{(r)} = \bm{M}_{\hjb_i}^{(r)}\bm{T}^\top_{i_1}\bm{T}_{i_1}\bm{\eta}_i^{(r)^*} + \bm{M}_{\hjb_i}^{(r)}\bm{T}^\top_{i_2}\bm{T}_{i_2}\bm{\eta}_i^{(r)^*} \notag \\
		&\hphantom{=}+ \bm{M}_{\hjb_i}^{(r)}\bm{T}^\top_{i_2}\bm{T}_{i_2}\bm{\eta}_i^{(r,\text{err})} = \bm{M}_{\hjb_i}^{(r)}\bm{\eta}_i^{(r)^*} = \bm{z}_{\hjb_i}^{(r)}
	\end{align}
	and are thus from the set \eqref{eq:HJB_ident_offline_solSet_1}.
\end{proof}

For reproducibility, Algorithm~\ref{alg:onlineIDGHJB} summarizes the procedure of our new online IDG method.

\begin{remark} \label{remark:excitation_FNEvsHJB}
	Whereas the excitation Assumption~\ref{assm:excitation_FNEident_online} for the FNE strategy identification needs to be fulfilled solely by the demonstrated GT trajectories $\bm{x}^*(t_0 \rightarrow \infty), \bm{u}^*(t_0 \rightarrow \infty)$, for the fulfillment of Assumption~\ref{assm:excitation_HJBident_online} for the HJB identification further excitation signals such as probing noise or a sum of sine functions can be added to $\bm{x}^*(t_0 \rightarrow \infty)$ to compute $\bm{m}_{\hjb_i}^{(r)\top}(t)$ and $z_{\hjb_i}^{(r)}(t)$.
\end{remark}

\tdd{Kürzungspotential (2): Algorithmus streichen}
\begin{algorithm}[t]
	\caption{Online HJB-Based IDG Method}
	\label{alg:onlineIDGHJB}
	\DontPrintSemicolon
	
	\KwIn{Current measurement of GT trajectories $\bm{x}^*(t)$ and $\bm{u}^*(t)$, $\bm{f}(\bm{x})$, $\bm{G}_i(\bm{x}), \forall i \in \mathcal{N}$, $\bm{\psi}_i(\bm{x}), \bm{\phi}_i(\bm{x}), \forall i \in \mathcal{N}$ and learning rates $\tau_i, \kappa_i, \forall i \in \mathcal{N}$.}
	\KwOut{Current cost function parameter estimation $\hat{\bm{\beta}}_i(t)$ and $\hat{\bm{R}}_{ij}(t), \forall i,j \in \mathcal{N}$.}
	
	\For{$i \in \mathcal{N}$}{
		Compute value function parameter update $\dot{\hat{\bar{\bm{\theta}}}}_i$~\eqref{eq:FNElearninglaw}. \;
		Compute $\hat{\bm{\mu}}_i(\bm{x})$~\eqref{eq:mu_i_hat} and $\hat{\bm{\theta}}_i^{(r)}$~\eqref{eq:theta_r_hat}. \;
		\If{\eqref{eq:stoppingCriterion} smaller than threshold}{
			Set $\tau_i = 0$.
		}
	}
	
	\For{$i \in \mathcal{N}$}{
		Compute cost function parameter update $\dot{\hat{\bm{\eta}}}_i^{(r)}$~\eqref{eq:HJB_learningLaw}. \;
		\If{\eqref{eq:stoppingCriterion} with $\hat{\bm{\eta}}_i^{(r)}$ smaller than threshold}{
			Set $\kappa_i = 0$.
		}
		\Return $\hat{\bm{\beta}}_i(t)$ and $\hat{\bm{R}}_{ij}(t), \forall j \in \mathcal{N}$.
	}
\end{algorithm}


\subsection{Approximated Value Function Structures} \label{subsec:IDGHJB_online_approx_valueFct_structures}
\tdd{similarly to offline case, give lemma for value function structure approximation error, only discuss remaining case, remark in general worse solution (higher error) than in offline approach}

In the following, we analyze the case where Assumption~\ref{assm:V_i_star} is not fulfilled and the basis functions $\bm{\phi}_i, \forall i \in \mathcal{N}$ are chosen in line with an approximation theorem. Lemma~\ref{lemma:FNEident_valueFct_error_online} states that a bounded error between the online identified FNE strategy $\hat{\bm{\mu}}^{(\text{online})}_i(\bm{x})$ and the GT one $\bm{\mu}^*_i(\bm{x}), \forall i \in \mathcal{N}$ results. Then, with Lemma~\ref{lemma:HJBident_valueFct_error_online}, we show that under Assumption~\ref{assm:Q_i_FNEident_online} the online estimated parameters $\hat{\bm{\beta}}_i$ and $\hat{\bm{R}}_{ij}, \forall i,j \in \mathcal{N}$ converge to fixed values which yield the identified FNE $\{ \hat{\bm{\mu}}^{(\text{online})}_i: i \in \mathcal{N} \}$.

\begin{lemma} \label{lemma:FNEident_valueFct_error_online}
	Let Assumptions~\ref{assm:R_ij}, \ref{assm:systemDynamics_basisFunctions_known} and \ref{assm:excitation_FNEident_online} hold. If Assumption~\ref{assm:V_i_star} is not fulfilled but the basis functions~$\bm{\phi}_i$ are as defined in Lemma~\ref{lemma:FNEident_valueFct_error_offline}, the parameter adaption scheme \eqref{eq:FNElearninglaw} exponentially converges to a residual set\comment{therefore, the specially designed stopping criterion, parameters can oscillate} such that $\norm{ \bar{\bm{\theta}}_i^{(\text{err})} } \leq \Theta_i < \infty$ ($\Theta_i \in \mathbb{R}_{\geq 0}$) and
	\begin{align} \label{eq:FNEident_online_bounded}
		\norm{ \hat{\bm{\mu}}^{(\text{online})}_i(\bm{x}) - \bm{\mu}^*_i(\bm{x}) } \leq \norm{ \tilde{\bm{\mu}}^{*}_i(\bm{x}) - \bm{\mu}^*_i(\bm{x}) } \notag \\+ \frac{1}{2}\norm{\bar{\bm{\Phi}}_i}\Theta_i < \infty,
	\end{align}
	where $\hat{\bm{\mu}}^{(\text{online})}_i(\bm{x}) = -\frac{1}{2} \bar{\bm{\Phi}}_i(\bm{x}) \hat{\bar{\bm{\theta}}}_i$ ($\hat{\bar{\bm{\theta}}}_i$ from the residual set and fixed) is the online identified FNE strategy, $\tilde{\bm{\mu}}^{*}_i(\bm{x}) = -\frac{1}{2} \bar{\bm{\Phi}}_i(\bm{x}) \bar{\bm{\theta}}^*_i$ the offline identified one and $\bar{\bm{\theta}}_i^{(\text{err})} = \hat{\bar{\bm{\theta}}}_i - \bar{\bm{\theta}}^*_i$.
\end{lemma}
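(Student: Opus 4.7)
The plan is to combine the decomposition from Lemma~\ref{lemma:FNEident_valueFct_error_offline} with the PE-based convergence argument used in Lemma~\ref{lemma:FNEident_online}, the only novelty being a bounded additive perturbation in the error dynamics. By Lemma~\ref{lemma:FNEident_valueFct_error_offline}, under the Stone--Weierstrass assumption on $\bm{\phi}_i$ there exist a bounded, twice continuously differentiable residual $\bar{\varepsilon}_i$ and an auxiliary control law $\tilde{\bm{\mu}}_i^*(\bm{x}) = -\tfrac{1}{2}\bar{\bm{\Phi}}_i(\bm{x})\bar{\bm{\theta}}_i^*$ satisfying $\norm{\tilde{\bm{\mu}}_i^*(\bm{x})-\bm{\mu}_i^*(\bm{x})}<\infty$ on $\mathcal{X}$. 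Along the GT trajectory this lets us write $\bm{z}_{u_i}(t)=\bm{u}_i^*(t)=\bm{M}_{u_i}(t)\bar{\bm{\theta}}_i^* + \bm{d}_i(t)$ with the disturbance $\bm{d}_i(t)=\bm{\mu}_i^*(\bm{x}^*(t))-\tilde{\bm{\mu}}_i^*(\bm{x}^*(t))$, which is uniformly bounded by \eqref{eq:bounded_FNE_error}.

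Substituting this decomposition into \eqref{eq:FNElearninglaw} and subtracting $\bar{\bm{\theta}}_i^*$ (as in \eqref{eq:proof_FNEident_online_1}) yields the perturbed error dynamics
\begin{align*}
\dot{\bar{\bm{\theta}}}_i^{(\text{err})}(t) = -\tau_i \bm{M}_{u_i}^\top(t)\bm{M}_{u_i}(t)\,\bar{\bm{\theta}}_i^{(\text{err})}(t) + \tau_i \bm{M}_{u_i}^\top(t)\,\bm{d}_i(t).
\end{align*}
Under Assumption~\ref{assm:excitation_FNEident_online}, the unforced part is exponentially stable in the large by the argument of Lemma~\ref{lemma:FNEident_online}. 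Since $\bar{\bm{\Phi}}_i$ is continuous on the compact $\mathcal{X}$, $\bm{M}_{u_i}$ is uniformly bounded, so the perturbation input $\tau_i\bm{M}_{u_i}^\top\bm{d}_i$ is uniformly bounded as well. Standard robust-PE / total-stability results for adaptive schemes (cf.~\cite{Narendra.2005}) then ensure exponential convergence to a residual set $\{\bar{\bm{\theta}}_i^{(\text{err})}:\norm{\bar{\bm{\theta}}_i^{(\text{err})}}\leq\Theta_i\}$, with $\Theta_i<\infty$ increasing in $\sup_t\norm{\bm{d}_i(t)}$ and decreasing in the PE level $\gamma/T$ of $\bm{M}_{u_i}^\top$. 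The oscillatory residual motion inside this set is exactly what motivates the averaging-type stopping criterion~\eqref{eq:stoppingCriterion}.

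The bound~\eqref{eq:FNEident_online_bounded} then follows from a triangle inequality: for any fixed $\hat{\bar{\bm{\theta}}}_i$ inside the residual set,
\begin{align*}
\norm{\hat{\bm{\mu}}_i^{(\text{online})}(\bm{x})-\bm{\mu}_i^*(\bm{x})}
&\leq \norm{\hat{\bm{\mu}}_i^{(\text{online})}(\bm{x})-\tilde{\bm{\mu}}_i^*(\bm{x})}+\norm{\tilde{\bm{\mu}}_i^*(\bm{x})-\bm{\mu}_i^*(\bm{x})}\\
&= \tfrac{1}{2}\norm{\bar{\bm{\Phi}}_i(\bm{x})\,\bar{\bm{\theta}}_i^{(\text{err})}}+\norm{\tilde{\bm{\mu}}_i^*(\bm{x})-\bm{\mu}_i^*(\bm{x})}\\
&\leq \tfrac{1}{2}\norm{\bar{\bm{\Phi}}_i}\Theta_i+\norm{\tilde{\bm{\mu}}_i^*(\bm{x})-\bm{\mu}_i^*(\bm{x})},
\end{align*}
where the first summand is finite because $\Theta_i<\infty$ and the second by Lemma~\ref{lemma:FNEident_valueFct_error_offline}. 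The main obstacle is the rigorous invocation of the perturbed-PE convergence result: the classical statements in \cite{Narendra.2005} are formulated for regression vectors, whereas $\bm{M}_{u_i}$ here is a regression matrix. However, the Morgan--Narendra Lyapunov construction already used in the proof of Lemma~\ref{lemma:FNEident_online} extends verbatim to the matrix case, and the quadratic Lyapunov function $V=\tfrac{1}{2}\norm{\bar{\bm{\theta}}_i^{(\text{err})}}^2$ together with Young's inequality provides the required ISS-type estimate to pin down $\Theta_i$ explicitly in terms of $\sup_t\norm{\bm{d}_i(t)}$.
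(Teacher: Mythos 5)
Your proposal is correct and follows essentially the same route as the paper: the same decomposition of $\bm{u}_i^*(t)$ into $\bm{M}_{u_i}(t)\bar{\bm{\theta}}_i^*$ plus a bounded disturbance from Lemma~\ref{lemma:FNEident_valueFct_error_offline}, the same perturbed error dynamics, exponential convergence to a residual set under PE, and the same triangle inequality for \eqref{eq:FNEident_online_bounded}. The only cosmetic difference is the reference used for the perturbed-PE step (the paper invokes a technical lemma from \cite{Vamvoudakis.2010} where you argue via a Lyapunov/ISS estimate from \cite{Narendra.2005}), which does not change the substance.
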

\begin{proof}
	From Lemma~\ref{lemma:FNEident_valueFct_error_offline}, the existence of the bounded error function $\bar{\varepsilon}_i(\bm{x})$ and parameters $\bm{\theta}_i^*$ follows such that $\forall \bm{x} \in \mathcal{X}$ $V_i^*(\bm{x}) = {\bm{\theta}_i^*}^\top \bm{\phi}_i(\bm{x}) + \bar{\varepsilon}_i(\bm{x})$. The parameters $\bm{\theta}_i^*$ define the offline identified FNE strategy $\tilde{\bm{\mu}}^{*}_i(\bm{x}) = -\frac{1}{2} \bar{\bm{\Phi}}_i(\bm{x}) \bar{\bm{\theta}}^*_i$ according to Lemma~\ref{lemma:FNEident_valueFct_error_offline}.
	
	Now, by defining $\bar{\bm{\theta}}^{(\text{err})}_i(t) = \hat{\bar{\bm{\theta}}}_i(t) - \bar{\bm{\theta}}_i^*$ as the error between the online and offline identified FNE parameters, we get the error dynamics
	\begin{align} \label{eq:proof_valueFct_error_online_1}
		\dot{\bar{\bm{\theta}}}^{(\text{err})}_i&(t) = - \tau_i \bm{M}^\top_{u_i}(t) \bm{M}_{u_i}(t) \bar{\bm{\theta}}^{(\text{err})}_i(t) \notag \\
		&- \tau_i\bm{M}^\top_{u_i}(t) \left( \tilde{\bm{\mu}}^{*}_i(\bm{x}^*(t)) - \bm{\mu}^*_i(\bm{x}^*(t)) \right).
	\end{align}
	According to \cite[Technical Lemma 2]{Vamvoudakis.2010}, since $\bm{M}_{u_i}^\top(t)$ is PE, $\norm{\tilde{\bm{\mu}}^{*}_i(\bm{x}^*(t)) - \bm{\mu}^*_i(\bm{x}^*(t))}<\infty$ (cf.~Lemma~\ref{lemma:FNEident_valueFct_error_offline}) and $\bm{M}_{u_i}^\top(t)$ and $\bar{\bm{\theta}}^{(\text{err})}_i(t)$\comment{ESL with bounded error} are bounded, we find $\Theta_i \in \mathbb{R}_{\geq 0}$ such that $\hat{\bar{\bm{\theta}}}_i$ exponentially converges to the set~$\norm{ \bar{\bm{\theta}}_i^{(\text{err})} } \leq \Theta_i < \infty$.
	
	With a fixed value $\hat{\bar{\bm{\theta}}}_i$ from this residual set, we can establish
	\begin{align} \label{eq:proof_valueFct_error_online_2}
		&\norm{ \hat{\bm{\mu}}^{(\text{online})}_i(\bm{x}) - \bm{\mu}^*_i(\bm{x}) } = \notag \\
		&\norm{ -\frac{1}{2} \bar{\bm{\Phi}}_i(\bm{x}) \bar{\bm{\theta}}^*_i - \bm{\mu}^*_i(\bm{x}) -\frac{1}{2} \bar{\bm{\Phi}}_i(\bm{x}) \bar{\bm{\theta}}^{(\text{err})}_i }
	\end{align}
	and thus, the upper bound \eqref{eq:FNEident_online_bounded}.
\end{proof}

\begin{remark} \label{remark:greaterError_onlineFNEident}
	In general, the error of the online identified FNE strategy $\hat{\bm{\mu}}^{(\text{online})}_i(\bm{x})$ to the GT FNE strategy $\bm{\mu}^*_i(\bm{x})$ is greater than with the offline identified one $\tilde{\bm{\mu}}^{*}_i(\bm{x})$ when considering arbitrary $\bm{x} \in \mathcal{X}$. This is due to the bounded error between the online identified parameters $\hat{\bar{\bm{\theta}}}_i$ from the residual set and the offline determined parameters $\bar{\bm{\theta}}^*_i$ which are by its computation (cf.~Lemma~\ref{lemma:FNEident_valueFct_error_offline}) the best parameters approximating $\bm{\mu}^*_i(\bm{x})$ on $\mathcal{X}$ with the chosen basis functions~$\bm{\phi}_i$.
\end{remark}

\begin{assumption} \label{assm:Q_i_FNEident_online}
	Parameters $\tilde{\bm{R}}_{ij}$, $\tilde{\bm{\beta}}_i, \forall i,j \in \mathcal{N}$ exist such that they define with $\tilde{Q}_i(\bm{x}) = \tilde{\bm{\beta}}_i^\top \bm{\psi}_i(\bm{x})$ a DG as given in Section~\ref{sec:problem} with the unique FNE $\{\hat{\bm{\mu}}^{(\text{online})}_i: i \in \mathcal{N}\}$ and its corresponding value functions $\tilde{V}^*_i(\bm{x}) = {\hat{\bm{\theta}}_i}^\top \bm{\phi}_i(\bm{x})$.
\end{assumption}

\begin{lemma} \label{lemma:HJBident_valueFct_error_online}
	Let Assumptions~\ref{assm:R_ij}, \ref{assm:systemDynamics_basisFunctions_known}, \ref{assm:excitation_HJB_offline}, \ref{assm:excitation_FNEident_online}, \ref{assm:excitation_HJBident_online} and \ref{assm:Q_i_FNEident_online} hold $\forall i \in \mathcal{N}$. Furthermore, let the basis functions~$\bm{\phi}_i, \forall i \in \mathcal{N}$ be chosen according to Lemma~\ref{lemma:FNEident_valueFct_error_offline} and let the identification of the FNE strategy $\tilde{\bm{\mu}}^{*}_i(\bm{x}), \forall i \in \mathcal{N}$ be performed by Lemma~\ref{lemma:FNEident_valueFct_error_online}.
	Then, for every player $i \in \mathcal{N}$ the parameter adaption scheme \eqref{eq:HJB_learningLaw} yields convergence to a fixed element of the solution set \eqref{eq:HJB_ident_offline_solSet_1}, which contains all parameters that yield a FNE~$\{\hat{\bm{\mu}}^*_i: i \in \mathcal{N}\}$ with $\hat{\bm{\mu}}_i^*(\bm{x}) = \hat{\bm{\mu}}^{(\text{online})}_i(\bm{x}), \forall i \in \mathcal{N}$. Thus, for $\hat{\bm{\mu}}_i^*(\bm{x}), \forall i \in \mathcal{N}$ the upper bound~\eqref{eq:FNEident_online_bounded} holds as well.
\end{lemma}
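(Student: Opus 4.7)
The plan is to combine the online convergence result of Theorem~\ref{theorem:HJBident_online} with the offline analysis of Lemma~\ref{lemma:HJBident_valueFct_error_offline}, using the ``auxiliary DG'' constructed via Assumption~\ref{assm:Q_i_FNEident_online} as the bridge between them. First I would invoke Lemma~\ref{lemma:FNEident_valueFct_error_online} to guarantee that the online FNE identification converges (up to a bounded residual set) to a fixed strategy $\hat{\bm{\mu}}^{(\text{online})}_i(\bm{x}) = -\tfrac{1}{2}\bar{\bm{\Phi}}_i(\bm{x})\hat{\bar{\bm{\theta}}}_i$ for every $i \in \mathcal{N}$. From some time $t_{\text{conv}}\geq t_0$ onwards, both the regression vector $\bm{m}_{\hjb_i}^{(r)}(t)$ and the target $z_{\hjb_i}^{(r)}(t)$ used in the adaptation law~\eqref{eq:HJB_learningLaw} can therefore be treated as constructed from these fixed $\hat{\bm{\mu}}^{(\text{online})}_j$ and $\hat{\bm{\theta}}_i^{(r)}$.

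Next I would exploit Assumption~\ref{assm:Q_i_FNEident_online} to introduce an auxiliary DG in the form of Section~\ref{sec:problem} whose unknown cost parameters $\tilde{\bm{R}}_{ij}, \tilde{\bm{\beta}}_i$ and quadratic value function weights $\hat{\bm{\theta}}_i$ admit $\{\hat{\bm{\mu}}^{(\text{online})}_i: i\in\mathcal{N}\}$ as a unique FNE with $\tilde{V}_i^*(\bm{x})=\hat{\bm{\theta}}_i^\top\bm{\phi}_i(\bm{x})$. For this DG the coupled HJB equations can be rewritten exactly in the form~\eqref{eq:hjb_linear_1} (with $\hat{\bm{\mu}}^{(\text{online})}$ substituted for $\bm{\mu}^*$, $\tilde{\bm{\alpha}}_i$ for $\bm{\alpha}^*_i$, $\tilde{\bm{\beta}}_i$ for $\bm{\beta}^*_i$, and $\hat{\bm{\theta}}_i^{(r)}$, $\hat{\bm{\theta}}_i^{(-r)}$ for ${\bm{\theta}_i^{(r)}}^*$, ${\bm{\theta}_i^{(-r)}}^*$). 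Hence the regression vector $\bm{m}_{\hjb_i}^{(r)}(t)$ driving~\eqref{eq:HJB_learningLaw} is precisely the regressor associated with this auxiliary DG, and there exists a true parameter vector $\tilde{\bm{\eta}}_i^{(r)}$ satisfying $\bm{m}_{\hjb_i}^{(r)\top}(t)\,\tilde{\bm{\eta}}_i^{(r)} = z_{\hjb_i}^{(r)}(t)$ for all $t\geq t_{\text{conv}}$.

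With this reduction in place, the argument of Theorem~\ref{theorem:HJBident_online} applies verbatim to the error dynamics $\bm{\eta}_i^{(r,\text{err})}(t)=\hat{\bm{\eta}}_i^{(r)}(t)-\tilde{\bm{\eta}}_i^{(r)}$: under Assumptions~\ref{assm:excitation_HJB_offline} and \ref{assm:excitation_HJBident_online}, the orthogonal-split construction with $\bm{T}_{i_1},\bm{T}_{i_2}$ and the application of~\cite[Theorem~6.1]{Narendra.2005} yield convergence of $\hat{\bm{\eta}}_i^{(r)}$ to a fixed value that fulfills the offline normal equations~\eqref{eq:proof_HJB_ident_offline_1} (formulated now for the auxiliary DG). Consequently, the converged $\hat{\bm{\alpha}}_i, \hat{\bm{\beta}}_i$ lie in the offline solution set~\eqref{eq:HJB_ident_offline_solSet_1} of Lemma~\ref{lemma:HJBident_valueFct_error_offline} for this auxiliary DG, and by that lemma every such parameter combination yields an FNE $\{\hat{\bm{\mu}}^*_i:i\in\mathcal{N}\}$ with $\hat{\bm{\mu}}^*_i(\bm{x})=\hat{\bm{\mu}}^{(\text{online})}_i(\bm{x})$. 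The bound~\eqref{eq:FNEident_online_bounded} then carries over from $\hat{\bm{\mu}}^{(\text{online})}_i$ to $\hat{\bm{\mu}}^*_i$ directly.

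The main obstacle I anticipate is ensuring that the substitution ``treat the converged online-identified quantities as the true values of an auxiliary DG'' is technically clean: one has to argue that the asymptotic behaviour of~\eqref{eq:HJB_learningLaw} is unaffected by the transient during which $\hat{\bm{\mu}}_i$ and $\hat{\bm{\theta}}_i^{(r)}$ have not yet reached their residual set, which is justified by the exponential convergence in Lemma~\ref{lemma:FNEident_valueFct_error_online} combined with boundedness arguments analogous to those in the proof of Theorem~\ref{theorem:HJBident_online}. Once this time-scale separation is in place, the remainder of the proof is a rewriting of the earlier results in the obvious way.
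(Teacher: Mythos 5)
Your proposal is correct and follows essentially the same route as the paper: first use Lemma~\ref{lemma:FNEident_valueFct_error_online} and the stopping criterion to fix $\hat{\bm{\mu}}^{(\text{online})}_i$, then use Assumption~\ref{assm:Q_i_FNEident_online} to set up the auxiliary DG whose coupled HJB equations take the form~\eqref{eq:hjb_linear_1} with a true parameter vector, and finally transfer the convergence argument from the proof of Theorem~\ref{theorem:HJBident_online} and the solution-set characterization from Theorem~\ref{theorem:HJB_ident_offline}. Your explicit remark on the time-scale separation between the FNE-identification transient and the HJB adaptation is a point the paper leaves implicit, but it does not change the argument.
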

\begin{proof}
	According to Lemma~\ref{lemma:FNEident_valueFct_error_online}, the parameter adaption scheme~\eqref{eq:FNElearninglaw} converges for all players $i \in \mathcal{N}$ exponentially fast to the residual set $\norm{\bar{\bm{\theta}}^{(\text{err})}_i} \leq \Theta < \infty, \forall i \in \mathcal{N}$. Due to the stopping criterion~\eqref{eq:stoppingCriterion}, the parameter adaption is stopped with a fixed value $\hat{\bar{\bm{\theta}}}_i$ from this residual set, which yields $\hat{\bm{\mu}}^{(\text{online})}_i(\bm{x})$ with the upper bound~\eqref{eq:FNEident_online_bounded}.
	
	If the combination of the identified control laws $\hat{\bm{\mu}}^{(\text{online})}_i(\bm{x})$ constitute a FNE of a DG as defined in Section~\ref{sec:problem} with cost functions $\tilde{Q}_i(\bm{x}) = \tilde{\bm{\beta}}_i^\top \bm{\psi}_i(\bm{x})$ with unknown parameters~$\tilde{\bm{\beta}}_i$ (cf.~Assumption~\ref{assm:Q_i_FNEident_online}), the solution sets~\eqref{eq:HJB_ident_offline_solSet_1} define the set of all parameters $\hat{\bm{\alpha}}_i, \hat{\bm{\beta}}_i, \forall i \in \mathcal{N}$ that lead to a FNE~$\{\hat{\bm{\mu}}^*_i: i \in \mathcal{N}\}$ with $\hat{\bm{\mu}}_i^*(\bm{x}) = \hat{\bm{\mu}}^{(\text{online})}_i(\bm{x}), \forall i \in \mathcal{N}$ (cf.~Theorem~\ref{theorem:HJB_ident_offline}). Finally, the convergence of the parameter adaption schemes~\eqref{eq:HJB_learningLaw} of all players $i \in \mathcal{N}$ to a fixed element of the sets~\eqref{eq:HJB_ident_offline_solSet_1} follows from the proof of Theorem~\ref{theorem:HJBident_online}.
\end{proof}


\subsection{Approximated Cost Function Structures} \label{subsec:IDGHJB_online_approx_costFct_structures}
As for our new offline IDG method, it is again important to know the cost function structures for a given/identified FNE (see Assumption~\ref{assm:Q_i} and \ref{assm:Q_i_FNEident_online}). 

If an approximation structure is chosen such that $\varepsilon_i(\bm{x})$ is not approximately zero, a bounded perturbation in the error dynamics~\eqref{eq:proof_HJBident_online_1}\comment{error to the offline computed parameters which define, as for the FNE identification, the best possible approximation with the used basis functions~$\bm{\psi}_i$} of the parameter adaption scheme~\eqref{eq:HJB_learningLaw} of the cost function parameters~$\hat{\bm{\eta}}_i^{(r)}$ results. Since the regression vector~$\bm{m}_{\hjb_i}^{(r)}(t)$ is in most cases only PE in a subspace and not in the complete parameter space, exponential stability of the origin of the unperturbed error dynamics cannot be guaranteed. Now, due to the additional bounded perturbation in \eqref{eq:proof_HJBident_online_1} from the bounded error function~$\varepsilon_i(\bm{x})$, it cannot be guaranteed that $\bm{\eta}_i^{(r,\text{err})}$ and thus, $\hat{\bm{\eta}}_i^{(r)}$ is bounded \cite{Jenkins.2018}\comment{remarkably, the learning process can still result in bounded errors but no general guarantees can be given, see numerical example no unstable learning process but very high error bounds when sufficiently excited, see excitation with sum of sine functions}. Hence, in general, the learning process can be unstable. The missing PE of $\bm{m}_{\hjb_i}^{(r)}(t)$ in the complete parameter space follows from the characteristic non-uniqueness of IDG solutions. Full PE and thus, bounded errors~$\bm{\eta}_i^{(r,\text{err})}$ in case of a bounded error functions~$\varepsilon_i(\bm{x})$ can only be guaranteed when the IDG solution is unique up to a scaling factor, i.e. the solution sets~\eqref{eq:HJB_ident_offline_solSet_1} consist of only one element (cf.~Theorem~\ref{theorem:HJB_ident_offline}).




\section{Numerical Example} \label{sec:example}
We define a two-player example system with
\begin{align}
	\bm{G}(\bm{x}) \! &= \!\!\mat{ \bm{G}_1(\bm{x}) \! \! & \!\! \bm{G}_2(\bm{x}) } = \mat{ 0 & 0 \\ \cos(2x_1)\!+\!2 & \sin(4x_1^2)\!+\!2 } \label{eq:example_system_G}, \\
	\bm{f}(\bm{x})\! &= \!\!\mat{ -2x_1+x_2 \\ \!-x_2\!\!-\!\frac{1}{2}x_1\!\!+\!\frac{1}{4}x_2( ( \cos(2x_1)\!+\!2 )^2 \!\!+\! ( \sin(4x_1^2)\!+\!2 )^2 ) } \label{eq:example_system_f}
\end{align}
and the value functions
\begin{align} \label{eq:example_system_VFs}
	V_1^{\bm{\mu}}(\bm{x}) &= \int_{t}^{\infty} 2( x_1^2 + x_2^2 ) + 2( \mu_1^2 + \mu_2^2 ) \text{d}\tau,\\
	V_2^{\bm{\mu}}(\bm{x}) &= \frac{1}{2} V_1^{\bm{\mu}}(\bm{x}).
\end{align}
According to \cite{Karg.2023} and \cite{Nevistic.1996}, the optimal value functions are given by $V_1^*(\bm{x})=\frac{1}{2}x_1^2+x_2^2$ and $V_2^*(\bm{x})=\frac{1}{4}x_1^2+\frac{1}{2}x_2^2$. Thus, we define $\bm{\phi}_1(\bm{x})=\bm{\phi}_2(\bm{x})=\mat{ x_1^2 & x_1x_2 & x_2^2 }^\top$ and $\bm{\psi}_1(\bm{x})=\bm{\psi}_2(\bm{x})=\mat{ x_1^2 & x_1x_2 & x_2^2 }^\top$ and fulfill with $\bm{\theta}_1^* = \mat{\frac{1}{2} & 0 & 1}^\top$, $\bm{\theta}_2^*=\frac{1}{2}\bm{\theta}_1^*$, $\bm{\beta}_1^*=\mat{ 2 & 0 & 2 }^\top$ and $\bm{\beta}_2^*=\frac{1}{2}\bm{\beta}_1^*$ Assumptions~\ref{assm:Q_i} and \ref{assm:V_i_star}. Furthermore, $\bm{\alpha}_1^*=\mat{ 2 & 2 }^\top$ and $\bm{\alpha}_2^*=\frac{1}{2}\bm{\alpha}_1^*$. The GT trajectories $\bm{x}^*(t_0\rightarrow \infty), \bm{u}^*(t_0 \rightarrow \infty)$ result from applying the GT FNE $\bm{\mu}^*(\bm{x})$, which is computed by $V_1^*(\bm{x})$ and $V_2^*(\bm{x})$, to the system defined by \eqref{eq:example_system_f}, \eqref{eq:example_system_G} where the system state is successively reset to the following initial states every $\bigtriangleup t_\text{init} = 2\,\text{s}$ ($j\in\{0,1,\dots,7\}$) starting with $t_0=0\,\text{s}$\footnote{The simulations are performed in the Matlab environment with an Intel Core i7-8550U.}: $\mat{3&\!\!1}^\top$, $\mat{1&\!\!3}^\top$, $\mat{-3&\!\!1}^\top$, $\mat{-1&\!\!3}^\top$, $\mat{3&\!\!-1}^\top$, $\mat{1&\!\!-3}^\top$, $\mat{-3&\!\!-1}^\top$, $\mat{-1&\!\!-3}^\top$.


\subsection{Known Cost and Value Function Structures} \label{subsec:example_known_structures}
\tdd{illustrate result first and second theorem, i.e. main results paper = solutions to first and second problem - keep explanations very short, should all be clear from the theory}

\begin{figure}[t]
	\centering
	\begin{subfigure}[t]{1.65in}
		\includegraphics[width=1.65in]{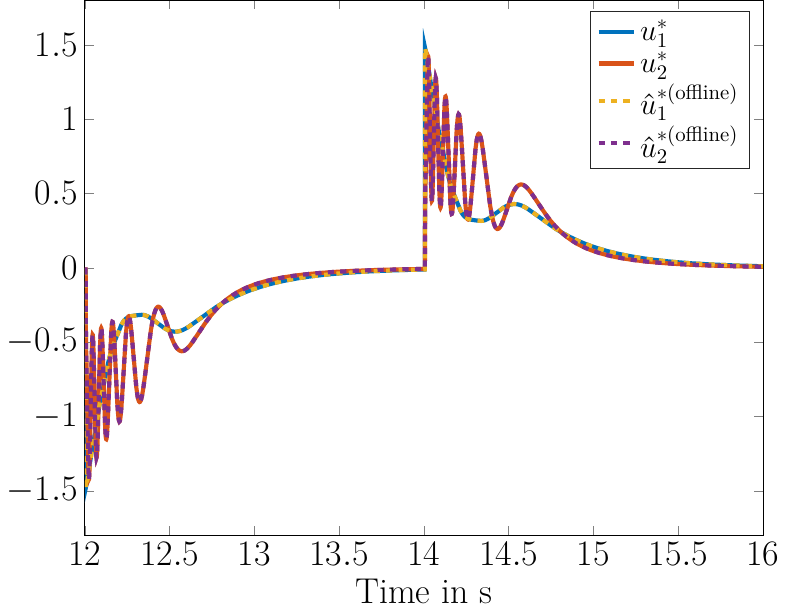}
	\end{subfigure}
	\begin{subfigure}[t]{1.65in}
		\includegraphics[width=1.65in]{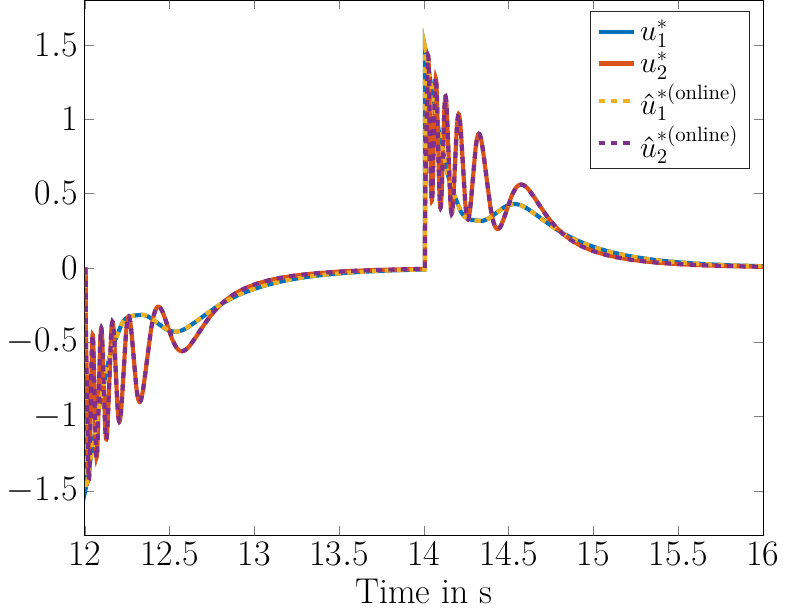}
	\end{subfigure}
	\caption{Comparison of GT trajectories (solid lines) and trajectories (dashed) resulting from the parameters determined by the offline (left) and converged online (right) IDG method for the last two initial state resets.}
	\label{fig:comparisonGTvsIDGErrorFree}
\end{figure}

\begin{figure}[t]
	\centering
	\begin{subfigure}[t]{1.65in}
		\includegraphics[width=1.65in]{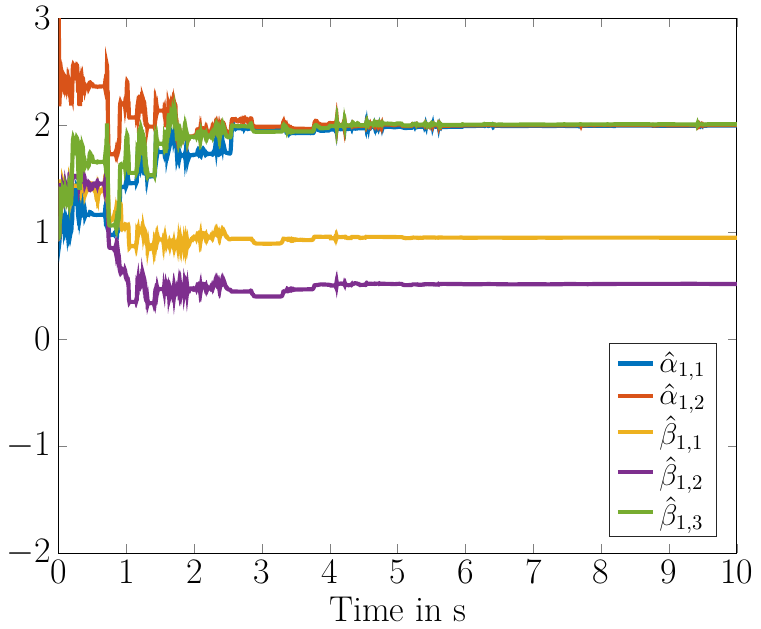}
	\end{subfigure}
	\begin{subfigure}[t]{1.65in}
		\includegraphics[width=1.65in]{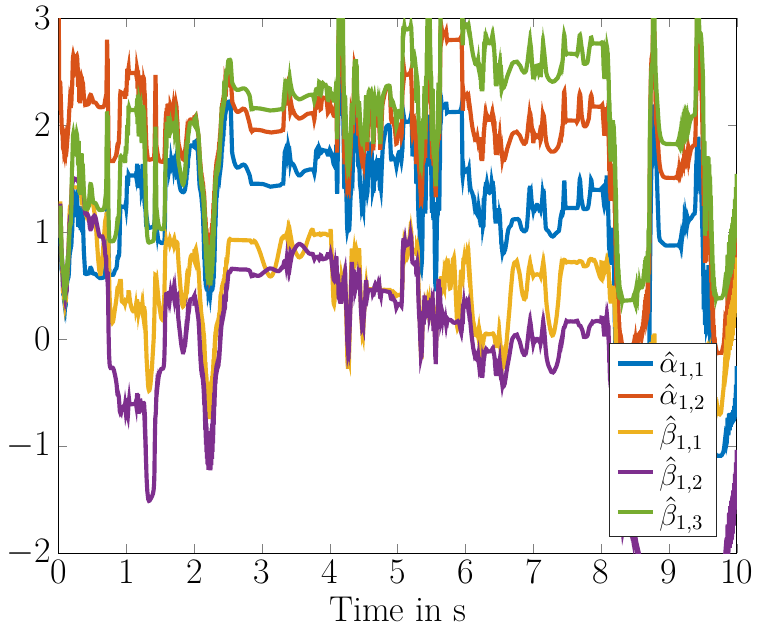}
	\end{subfigure}
	\caption{Parameter trajectories of the online IDG method in case Assumption~\ref{assm:Q_i} is fulfilled (left) and violated (right).}
	\label{fig:learningTrajPlayer1}
\end{figure}

By evaluating the basis functions $\bm{\phi}_1$ and $\bm{\phi}_2$ according to \eqref{eq:mu_i_red}, we define the reduced basis function vectors $\bm{\phi}_1^{(r)}(\bm{x}) = \bm{\phi}_2^{(r)}(\bm{x}) = \mat{ x_1x_2 & x_2^2 }^\top$. Setting up the data matrices $\bm{M}_{u_i}$ and $\bm{z}_{u_i}$ from the GT data with $\bigtriangleup t = 0.001\,\text{s}$ reveals that Assumption~\ref{assm:excitation_FNE_offline} is fulfilled with $\rank(\bm{M}_{u_i})=p_i\bar{h}_i, \forall i \in \{1,2\}$. Hence, the FNE identification according to Lemma~\ref{lemma:FNE_ident_offline} is unique and we can use the solution set~\eqref{eq:HJB_ident_offline_solSet_1} to solve Problem~\ref{problem:offline_idg}. To set up $\bm{M}_{\hjb_i}^{(r)}$ and $\bm{z}_{\hjb_i}^{(r)}, \forall i \in \{1,2\}$ to fulfill Assumption~\ref{assm:excitation_HJB_offline}, we use an equidistant two-dimensional grid around the origin with grid size $1$ and with upper and lower bounds at $10$ and $-10$, respectively.
The unique FNE identification yields $\hat{\bm{\theta}}_1^{(r)} = \hat{\bm{\theta}}_2^{(r)} \approx \mat{0 & 1}^\top$ and we determine one element of the solution set~\eqref{eq:HJB_ident_offline_solSet_1} with $\hat{\bm{\alpha}}_1 = \hat{\bm{\alpha}}_2 \approx \mat{ 2 & 2}^\top$, $\hat{\bm{\beta}}_1 \approx \mat{1.004 & 0.498 & 2}^\top$ and $\hat{\bm{\beta}}_2 \approx \mat{ 1.017 & 0.492 & 2 }^\top$. Here, in \eqref{eq:HJB_ident_offline_solSet_1}, $\left(\bm{I} - \bar{\bm{M}}^{(r)^+}_{\hjb_i} \bar{\bm{M}}^{(r)}_{\hjb_i} \right) \bar{\bm{w}}^{(r)}_i$ reduces to $\bar{w}^{(r)}_i \mat{ 0 & 0 & 0.873 & -0.436 & 0 & 0.218 }^\top, \forall i \in \{1,2\}$ with $\bar{w}^{(r)}_i \in \mathbb{R}$ as arbitrary scalar. For the given estimates, $\bar{w}^{(r)}_1 = 0.714$ and $\bar{w}^{(r)}_2 = 0.729$ hold. Remarkably, the solution set~\eqref{eq:HJB_ident_offline_solSet_1} of Problem~\ref{problem:offline_idg} is now completely defined by every $\bar{w}^{(r)}_i$ that yields $\hat{\bm{\beta}}_i$ such that $Q_i$ is positive definite. Solving the coupled HJB equations with the estimated cost function parameters $\hat{\bm{\alpha}}_i, \hat{\bm{\beta}}_i, \forall i \in \{1,2\}$ by the PI algorithm described in \cite{Karg.2023} leads to the FNE $\hat{\bm{\mu}}^*(\bm{x})$ and the trajectories $\hat{\bm{x}}^*(t_0 \rightarrow \infty)$ and $\hat{\bm{u}}^*(t_0 \rightarrow \infty)$. Fig.~\ref{fig:comparisonGTvsIDGErrorFree} (left subfigure) illustrates that $\hat{\bm{u}}^*(t_0 \rightarrow \infty) \approx \bm{u}^*(t_0 \rightarrow \infty)$ and thus, also $\hat{\bm{x}}^*(t_0 \rightarrow \infty) \approx \bm{x}^*(t_0 \rightarrow \infty)$\comment{unique solution system dynamics}. This can quantitatively verified by defining the normalized sum of absolute errors (NSAE) for the system state~$\delta^x$ and the control signals~$\delta^u$: 
\begin{align} 
	\delta^x = \sum_{j=1}^{n} \frac{1}{\max_{k}\abs{x_j^{*(k)}}} \sum_{k=0}^{K-1} \abs{ \hat{x}_j^{*(k)} - x_j^{*(k)} } \label{eq:nsae_x}, \\
	\delta^u = \sum_{j=1}^{p} \frac{1}{\max_{k}\abs{u_j^{*(k)}}} \sum_{k=0}^{K-1} \abs{ \hat{u}_j^{*(k)} - u_j^{*(k)} } \label{eq:nsae_u},
\end{align}
where the index $k$ denotes the data point at $t_0+k\bigtriangleup t$. With the offline IDG method, $\delta^x \approx 0.010$ and $\delta^u \approx 0.011$ are achieved which shows that Problem~\ref{problem:offline_idg} is solved. Lastly, for completeness, the overall computation of $\hat{\bm{\alpha}}_i, \hat{\bm{\beta}}_i, \forall i \in \{1,2\}$ from the GT data took only $2.22\,\text{s}$.

Regarding our new online IDG method, with the GT trajectories Assumption~\ref{assm:excitation_FNEident_online} is fulfilled. Hence, the online identification of the FNE strategies of both players converges exponentially to the GT ones. Following Remark~\ref{remark:excitation_FNEvsHJB}, we use a sum of sine functions to compute $\bm{m}_{\hjb_i}^{(r)\top}(t)$ and $z_{\hjb_i}^{(r)}(t)$ to fulfill Assumption~\ref{assm:excitation_HJBident_online}. Precisely, we replace $\bm{x}^*(t_0 \rightarrow \infty)$ with $\bm{x}^\top_{\hjb}(t) = \mat{ \sum_{i=1}^{3} 3\sin(2\pi f_i^{(1)} t) & \sum_{i=1}^{3} 3\sin(2\pi f_i^{(2)} t) }$, where $f_i^{(1)}$ and $f_i^{(2)}$ are randomly chosen for every system state reset from a uniform distribution between $0.5\,\text{Hz}$ and $5\,\text{Hz}$. Finally, in Fig.~\ref{fig:learningTrajPlayer1} (left subfigure) the trajectories of the estimated cost function parameters $\hat{\bm{\alpha}}_1, \hat{\bm{\beta}}_1$ are exemplary shown for the first player. For both players, the parameter estimates converge after ca. $6\,\text{s}$ to $\hat{\bm{\alpha}}_1 \approx \mat{ 1.998 & 1.999 }^\top$, $\hat{\bm{\alpha}}_2 \approx \mat{ 2.001 & 1.998 }^\top$, $\hat{\bm{\beta}}_1 \approx \mat{ 0.948 & 0.514 & 2.005 }^\top$ and $\hat{\bm{\beta}}_2 \approx \mat{ 0.949 & 0.516 & 2.001 }^\top$. Thus, we find with $\bar{w}_1^{(r)}=\bar{w}_2^{(r)}\approx0.65$ scalars such that the estimates are part of the solution set \eqref{eq:HJB_ident_offline_solSet_1}. This illustrates Theorem~\ref{theorem:HJBident_online}. Furthermore, the converged parameters solve Problem~\ref{problem:online_idg}, which is qualitatively shown in the right subfigure of Fig.~\ref{fig:comparisonGTvsIDGErrorFree} by the match of GT and estimated trajectories and quantitatively by small NSAE values: $\delta^x \approx 4.678$ and $\delta^u \approx 4.668$. 


\subsection{Approximated Value Function Structures} \label{subsec:example_approx_valueFct_structures}

\comment{DG with chosen basis functions~$\bm{\psi}_i$ (noticeably, unique DG parameters in case where $x_1 x_2$ is dropped, i.e. $x_1 x_2$ introduces one additional DOF) exists for identified FNE, i.e. Assumption~\ref{assm:Q_i_FNEident} and \ref{assm:Q_i_FNEident_online} are fulfilled, if identified FNE results from value function basis functions without $x_1 x_2$ (i.e. second element needs to be identified with coefficient zero or dropped - last approach here) and identified relations $\frac{\hat{\theta}_{12}}{\hat{\alpha}_{11}}$ resp. $\frac{\hat{\theta}_{22}}{\hat{\alpha}_{22}}$ is $\approx \frac{1}{2}$, design VF error accordingly (needs to be checked with $\hat{\alpha}_{11}$ resp. $\hat{\alpha}_{22}$, since due to the scaling ambiguity $\hat{\theta}_{12}$ and $\hat{\theta}_{22}$ are set to one always); otherwise, it would be possible based on Remark~\ref{remark:choice_basisFunction_costFunction} to introduce new basis functions: $x_2^2(\cos(2x_1)+2)^2$ in $\bm{\psi}_1$ and $x_2^2(\sin(4x_1^2)+2)^2$ in $\bm{\psi}_2$}

\begin{figure}[t]
	\centering
	\begin{subfigure}[t]{1.65in}
		\includegraphics[width=1.65in]{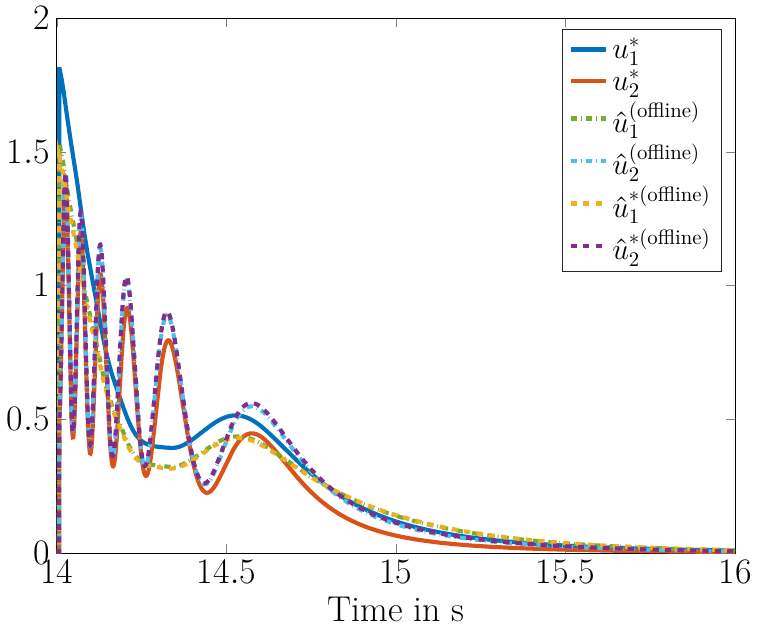}
	\end{subfigure}
	\begin{subfigure}[t]{1.65in}
		\includegraphics[width=1.65in]{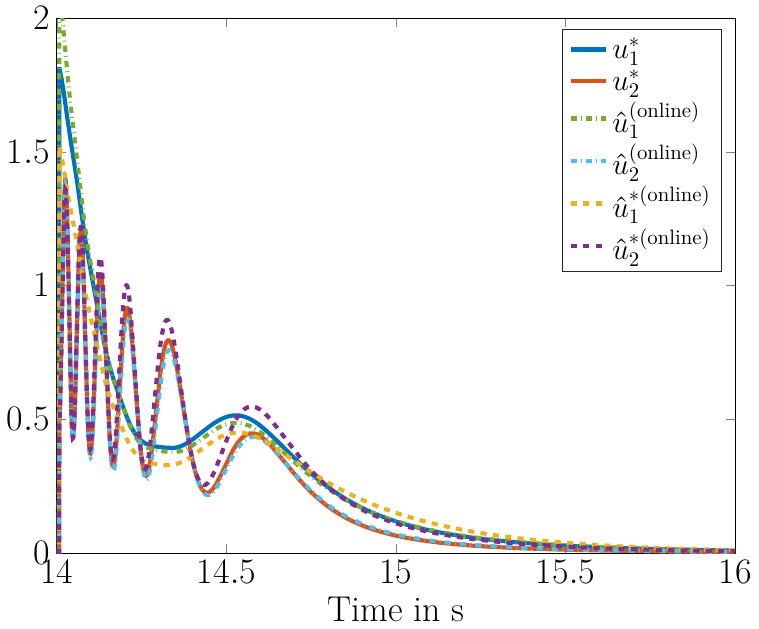}
	\end{subfigure}
	\caption{Comparison of GT trajectories (solid lines), trajectories (dash-dotted) resulting from the FNE identification and trajectories (dashed) resulting from parameters determined by the IDG method for the last initial state reset. Left subfigure shows result of offline method and right one for converged online approach where Assumption~\ref{assm:V_i_star} is violated.}
	\label{fig:comparisonGTvsFNEvsIDGVFerror}
\end{figure}

\begin{figure}[t]
	\centering
	\begin{subfigure}[t]{1.65in}
		\includegraphics[width=1.65in]{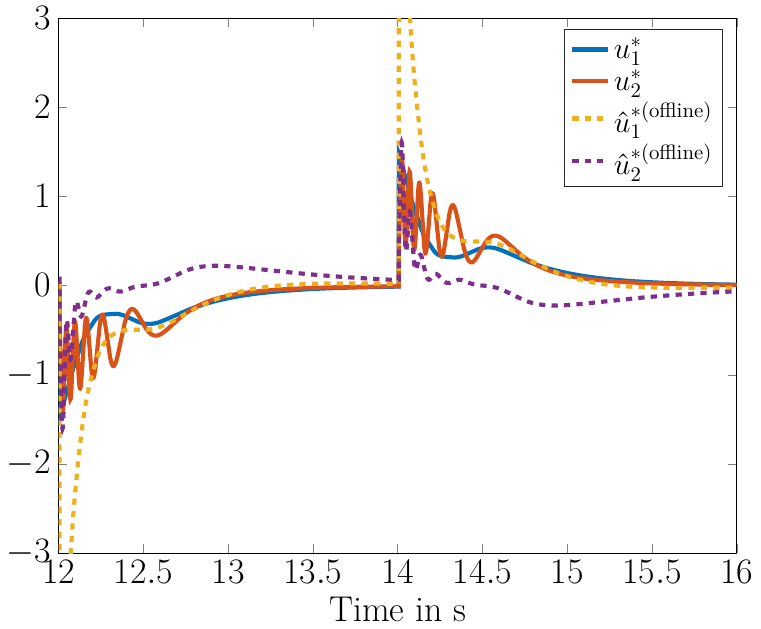}
	\end{subfigure}
	\begin{subfigure}[t]{1.65in}
		\includegraphics[width=1.65in]{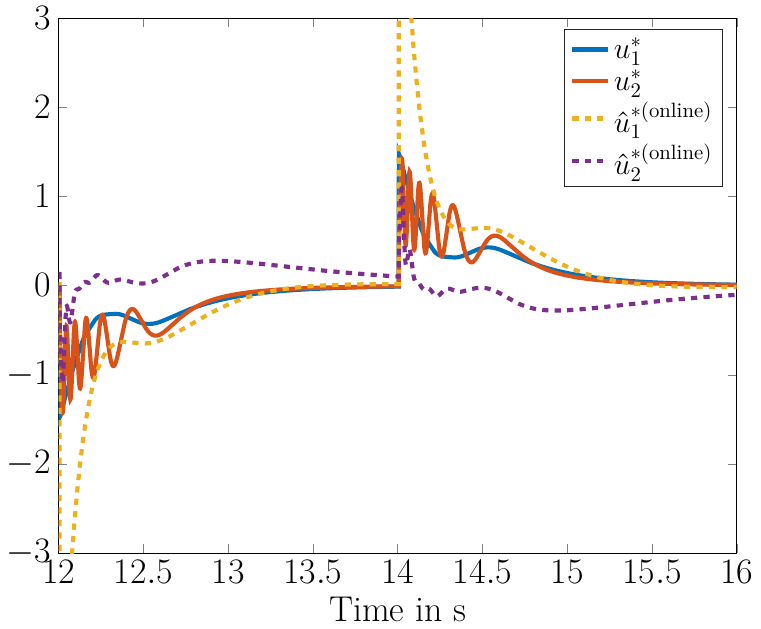}
	\end{subfigure}
	\caption{Comparison of GT trajectories (solid lines) and trajectories (dashed) resulting from the parameters determined by the offline (left) and online (right) IDG method for the last two initial state resets where Assumption~\ref{assm:Q_i} is violated.}
	\label{fig:comparisonGTvsIDGQerror}
\end{figure}

We simulate the case of not fulfilled Assumption~\ref{assm:V_i_star} by computing the GT FNE strategies $\mu_1^*(\bm{x})$ and $\mu_2^*(\bm{x})$ with $V_1^*(\bm{x}) = \bm{\theta}_1^{*\top}\bm{\phi}_1(\bm{x}) + \frac{1}{2}\sin(x_2^2)$ and $V_2^*(\bm{x})=\bm{\theta}_2^{*\top}\bm{\phi}_2(\bm{x})$, respectively.
Regarding the offline IDG method, according to Lemma~\ref{lemma:FNEident_valueFct_error_offline}, a bounded error between the identified control laws~$\hat{\mu}_i(\bm{x}), \forall i \in \{1,2\}$ and the FNE strategies $\mu^*_i(\bm{x})$ is guaranteed. When using $\bm{\phi}_1(\bm{x})=\bm{\phi}_2(\bm{x})=\mat{ x_1^2 & x_2^2 }^\top$ as basis functions in the FNE identification step, it can be verified that Assumption~\ref{assm:Q_i_FNEident} is fulfilled with the identified parameters $\hat{\bar{\theta}}_1 = \hat{\bar{\theta}}_2 \approx \frac{1}{2}$ since parameters $\bm{\beta}_i,\forall i \in \{1,2\}$ exist such that the coupled HJB equations can be fulfilled $\forall \bm{x} \in \mathcal{X}$. Thus, finally, the estimated parameters $\hat{\bm{\alpha}}_1 = \hat{\bm{\alpha}}_2 \approx \mat{1.998 & 2.000}^\top$ and $\hat{\bm{\beta}}_1 = \hat{\bm{\beta}}_2 \approx \mat{ 1.004 & 0.498 & 2.000 }^\top$ yield a FNE~$\hat{\bm{\mu}}^*(\bm{x})$ matching the identified control laws, i.e. $\hat{\mu}^*_i(\bm{x}) = \hat{\mu}_i(\bm{x}), \forall i \in \{1,2\}$. Fig.~\ref{fig:comparisonGTvsFNEvsIDGVFerror} (left subfigure) illustrates this: The trajectories $\hat{u}^*_i(t_0\rightarrow\infty), \forall i \in \{1,2\}$ resulting from the IDG solution coincide with the trajectories $\hat{u}_i(t_0\rightarrow\infty), \forall i \in \{1,2\}$ resulting from the identified FNE strategies (cf.~Lemma~\ref{lemma:HJBident_valueFct_error_offline}). Both trajectories show bounded errors to the GT trajectories $u^*_i(t_0\rightarrow\infty), \forall i \in \{1,2\}$. Quantitatively, this can be verified by similar NSAE values $\delta^x \approx 404.8$ and $\delta^u \approx 920.3$ for $\hat{\bm{x}}(t_0\rightarrow\infty), \hat{\bm{u}}(t_0\rightarrow\infty)$ and $\delta^x \approx 453.1$ and $\delta^u \approx 961.4$ for $\hat{\bm{x}}^*(t_0\rightarrow\infty), \hat{\bm{u}}^*(t_0\rightarrow\infty)$, which are however comprehensibly higher than in the error-free case (see Subsection~\ref{subsec:example_known_structures}). The overall computation time of the IDG method was $4.62\,\text{s}$.

Due to the bounded error function in $V_1^*$, only convergence to a residual set of the online identification of $\mu_1^*(\bm{x})$ occurs (cf.~Lemma~\ref{lemma:FNEident_valueFct_error_online}). Thus, after terminating the parameter adaption~\eqref{eq:FNElearninglaw} of $\hat{\bar{\theta}}_1$ when convergence to this residual set occurs (see stopping criterion~\eqref{eq:stoppingCriterion}), we get $\hat{\bar{\theta}}_1 \approx 0.7$, which differs from the offline result. The parameter adaption of $\hat{\bar{\theta}}_2$ still converges to the unique optimal value $\hat{\bar{\theta}}_2 \approx 0.5$ (no error function in $V_2^*$). Now, the fixed values $\hat{\bar{\theta}}_i,\forall i \in \{1,2\}$ are used to define the online identified control laws~$\hat{\mu}^{(\text{online})}_i(\bm{x}),\forall i \in \{1,2\}$ that yield higher NSAE values to the GT trajectories than the offline identified ones (cf.~Remark~\ref{remark:greaterError_onlineFNEident}): $\delta^x \approx 631.2$ and $\delta^u \approx 1321.8$. By analyzing the HJB equations, we observe that they cannot be solved $\forall \bm{x} \in \mathcal{X}$ if $\hat{\bar{\theta}}_1 \approx 0.7$. Hence, Assumption~\ref{assm:Q_i_FNEident_online} is not fulfilled in this case and we cannot apply Lemma~\ref{lemma:HJBident_valueFct_error_online} to conclude $\hat{\mu}^*_i(\bm{x})=\hat{\mu}^{(\text{online})}_i(\bm{x}), \forall i \in \{1,2\}$ and thus, $\hat{u}^*_i(t_0\rightarrow\infty)=\hat{u}^{(\text{online})}_i(t_0\rightarrow\infty), \forall i \in \{1,2\}$ for the online IDG solution. However, the FNE resulting from the offline IDG solution still is the FNE yielding the smallest possible HJB error\comment{comparable result to MP-based IDG method, basis functions $\bm{\psi}_i$ here "best possible" ones resp. sufficiently good}. The online estimated parameters $\hat{\bm{\alpha}}_1 = \hat{\bm{\alpha}}_2 \approx \mat{1.856 & 1.998}^\top$ and $\hat{\bm{\beta}}_1 = \hat{\bm{\beta}}_2 \approx \mat{ 0.967 & 0.555 & 1.939 }^\top$ lead to a FNE approximately matching the result of the offline IDG method. This is qualitatively shown in Fig.~\ref{fig:comparisonGTvsFNEvsIDGVFerror} by $\hat{u}^{*(\text{online})}_i(t_0\rightarrow\infty)\neq\hat{u}^{(\text{online})}_i(t_0\rightarrow\infty), \forall i \in \{1,2\}$ but $\hat{u}^{*(\text{online})}_i(t_0\rightarrow\infty)\approx\hat{u}^{*(\text{offline})}_i(t_0\rightarrow\infty), \forall i \in \{1,2\}$ and quantitatively by similar NSAE values $\delta^x \approx 477.6$ and $\delta^u \approx 961.5$ for $\hat{\bm{x}}^{*(\text{online})}(t_0\rightarrow\infty), \hat{\bm{u}}^{*(\text{online})}(t_0\rightarrow\infty)$ as for $\hat{\bm{x}}^{*(\text{offline})}(t_0\rightarrow\infty), \hat{\bm{u}}^{*(\text{offline})}(t_0\rightarrow\infty)$.


\subsection{Approximated Cost Function Structures} \label{subsec:example_approx_costFct_structures}
\tdd{(only) illustrate statements regarding error in cost function structure approximation, influence on all players possible, crucial to know the cost function structure for the assumed value function structure resp. given/identified FNE - offline and online case - keep explanations very short, should all be clear from the theory}

\comment{here, we only compute with the chosen basis functions~$\bm{\phi}_i$ an approximation of the FNE~$\{\hat{\bm{\mu}}_i^*: i \in \mathcal{N}\}$; hence, the part of Lemma~\ref{lemma:costFct_error_offline} that assumes that the value functions~$\hat{V}_i^*$ corresponding to the FNE~$\{\hat{\bm{\mu}}_i^*: i \in \mathcal{N}\}$ can be represented by linear combinations of $\bm{\phi}_i$ is not fulfilled; however, the remaining parts hold as given, especially note that for the conclusion that $\{\bm{\mu}_i^*: i \in \mathcal{N}\}$ is not a fix point of the PI iteration and thus, not the searched FNE, the LE with $\{\bm{\mu}_i^*: i \in \mathcal{N}\}$ has not to be solved exactly}

We investigate the case of cost function approximation errors, i.e. not fulfilled Assumption~\ref{assm:Q_i} (or Assumptions~\ref{assm:Q_i_FNEident} and \ref{assm:Q_i_FNEident_online}) by using $Q_1(\bm{x}) = \bm{\beta}_1^\top\bm{\psi}_1(\bm{x}) + \frac{1}{5}x_1^4$ to set up the HJB equation~\eqref{eq:hjb_linear_1} for the identification of the cost function parameters $\bm{\alpha}^*_1,\bm{\beta}^*_1$ of player 1. Noticeably, the FNE identification is not influenced by this and still uniquely yields the GT FNE.
Regarding the offline approach, the HJB equation of player 1 cannot be solved exactly anymore and an error according to \eqref{eq:hjb_error} remains. This leads to a bias in the estimated parameters of player 1: $\hat{\bm{\alpha}}_1 \approx \mat{ 1.415 & 3.592 }^\top$ and $\hat{\bm{\beta}}_1 \approx \mat{ -3.568 & -5.509 & 3.301 }^\top$. Now, in line with Lemma~\ref{lemma:costFct_error_offline}, the FNE $\hat{\bm{\mu}}^*$ corresponding to the estimated parameters differs from the GT FNE. This is illustrated in Fig.~\ref{fig:comparisonGTvsIDGQerror} (left subfigure) where the estimated trajectories do not even show similar trends to the GT ones and by high NSAE values: $\delta^x \approx 1810.7$ and $\delta^u \approx 2079.7$. The overall computation time was $1.88\,\text{s}$.

Due to the missing PE of the regression vector~$\bm{m}_{\hjb_1}(t)$ the boundedness of the estimated parameters $\hat{\bm{\alpha}}_1, \hat{\bm{\beta}}_1$ cannot be guaranteed theoretically. Although we observe in the right subfigure of Fig.~\ref{fig:learningTrajPlayer1} bounded learning trajectories, they oscillate in a broad range where not all parameter values even guarantee solvability of the DG. For example, near $t=10\,\text{s}$ negative values for $\hat{\alpha}_{1,1}$ and $\hat{\alpha}_{1,2}$ occur and a negative definite matrix $\hat{\bm{R}}_{11}$ would result. By choosing the estimated parameters $\hat{\bm{\alpha}}_1 \approx \mat{ 0.875 & 1.509 }^\top$ and $\hat{\bm{\beta}}_1 \approx \mat{ -1.572 & -3.837 & 1.825 }^\top$ from the residual set at $t=8.957\,\text{s}$, the DG is solvable and with the resulting FNE comparable errors to the GT trajectories as in the offline case follow (cf. right subfigure in Fig.~\ref{fig:comparisonGTvsIDGQerror}): $\delta^x \approx 1907.0$ and $\delta^u \approx 2686.8$.

\section{Conclusion} \label{sec:conclusion}
In this work, we propose new offline and online IDG methods for nonlinear systems based on the coupled HJB equations. With the offline method, we are able to compute the most general solution set to analyze the non-uniqueness of IDG solutions, i.e. the set of all equivalent cost function parameters that yield the GT trajectories. Furthermore, the online approach is the first online learning IDG method for nonlinear systems and it is guaranteed to converge to one element of the offline solution set. Finally, the separate analysis of erroneous approximations of the cost and value functions, i.e. that assumed cost or value function approximation structures lead to remaining errors to the true functions, reveals that it is crucial to know the cost function structures for assumed value function structures such that fulfillment of the HJB equations is guaranteed. Our new IDG methods build a theoretical framework for various extensions in future work, e.g. model-free versions of our algorithms or their application to tracking or shared control problems.


\bibliographystyle{IEEEtran}

\bibliography{References3}

\begin{IEEEbiography}[{\includegraphics[width=1in,height=1.25in,clip,keepaspectratio]{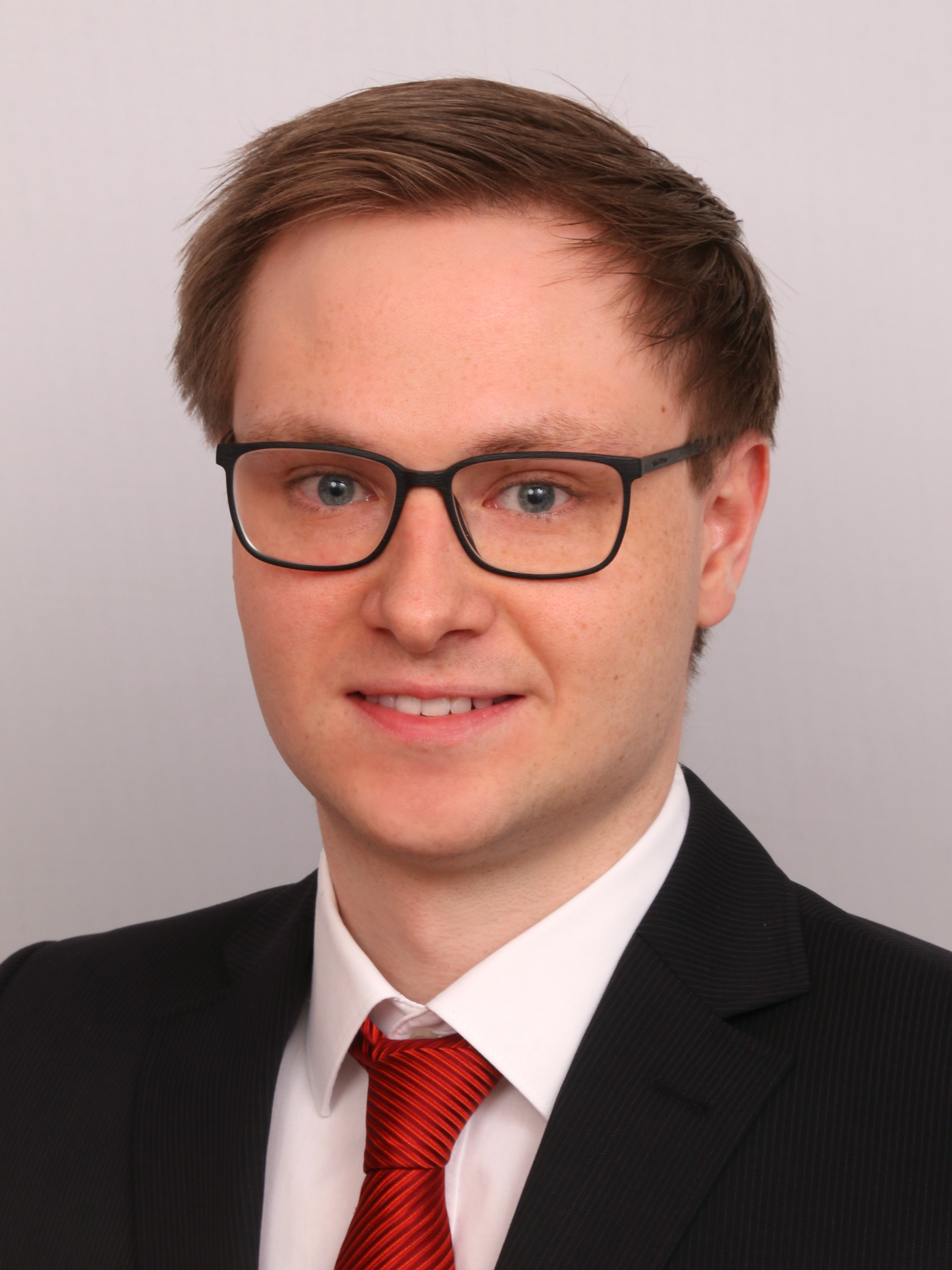}}]{Philipp Karg}
 	studied Electrical Engineering and Information Technologies at the Karlsruhe Institute of Technology (KIT), Karlsruhe, Germany, where he received the B.Sc. degree in 2016 and the M.Sc. degree in 2019. Since 2019, he is a Ph.D. candidate at the Institute of Control Systems (IRS) at the KIT. His current research interests include Inverse Optimal Control and Inverse Reinforcement Learning in single- and multi-agent settings.
\end{IEEEbiography}

\begin{IEEEbiography}[{\includegraphics[width=1in,height=1.25in,clip,keepaspectratio]{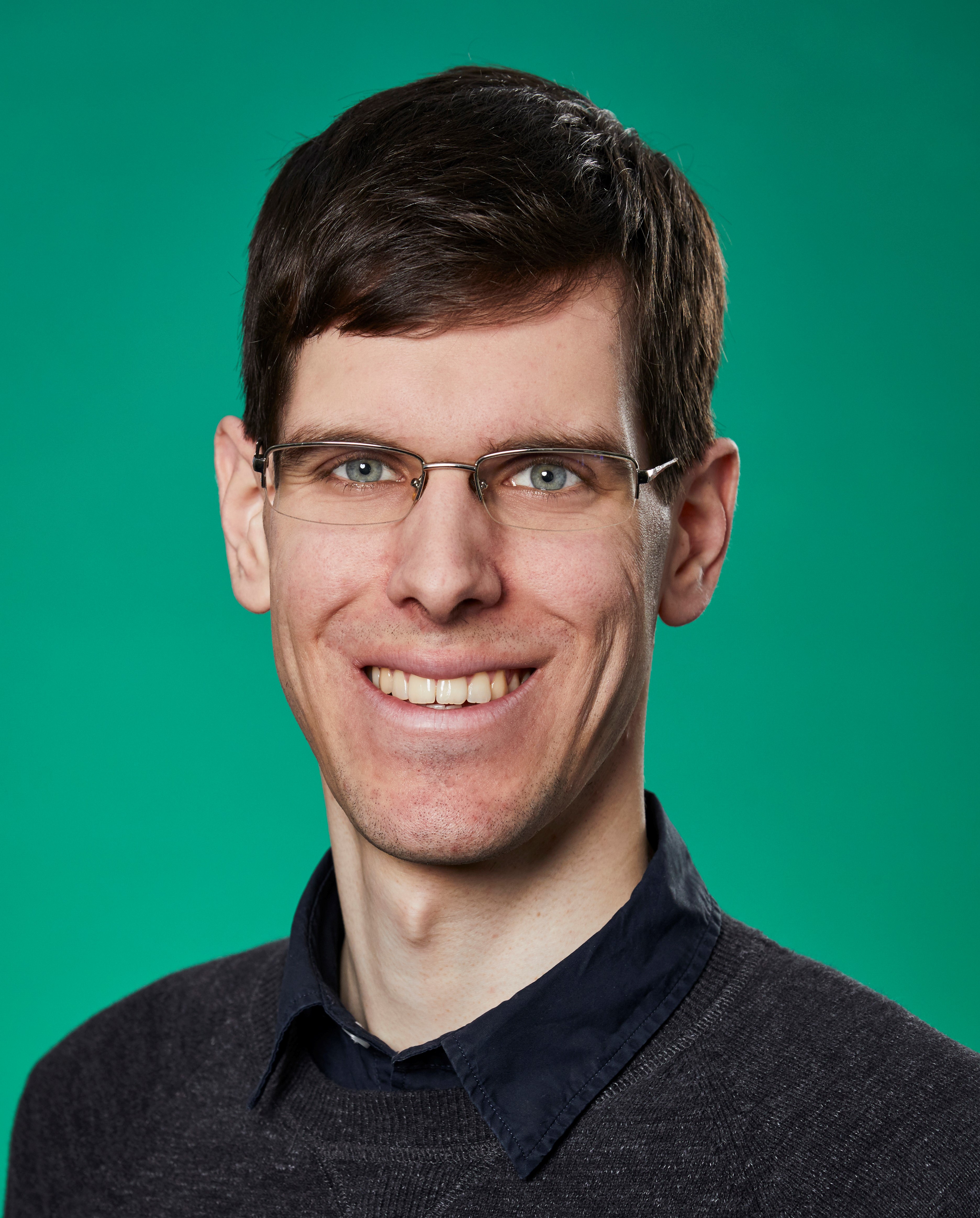}}]{Balint Varga}
	received his B.Sc. in Mechatronics from the Technical University of Budapest, Hungary, in 2016. He earned an M.Sc. in Mechanical Engineering in 2017 and a Dr.-Ing. in Electrical Engineering and Information Technologies in 2023, both from the KIT, Germany. 
	In 2023, he became the head of the research group on Cooperative Systems. His research interests include modeling human-machine interaction and designing shared-control concepts for various applications. He is a member of the IEEE SMC Society and leads the Technical Committee on Shared Control. He serves as an Associate Editor for the IEEE Transactions on Human-Machine Systems and the International Journal of Robotics and Automation.
\end{IEEEbiography}

\begin{IEEEbiography}[{\includegraphics[width=1in,height=1.25in,clip,keepaspectratio]{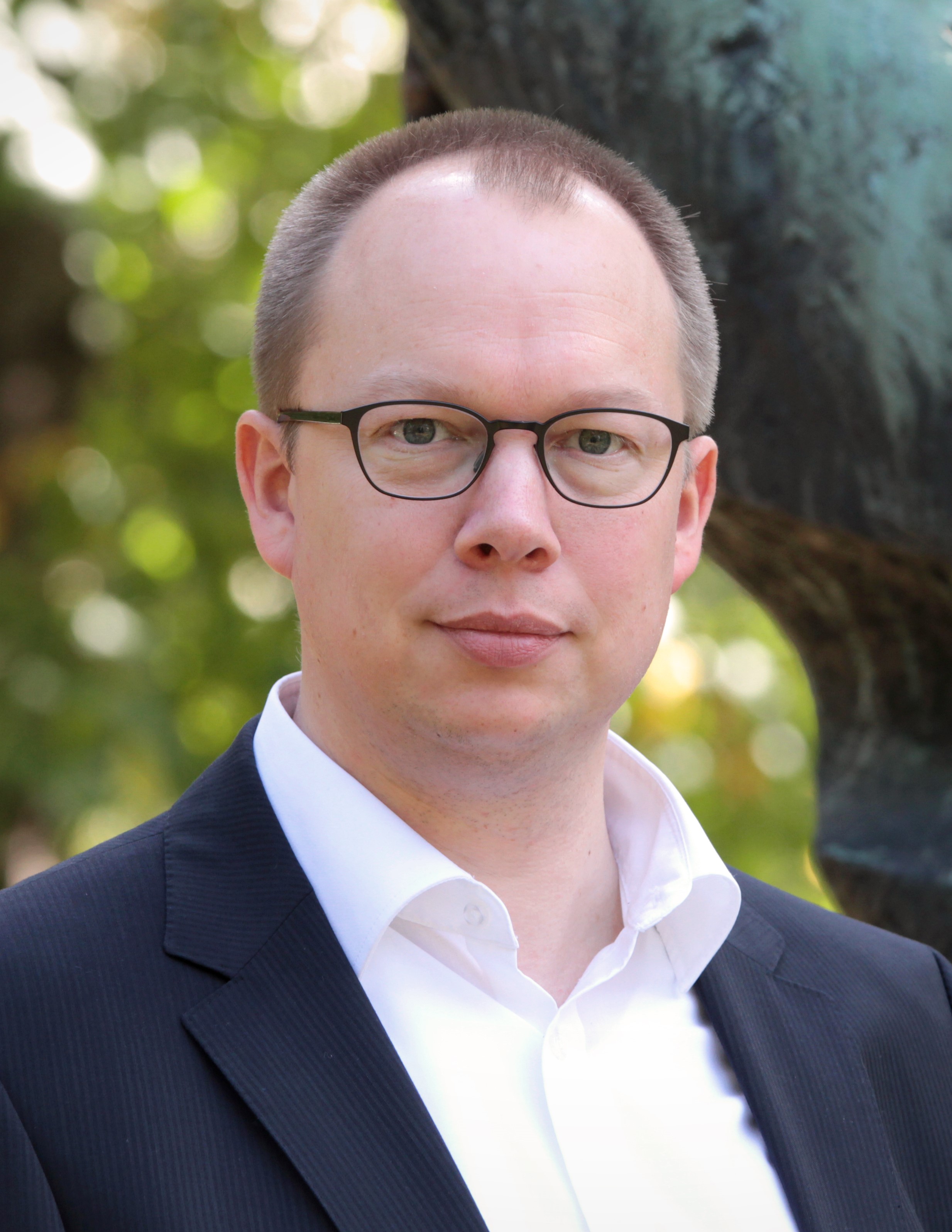}}]{Sören Hohmann}
	studied Electrical Engineering at the Technische Universität Braunschweig, University of Karlsruhe and école nationale supérieure d’électricité et de mécanique Nancy. He received his diploma degree and Ph.D. degree at the University of Karlsruhe. Afterwards, he worked for BMW in different executive positions. Since 2010, he is the head of the IRS at the KIT. In addition, he is a director of the FZI Research Center for Information Technology. His research interests are cooperative control systems, control of energy systems and functional safety control. He is senior member of the IEEE and chair of the IFAC technical committee on human-machine systems.
\end{IEEEbiography}

\end{document}